\title[Categorical large cardinals]{Categorical large cardinals and the tension between categoricity and set-theoretic reflection}
\author{Joel David Hamkins}
\address[Joel David Hamkins]
{O'Hara Professor of Philosophy and Mathematics, University of Notre Dame, 100 Malloy Hall, Notre Dame, IN 46556 USA}
\email{jdhamkins@nd.edu}
\urladdr{http://jdh.hamkins.org}
\author{Hans Robin Solberg}
\address[Hans Robin Solberg]
{Doctoral student, Philosophy, University of Oxford}
\email{robin.solberg@philosophy.ox.ac.uk}
\urladdr{http://users.ox.ac.uk/~sedm5950/index.html}
\thanks{Commentary can be made about this article on the first author's blog at \href{http://jdh.hamkins.org/categorical-large-cardinals}{http://jdh.hamkins.org/categorical-large-cardinals}.}
\newtheorem{theorem}{Theorem}
\newtheorem*{theorem*}{Theorem}
\newtheorem*{maintheorem*}{Main Theorem}
\newtheorem*{maintheorems*}{Main Theorems}
\newtheorem*{corollary*}{Corollary}
\newtheorem*{corollaries*}{Corollaries}
\newtheorem{observation}[theorem]{Observation}
\theoremstyle{definition}
\newtheorem*{definition*}{Definition}
\newtheorem{maindefinition}[theorem]{Main Definition}
\newtheorem{question}[theorem]{Question}
\newtheorem*{question*}{Question}
\newtheorem*{questions*}{Questions}
\newtheorem{mainquestion}[theorem]{Main Question} 
\newtheorem*{mainquestion*}{Main Question} 
\newtheorem*{openquestion*}{Open Question} 
\theoremstyle{remark}
\newcommand{\QED}{\end{proof}}
\def\proclaim[#1]{{\bf #1}}
\def\BF#1.{{\bf #1.}}
\def\says#1:#2\par{\item[#1] #2\par}
\newcommand{\Godel}{G\"odel}
\newcommand{\Levy}{L\'{e}vy}
\newcommand{\Lowenheim}{L\"owenheim}
\newcommand{\C}{{\mathbb C}}
\newcommand{\N}{{\mathbb N}}
\newcommand{\R}{{\mathbb R}}
\newcommand{\continuum}{\mathfrak{c}}
\newcommand{\dotminus}{\mathbin{\text{\@dotminus}}}
\newcommand{\@dotminus}{%
  \ooalign{\hidewidth\raise1ex\hbox{.}\hidewidth\cr$\m@th-$\cr}%
}
\newcommand{\of}{\subseteq}
\newcommand{\elesub}{\prec}
\newcommand{\Add}{\mathop{\rm Add}}
\newcommand{\Coll}{\mathop{\rm Coll}}
\newcommand{\Th}{\mathop{\rm Th}}
\newcommand{\Con}{\mathop{{\rm Con}}}
\newcommand{\satisfies}{\models}
\newcommand{\proves}{\vdash}
\renewcommand{\setminus}{\raise.3ex\hbox{\rotatebox{-20}{$-$}}} 
\newcommand{\Union}{\bigcup}
\newcommand{\smalllt}{\mathrel{\mathchoice{\raise2pt\hbox{$\scriptstyle<$}}{\raise1pt\hbox{$\scriptstyle<$}}{\raise0pt\hbox{$\scriptscriptstyle<$}}{\scriptscriptstyle<}}}
\newcommand{\smallleq}{\mathrel{\mathchoice{\raise2pt\hbox{$\scriptstyle\leq$}}{\raise1pt\hbox{$\scriptstyle\leq$}}{\raise1pt\hbox{$\scriptscriptstyle\leq$}}{\scriptscriptstyle\leq}}}
   \def\DHLhksqrt#1#2{%
   \setbox0=\hbox{$#1\sqrt{#2\,}$}\dimen0=\ht0
   \advance\dimen0-0.2\ht0
   \setbox2=\hbox{\vrule height\ht0 depth -\dimen0}%
   {\box0\lower0.4pt\box2}}
\newcommand{\boolval}[1]{\mathopen{\lbrack\!\lbrack}\,#1\,\mathclose{\rbrack\!\rbrack}}
\def\[#1]{\boolval{#1}}
\newbox\gnBoxA
\newbox\gnBoxB
\newdimen\gnCornerHgt
\newdimen\gnArgHgt
\def\gcode #1{%
\setbox\gnBoxA=\hbox{$#1$}%
\setbox\gnBoxB=\hbox{$\bar #1$}%
\gnArgHgt=\ht\gnBoxB%
\ifnum     \gnArgHgt<\gnCornerHgt \gnArgHgt=0pt%
\else \advance \gnArgHgt by -\gnCornerHgt%
\fi \raise\gnArgHgt\hbox{\tiny$\ulcorner$} \box\gnBoxA %
\raise\gnArgHgt\hbox{\tiny$\urcorner$}}
\newcommand{\UnderTilde}[1]{{\setbox1=\hbox{$#1$}\baselineskip=0pt\vtop{\hbox{$#1$}\hbox to\wd1{\hfil$\sim$\hfil}}}{}}
\newcommand{\Undertilde}[1]{{\setbox1=\hbox{$#1$}\baselineskip=0pt\vtop{\hbox{$#1$}\hbox to\wd1{\hfil$\scriptstyle\sim$\hfil}}}{}}
\newcommand{\undertilde}[1]{{\setbox1=\hbox{$#1$}\baselineskip=0pt\vtop{\hbox{$#1$}\hbox to\wd1{\hfil$\scriptscriptstyle\sim$\hfil}}}{}}
\newcommand{\UnderdTilde}[1]{{\setbox1=\hbox{$#1$}\baselineskip=0pt\vtop{\hbox{$#1$}\hbox to\wd1{\hfil$\approx$\hfil}}}{}}
\newcommand{\Underdtilde}[1]{{\setbox1=\hbox{$#1$}\baselineskip=0pt\vtop{\hbox{$#1$}\hbox to\wd1{\hfil\scriptsize$\approx$\hfil}}}{}}
\renewcommand{\implies}{\mathrel{\rightarrow}}
\renewcommand{\iff}{\mathrel{\leftrightarrow}}
\newcommand{\iso}{\cong}
\def\<#1>{\left\langle#1\right\rangle}
\newcommand{\Ord}{\mathord{{\rm Ord}}}
\newcommand{\ZFC}{{\rm ZFC}}
\newcommand{\ZF}{{\rm ZF}}
\newcommand{\CH}{{\rm CH}}
\newcommand{\GCH}{{\rm GCH}}
\newcommand{\PA}{{\rm PA}}
\newcommand{\cell}[1]{\boxit{\hbox to 17pt{\strut\hfil$#1$\hfil}}}
\newcommand{\head}[2]{\lower2pt\vbox{\hbox{\strut\footnotesize\it\hskip3pt#2}\boxit{\cell#1}}}
\newcommand{\boxit}[1]{\setbox4=\hbox{\kern2pt#1\kern2pt}\hbox{\vrule\vbox{\hrule\kern2pt\box4\kern2pt\hrule}\vrule}}
\newcommand{\Col}[3]{\hbox{\vbox{\baselineskip=0pt\parskip=0pt\cell#1\cell#2\cell#3}}}
\newcommand{\tapenames}{\raise 5pt\vbox to .7in{\hbox to .8in{\it\hfill input: \strut}\vfill\hbox to
.8in{\it\hfill scratch: \strut}\vfill\hbox to .8in{\it\hfill output: \strut}}}
\newcommand{\Head}[4]{\lower2pt\vbox{\hbox to25pt{\strut\footnotesize\it\hfill#4\hfill}\boxit{\Col#1#2#3}}}
\newcommand{\Dots}{\raise 5pt\vbox to .7in{\hbox{\ $\cdots$\strut}\vfill\hbox{\ $\cdots$\strut}\vfill\hbox{\
$\cdots$\strut}}}
\renewcommand{\UrlFont}{} 
\addcolon\nolinkurl{#1}}\iffieldundef{eprintclass}{}{\UrlFont{\mkbibbrackets{\thefield{eprintclass}}}}}
\addcolon\nolinkurl{#1}\iffieldundef{eprintclass}{}{\UrlFont{\mkbibbrackets{\thefield{eprintclass}}}}}}
\renewcommand\emptyset{\varnothing}
\newcommand{\leqequiv}{\mathrel{\text{\tikz[scale=1.3ex/1cm,baseline=-.9ex,line width=.1ex]{\draw (0,0) -- (1,.4) (0,0) -- (1,-.4) (0,-.3) -- (1,-.7) (0,-.6) -- (1,-1);}}}}
\newcommand{\Plus}{{\mathop{\text{\tikz[scale=.8ex/1cm,baseline=0ex,line width=.33ex]{\draw (-1,0) -- (1,0) (0,-1) -- (0,1);}}}}}
\begin{document}

\begin{abstract}
Inspired by Zermelo's quasi-categoricity result characterizing the models of second-order Zermelo-Fraenkel set theory $\ZFC_2$, we investigate when those models are fully categorical, characterized by the addition to $\ZFC_2$ either of a first-order sentence, a first-order theory, a second-order sentence or a second-order theory. Thus we mount an analysis of the \emph{categorical} large cardinals. This mathematical analysis leads naturally to philosophical issues concerning structuralism and realism, including especially the tension between categoricity and reflection. Ultimately we identify grounds for the preference of noncategoricity in one's foundations.
\end{abstract}

\maketitle

Categorical accounts of various mathematical structures lie at the very core of structuralist mathematical practice, enabling mathematicians to refer to specific mathematical structures, not by having carefully to prepare and point at specially constructed instances---preserved like the one-meter platinum bar locked in a case in Paris---but instead merely by mentioning features that uniquely characterize the structure up to isomorphism.

The natural numbers $\<\N,0,S>$, for example, are uniquely characterized by the Dedekind axioms, which assert that $0$ is not a successor, that the successor function $S$ is one-to-one, and that every set containing $0$ and closed under successor contains every number \cite{Dedekind1888:What-are-numbers-and-what-should-they-be,Dedekind:Essays-on-the-theory-of-numbers}. We know what we mean by the natural numbers---they have a definiteness---because we can describe features that completely determine the natural number structure. The real numbers $\<\R,+,\cdot,0,1>$ similarly are characterized up to isomorphism as the unique complete ordered field \cite{Huntington1903:Complete-sets-of-postulates-for-the-theory-of-real-quantities}. The complex numbers $\<\C,+,\cdot>$ form the unique algebraically closed field of characteristic $0$ and size continuum, or alternatively, the unique algebraic closure of the real numbers. Essentially all our fundamental mathematical structures enjoy such categorical characterizations, where a theory is \emph{categorical} if it identifies a unique mathematical structure up to isomorphism---any two models of the theory are isomorphic. In light of the \Lowenheim-Skolem theorem, which prevents categoricity for infinite structures in first-order logic, these categorical theories are generally made in second-order logic.\goodbreak

\newpage
In set theory, Zermelo characterized the models of second-order Zermelo-Fraenkel set theory $\ZFC_2$ in his famous quasi-categoricity result:

\begin{theorem}[Zermelo \cite{Zermelo1930:UberGrenzzahlenUndMengenBereiche}\footnote{Zermelo was more generally concerned with the urelement-based versions of set theory, and what he proved in \cite{Zermelo1930:UberGrenzzahlenUndMengenBereiche} is that the models of second-order $\ZF-$infinity with urelements are determined by two cardinals---their inaccessible cardinal height (allowing $\omega$ when the infinity axiom fails) and the number of urelements. In this article, we shall focus on the case of $\ZFC_2$, where there are no urelements and the axiom of infinity holds.}]
\label{Theorem.Zermelo-quasi-categoricity}
The models of $\ZFC_2$ are precisely those isomorphic to a rank-initial segment $\<V_\kappa,\in>$ of the cumulative set-theoretic universe $V$ cut off at an inaccessible cardinal $\kappa$. In particular, for any two models of $\ZFC_2$, one of them is isomorphic to a rank-initial segment of the other.
\end{theorem}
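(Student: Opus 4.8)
The plan is to prove the two inclusions separately and then read off the ``in particular'' clause. For the easy direction, I would check directly that $\langle V_\kappa,\in\rangle\models\ZFC_2$ for every inaccessible $\kappa$. Extensionality, foundation, pairing, union, and infinity (note $\omega\in V_\kappa$ since $\kappa>\omega$) are routine; power set holds because $V_\kappa$ is closed under real subsets of its elements ($y\subseteq x\in V_\kappa$ forces $\rank(y)\leq\rank(x)<\kappa$) and $\kappa$ is a limit; and choice descends from $V$. The one place both halves of inaccessibility are used is second-order replacement: if $F\subseteq V_\kappa$ is functional and $a\in V_\kappa$, then strong-limitness gives $|F[a]|\leq|a|\leq|V_{\rank(a)}|<\kappa$ and regularity gives that the ranks of the elements of $F[a]$ are bounded below $\kappa$, so $F[a]\in V_\kappa$; second-order separation is a special case.

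For the converse, suppose $M\models\ZFC_2$. Because in the full second-order semantics the second-order variables range over \emph{all} subcollections of the domain, the foundation axiom forces $\in^M$ to be genuinely well-founded: an external $\in^M$-descending $\omega$-sequence would, as a subset of $M$, be a nonempty class with no $\in^M$-minimal element. Being also extensional, $M$ is isomorphic via the Mostowski collapse to a transitive set $\langle N,\in\rangle$, which again models $\ZFC_2$. The decisive structural fact is that $N$ is \emph{closed under real subsets of its elements}: if $a\in N$ and $y\subseteq a$, then $y\subseteq a\subseteq N$ by transitivity, so $y$ is a legitimate class of $N$, and second-order separation yields $y=\{z\in a: z\in y\}\in N$. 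Writing $\kappa=\Ord\cap N$ (a genuine ordinal, since $N$ is transitive and models enough of $\ZF$), I would then show $N=V_\kappa$. The inclusion $N\subseteq V_\kappa$ is immediate since every element of $N$ has rank below $\kappa$; for $V_\kappa\subseteq N$ one proves by induction on $\alpha<\kappa$ that $V_\alpha\in N$, using closure under subsets together with the power-set axiom of $N$ at successor stages, and at limit stages using replacement in $N$ to collect $\{V_\beta:\beta<\alpha\}$ followed by the union axiom; since $N\models\ZF$ has no largest ordinal, this exhausts $V_\kappa$.

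It remains to see that $\kappa$ is inaccessible, and this is exactly where full second-order semantics is indispensable. Uncountability is clear because $\omega\in N$. For regularity, suppose some $f\colon\lambda\to\kappa$ with $\lambda<\kappa$ were cofinal; each element of $f$ is a Kuratowski pair of ordinals below $\kappa$ and hence lies in $V_\kappa=N$, so $f\subseteq N$ is a class of $N$ --- even though $f$ itself is not a member of $N$ --- and second-order replacement delivers the set $f[\lambda]\in N$, which is absurd since no set of ordinals cofinal in $\kappa$ can belong to $V_\kappa$. For strong-limitness, suppose $2^\mu\geq\kappa$ for some $\mu<\kappa$; then $\mathcal{P}(\mu)\in N$ (closure under subsets plus the power-set axiom of $N$), and collapsing an injection $\kappa\hookrightarrow\mathcal{P}(\mu)$ into a surjection $\mathcal{P}(\mu)\twoheadrightarrow\kappa$ gives a class function of $N$ with domain in $N$, to which second-order replacement applies, forcing $\kappa=\Ord\cap N$ to be an element of $N$ --- again absurd. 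Hence $\kappa$ is regular, a strong limit, and uncountable.

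Finally, given two models of $\ZFC_2$, say $M_1\cong V_{\kappa_1}$ and $M_2\cong V_{\kappa_2}$ with $\kappa_1\leq\kappa_2$ both inaccessible, the set $V_{\kappa_1}=\{x\in V_{\kappa_2}:\rank(x)<\kappa_1\}$ is a rank-initial segment of $V_{\kappa_2}$, so $M_1$ is isomorphic to a rank-initial segment of $M_2$. The main thing to get right --- and the conceptual heart of the matter --- is the bookkeeping about what counts as a ``class'' of $N$ in the full semantics: the cofinal maps and power-set surjections that refute singularity and the failure of strong-limitness are not members of the model but \emph{are} subsets of it, which is precisely why they are visible to second-order replacement and separation yet invisible to their first-order schematic analogues.
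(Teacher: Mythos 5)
Your proposal is correct and takes essentially the same route as the paper's sketch: verify directly that $V_\kappa\models\ZFC_2$ for inaccessible $\kappa$, and conversely use the full second-order semantics to get genuine well-foundedness, collapse to a transitive $N$, observe that second-order separation makes $N$ closed under arbitrary subsets of its elements (correctness about power sets), identify $N$ with $V_\kappa$ for $\kappa=\Ord\cap N$, and derive regularity and strong-limitness of $\kappa$ by applying second-order replacement to class functions that are subsets of the model without being members of it. The only cosmetic difference is that you extract well-foundedness from the class form of the foundation axiom, whereas the paper's sketch notes that the model contains all of its $\omega$-sequences (via second-order replacement) and then applies set-level foundation---either works, depending on how foundation is formulated in $\ZFC_2$.
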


To prove this, Zermelo observed that if $M$ is a model of the second-order axiomatization of set theory $\ZFC_2$, with the full second-order replacement axiom, then $M$ will be well-founded, since it will contain all its $\omega$-sequences; so it will be (isomorphic to) a transitive set; it will be correct about power sets; consequently, $M$'s internal cumulative $V_\alpha$ hierarchy will agree with the actual $V_\alpha$ hierarchy, and the height $\kappa=\Ord^M$ will have to be both regular and a strong limit. So $M$ will be $V_\kappa$ for some inaccessible cardinal; and conversely, all such $V_\kappa$ are models of $\ZFC_2$. It follows that for any two models of $\ZFC_2$, one of them is isomorphic to an initial segment of the other.

Zermelo had explicitly considered these structures $\<V_\kappa,\in>$ as a set-theoretic universe concept, and we accordingly refer to them as \emph{Zermelo-Grothendieck universes}, in light both of Zermelo's 1930 analysis and Grothendieck's subsequent rediscovery and use of them in category theory; they feature in the \emph{universe axiom}, which asserts that every set is an element of some such $V_\kappa$, or equivalently, that there are unboundedly many inaccessible cardinals.

In this article, we seek to investigate the extent to which Zermelo's quasi-categoricity analysis can rise fully to the level of categoricity, in light of the observation that many of the $V_\kappa$ universes are categorically characterized by their sentences or theories.

\begin{mainquestion}
Which models of $\ZFC_2$ satisfy fully categorical theories?
\end{mainquestion}

\noindent If $\kappa$ is the smallest inaccessible cardinal, for example, then up to isomorphism $\<V_\kappa,\in>$ is the unique model of $\ZFC_2$ satisfying the first-order sentence ``there are no inaccessible cardinals.'' The least inaccessible cardinal is therefore an instance of what we call a first-order \emph{sententially categorical} cardinal. Similar ideas apply to the next inaccessible cardinal, and the next, and so on for quite a long way. Many of the inaccessible universes thus satisfy categorical theories extending $\ZFC_2$ by a sentence or theory, either in first or second order, and we should like to investigate these categorical extensions of $\ZFC_2$.

In addition, we shall discuss the philosophical relevance of categoricity and point particularly to the philosophical problem posed by the tension between the widespread support for categoricity in our fundamental mathematical structures with set-theoretic ideas on reflection principles, which are at heart anti-categorical. Ultimately we shall identify grounds for preferring noncategoricity in one's foundational theory.\goodbreak

\newpage
\section{Main definition and preliminary results}

Our main theme concerns several diverse notions of categoricity in set theory.

\begin{maindefinition}\upshape\
 \begin{enumerate}
   \item A cardinal $\kappa$ is \emph{first-order sententially categorical}, if there is a first-order sentence $\sigma$ in the language of set theory, such that $V_\kappa$ is categorically characterized by $\ZFC_2+\sigma$.
   \item A cardinal $\kappa$ is \emph{first-order theory categorical}, if there is a first-order theory $T$ in the language of set theory, such that $V_\kappa$ is categorically characterized by $\ZFC_2+T$.
   \item A cardinal $\kappa$ is \emph{second-order sententially categorical}, if there is a second-order sentence $\sigma$ in the language of set theory, such that $V_\kappa$ is categorically characterized by $\ZFC_2+\sigma$.
   \item A cardinal $\kappa$ is \emph{second-order theory categorical}, if there is a second-order theory $T$ in the language of set theory, such that $V_\kappa$ is categorically characterized by $\ZFC_2+T$.
 \end{enumerate}
\end{maindefinition}

One may easily refine and extend these definitions, of course, by stratifying on complexity. Thus, we will have natural notions of $\Sigma^m_n$ categoricity in $m$th-order set theory, including $\Sigma^\alpha_n$ categoricity for transfinite order $\alpha$, for either theories or sentences---these amount to first-order assertions in $V_{\kappa+\alpha}$. And one may also consider categoricity in infinitary languages and so on. In this article, however, let us focus principally on the categoricity notions in the main definition.

This definition can be seen as an instance of notions considered by Stephen Garland in \cite{Garland1967:Second-order-cardinal-characterizability} and also in his dissertation \cite{Garland1967:Dissertation}, in which he considers the ordinals $\kappa$ that are characterizable in pure second-order logic, that is, using the pure set $\kappa$ itself, rather than $\<V_\kappa,\in>$, and concentrating especially on $\Delta^1_2$ characterizations. Such a perspective is also undertaken in \cite{Väänänen2012:Second-order-logic-or-set-theory}.

We should like to emphasize, however, that because we aim at fully categorical refinements of Zermelo's theorem, we are focused here on categoricity specifically for extensions of the second-order theory $\ZFC_2$. In particular, what we call first-order sententially and first-order theory categorical amounts ultimately to the categoricity of a second-order sentence $\ZFC_2+\sigma$ or second-order theory $\ZFC_2+T$, respectively, even when $\sigma$ and $T$ are first-order. In effect, we grade the complexity of a categorical account in our main definition by the logical resources it requires beyond $\ZFC_2$, as to whether it is a first-order extension or second-order and whether it is a sentence or theory.\footnote{The main distinctions of our main definition, between first-order and second-order extensions of $\ZFC_2$ and between sentential and theory extensions, do not seem to appear in \cite{Garland1967:Second-order-cardinal-characterizability} or \cite{Väänänen2012:Second-order-logic-or-set-theory}.}

Since Zermelo characterized the inaccessible cardinals $\kappa$ as those for which $V_\kappa\satisfies\ZFC_2$, all the cardinals $\kappa$ in the main definition above are inaccessible. We could equivalently have defined that $\kappa$ is \emph{first-order sententially categorical} if there is a first-order sentence $\sigma$ such that $\kappa$ is the only inaccessible cardinal for which $V_\kappa\satisfies\sigma$. And similarly with the other kinds of categorical cardinals. In this sense, the topic is about categorical characterizations of $V_\kappa$ for inaccessible $\kappa$.

We find it interesting to notice that theory categoricity is akin to Leibnizian discernibility, since $\kappa$ is theory categorical precisely when $V_\kappa$ can be distinguished from other candidates $V_\lambda$ by a sentence, since that is exactly what it means for their theories to be different.

If there are any inaccessible cardinals at all, then there will be easy examples of categorical cardinals. As we mentioned earlier, the least inaccessible cardinal $\kappa$ is characterized over $\ZFC_2$ by ``there are no inaccessible cardinals.'' The next inaccessible cardinal is characterized by the first-order sentence, ``there is exactly one inaccessible cardinal.'' More generally, the $\alpha$th inaccessible cardinal (if we index from $0$) is characterized by the assertion ``there are exactly $\alpha$ many inaccessible cardinals.'' If the ordinal $\alpha$ is \emph{absolutely definable}, meaning that it is definable in a manner that is absolute to every inaccessible $V_\kappa$, then this assertion can be made without parameters and so the $\alpha$th inaccessible cardinal will be sententially categorical when it exists. So the categorical cardinals proceed from the beginning for quite a long way, up to the $\omega_1^{\textsc{ck}}$th inaccessible cardinal and beyond. Since we have observed that the smallest large cardinals are generally categorical, this is a sense in which categoricity is a smallness notion amongst the large cardinals, while largeness is noncategorical. Yet, the $\omega_1$th inaccessible cardinal if it exists is sententially categorical, and the $\omega_2$nd, and more. And thus the door is opened to the possibility of gaps in the categorical cardinals, since there are only countably many sentences.

\begin{observation}
 Categoricity is downward absolute from $V$ to any $V_\theta$. That is, if $\kappa$ is categorical in one of the four manners of the main definition and $\theta>\kappa$, then the structure $V_\theta$ knows that $\kappa$ is categorical in that way.
\end{observation}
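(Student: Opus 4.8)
The plan is to unwind the definition of categoricity by means of Zermelo's Theorem~\ref{Theorem.Zermelo-quasi-categoricity}, which replaces the proper-class quantifier ``for every model of $\ZFC_2$'' by an ordinal quantifier ``for every inaccessible cardinal,'' and then to observe that passing from $V$ down to $V_\theta$ merely restricts the range of that quantifier---something that cannot falsify a statement of universal form.

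First I would record the reformulation. By Zermelo's Theorem, a cardinal $\kappa$ is first-order sententially categorical if and only if there is a first-order sentence $\sigma$ with $V_\kappa\satisfies\sigma$ such that $\kappa$ is the \emph{only} inaccessible cardinal $\lambda$ for which $V_\lambda\satisfies\sigma$; and similarly for the other three notions, where instead $\sigma$ ranges over first-order theories, over second-order sentences, or over second-order theories, and where $\satisfies$ is read as second-order satisfaction in the last two cases. So in every case the assertion ``$\kappa$ is categorical in the given manner'' has the shape
\[
\exists\sigma\ \forall\text{ inaccessible }\lambda\ \bigl(V_\lambda\satisfies\sigma\ \leftrightarrow\ \lambda=\kappa\bigr),
\]
whose matrix is an absolute condition in the parameters $\sigma,\lambda,\kappa$: Tarskian satisfaction in a set structure for a fixed formula (or fixed set of formulas) is absolute, and for second-order satisfaction in $V_\lambda$ one uses moreover that $V_\theta$ contains the true power set $\mathcal P(V_\lambda)=V_{\lambda+1}$ for every $\lambda<\theta$, so $V_\theta$ computes it correctly.

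Next, fix $\theta>\kappa$; it is enough that $V_\theta$ be closed enough to compute these satisfaction relations and to recognize which $\lambda<\theta$ are inaccessible---for instance $\theta$ inaccessible, or any $\theta$ with $V_\theta\satisfies\ZFC$. Applying Zermelo's Theorem inside $V_\theta$, the models of $\ZFC_2$ existing in $V_\theta$ are precisely the structures $V_\lambda$ for inaccessible $\lambda<\theta$, so $V_\theta$'s rendering of ``$\kappa$ is categorical'' is the displayed assertion with ``$\forall$ inaccessible $\lambda$'' narrowed to ``$\forall$ inaccessible $\lambda<\theta$'' (using also that inaccessibility of $\kappa$ and of these $\lambda$ is absolute between $V$ and $V_\theta$). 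Now suppose $\kappa$ is categorical in $V$, witnessed by some $\sigma$. Then $\sigma$, being a finite string (or a set of finite strings), is literally an element of $V_\theta$, and since $V_\theta$ agrees with $V$ about $V_\lambda\satisfies\sigma$ for each inaccessible $\lambda<\theta$ and about which ordinals $\le\theta$ are inaccessible, the restricted statement holds in $V_\theta$: it is an immediate consequence of the unrestricted statement, which holds by hypothesis. Hence $V_\theta$ knows that $\kappa$ is categorical in that manner, indeed with the very same witness $\sigma$.

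The argument presents no genuine obstacle; the only points requiring care are the routine absoluteness verifications---that first-order and, a bit more delicately, second-order satisfaction in the set structures $V_\lambda$ for $\lambda<\theta$ are computed correctly in $V_\theta$ (which is where $\mathcal P(V_\lambda)\in V_\theta$ enters), and that ``$\lambda$ is inaccessible'' is absolute between $V$ and $V_\theta$ for $\lambda<\theta$. The conceptual heart is simply that the family of rival universes $V_\lambda$ capable of spoiling the categoricity of $V_\kappa$ can only shrink when one passes to $V_\theta$, so categoricity, being a claim universally quantified over that family, descends.
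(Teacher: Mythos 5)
Your proof is correct and follows essentially the same route as the paper's: recast categoricity (via Zermelo's theorem) as ``$\kappa$ is the unique inaccessible whose $V_\kappa$ satisfies the witness,'' note that $V_\theta$ computes the relevant satisfaction relations and inaccessibility correctly, and observe that passing to $V_\theta$ only shrinks the pool of competitors, so the universally quantified claim descends with the same witness. The paper's proof is just a terser statement of exactly these points.
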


\begin{proof}
The point is that $V_\theta$ can verify that $V_\kappa$ has whatever theory it has and there are fewer challenges to categoricity in $V_\theta$ than in $V$. Every inaccessible cardinal $\delta$ in $V_\theta$ is also inaccessible in $V$, and consequently differs in its theory from $V_\kappa$, and $V_\theta$ can see this.
\end{proof}

Let us consider now upward absoluteness, which might seem at first to be too much to ask for, since perhaps a cardinal $\kappa$ can be categorical inside $V_\theta$ only because $\theta$ is not large enough to reveal the other models $V_\lambda$ that satisfy that same characterization. But for sentential categoricity, it turns out that this situation never actually arises, and consequently sentential categoricity is fully absolute between $V$ and any $V_\theta$.

\begin{theorem}\label{Theorem.Sentential-categoricity-absolute-to-Vtheta}
Sentential categoricity (first and second order) is absolute between $V$ and any $V_\theta$, both upward and downward. That is, if $\kappa<\theta$, then $\kappa$ is sententially categorical in $V$ if and only if it has the same sentential categoricity in $V_\theta$.
\end{theorem}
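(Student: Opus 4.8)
The plan is to get the downward direction essentially for free from the preceding Observation, which already shows that categoricity of any of the four kinds passes from $V$ down to $V_\theta$, and indeed with the very same witnessing sentence. So all the work is in the upward direction, and I would prove it in the stronger form: if $\kappa<\theta$ and $\kappa$ is sententially categorical inside $V_\theta$, then $\kappa$ is sententially categorical in $V$---possibly, as it turns out, by means of a \emph{different} sentence. The argument is uniform for first- and second-order sentences.

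So fix a sentence $\sigma$, of whichever order, witnessing the categoricity of $\kappa$ in $V_\theta$; using the equivalent formulation noted in the remarks above, this means that in $V_\theta$ the cardinal $\kappa$ is the unique inaccessible $\mu$ (necessarily with $\mu<\theta$) for which $\<V_\mu,\in>\models\sigma$. Next I would invoke exactly the absoluteness facts used in the proof of the Observation: for $\mu<\theta$, the ordinal $\mu$ is inaccessible in $V_\theta$ if and only if it is inaccessible in $V$, and for such $\mu$ the structure $\<V_\mu,\in>$, and the satisfaction of $\sigma$ over it, are computed the same way in $V$ and in $V_\theta$. Hence the assertion ``$\kappa$ is the unique inaccessible $\mu<\theta$ with $\<V_\mu,\in>\models\sigma$'' holds in $V$ as well. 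In particular $\<V_\kappa,\in>\models\sigma$ and no inaccessible cardinal below $\kappa$ satisfies $\sigma$, so in $V$ the cardinal $\kappa$ is the \emph{least} inaccessible with $\<V_\kappa,\in>\models\sigma$. What this does not yet rule out is a larger inaccessible $\lambda\geq\theta$ with $\<V_\lambda,\in>\models\sigma$---and such a $\lambda$ is precisely what a failure of categoricity of $\kappa$ in $V$ would supply.

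The heart of the proof is to absorb that possibility by strengthening $\sigma$. Let $\sigma'$ be the sentence asserting ``$\sigma$, and there is no inaccessible cardinal $\mu$ such that $\<V_\mu,\in>\models\sigma$.'' The clause ``$\<V_\mu,\in>\models\sigma$'' is legitimate here: for any ordinal $\mu$ the structure $\<V_\mu,\in>$ is a set, and for a fixed $\sigma$ the predicate of satisfaction over it---ordinary first-order satisfaction, or second-order satisfaction with the second-order quantifiers of $\sigma$ ranging over the subsets of $V_\mu$---is uniformly first-order definable (this is a translation of a single fixed sentence, so no second-order truth predicate is needed). Consequently $\sigma'$ is a sentence of the same order as $\sigma$, the extra clause being a first-order conjunct, which in the second-order case is merely a special case. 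Now for any inaccessible $\nu$ we have $\<V_\nu,\in>\models\sigma'$ if and only if $\<V_\nu,\in>\models\sigma$ and no inaccessible $\mu<\nu$ satisfies $\sigma$, i.e.\ if and only if $\nu$ is the least inaccessible satisfying $\sigma$, which by the previous paragraph means exactly $\nu=\kappa$. Therefore $\ZFC_2+\sigma'$ categorically characterizes $V_\kappa$ in $V$, so $\kappa$ is first-order (respectively, second-order) sententially categorical in $V$, matching its status in $V_\theta$; this completes the upward direction.

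The main obstacle---and really the only delicate point---is that one cannot expect the original $\sigma$ to keep working when we move up from $V_\theta$ to $V$, since new inaccessibles may emerge that also satisfy $\sigma$; the sentence must be repaired, and one must be sure the repair stays within the permitted logical resources. That last check is exactly the observation that over a set structure $\<V_\mu,\in>$ the relevant satisfaction relation is first-order definable, so the added clause costs nothing in order. Everything else is routine bookkeeping with the absoluteness of inaccessibility and of the theories of the $V_\mu$, as already deployed in the Observation.
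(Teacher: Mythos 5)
Your proposal is correct and follows essentially the same route as the paper: the downward direction with the same sentence (fewer competitors in $V_\theta$), and the upward direction by replacing $\sigma$ with $\sigma$ plus ``no inaccessible $\delta$ has $V_\delta\satisfies\sigma$,'' which characterizes $\kappa$ as the least inaccessible satisfying $\sigma$. Your added remark that the repair clause is first-order (so the order of the characterization is preserved) is a nice explicit check of a point the paper leaves implicit.
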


\begin{proof}
If $\kappa$ is sententially categorical in $V$, then it is sententially categorical in $V_\theta$ by the same sentence, since there are fewer competitors inside $V_\theta$ than in $V$.

Conversely, suppose that $\kappa$ is sententially categorical inside $V_\theta$ with $\kappa<\theta$. So there is a sentence $\sigma$, such that $V_\kappa\satisfies\sigma$ and no other $V_\lambda$ in $V_\theta$ satisfies $\sigma$. In particular, $V_\kappa$ is the first inaccessible level to satisfy $\sigma$, and so the sentence $\sigma+$``there is no inaccessible $\delta$ with $V_\delta\satisfies\sigma$'' is a categorical characterization of $V_\kappa$ in the full universe $V$. No larger inaccessible level can satisfy this sentence, since $V_\kappa\satisfies\sigma$. So $\kappa$ is sententially categorical in $V$.
\end{proof}

Another way to describe the upward absoluteness is that failures of sentential categoricity are always witnessed by \emph{smaller} as opposed to larger inaccessible cardinals with the same sentence. Because of this, every failure of sentential categoricity provides instances of inaccessible reflection. In the first-order case, one can view this as a weak form of Mahloness, since every Mahlo cardinal has a robust first-order inaccessible reflection property. In the second-order case, it is a weak form of indescribability.

\begin{theorem}\label{Theorem.Failures-of-sentential-categoricity-smaller}
If an inaccessible cardinal $\kappa$ is not sententially categorical (either first or second order), then every sentence $\sigma$ of that order that is true in $V_\kappa$ is also true in $V_\delta$ for some smaller inaccessible cardinal $\delta<\kappa$.
\end{theorem}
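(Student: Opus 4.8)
The plan is to argue by contraposition: I assume $\kappa$ is inaccessible and that some sentence $\sigma$ of the relevant order (first or second) satisfies $V_\kappa\satisfies\sigma$ but $V_\delta\not\satisfies\sigma$ for every inaccessible $\delta<\kappa$, and from this I manufacture a single sentence of that same order that categorically characterizes $V_\kappa$ over $\ZFC_2$, contradicting the hypothesis that $\kappa$ is not sententially categorical. This is exactly the move already used in the proof of Theorem~\ref{Theorem.Sentential-categoricity-absolute-to-Vtheta}, and I would simply re-deploy it.

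Concretely, let $\sigma^\ast$ be the sentence asserting $\sigma$ together with the clause ``there is no inaccessible cardinal $\delta$ with $V_\delta\satisfies\sigma$.'' First, $V_\kappa\satisfies\sigma^\ast$: the first conjunct is the hypothesis, and the second holds because, since $\<V_\kappa,\in>$ is a genuine rank-initial segment of $V$, it is correct about which $\delta<\kappa$ are inaccessible (Zermelo's analysis, Theorem~\ref{Theorem.Zermelo-quasi-categoricity}), and by assumption none of those $\delta$ has $V_\delta\satisfies\sigma$. Second, no other inaccessible level models $\sigma^\ast$: an inaccessible $\lambda<\kappa$ violates the first conjunct since $V_\lambda\not\satisfies\sigma$, while an inaccessible $\lambda>\kappa$ violates the second conjunct, since $\kappa$ is then an inaccessible cardinal below $\lambda$ with $V_\kappa\satisfies\sigma$, and $V_\lambda$ (being correct about power sets and cofinalities) perceives this. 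Hence $\ZFC_2+\sigma^\ast$ is categorical with unique model $\<V_\kappa,\in>$, contradicting non-categoricity of $\kappa$; so some inaccessible $\delta<\kappa$ must already satisfy $\sigma$.

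The one delicate point — and the only real obstacle — is verifying that $\sigma^\ast$ has the \emph{same} logical order as $\sigma$, i.e.\ that the added clause ``there is no inaccessible $\delta$ with $V_\delta\satisfies\sigma$'' does not raise the complexity. Here the key observation is that for $\delta<\kappa$, the relevant witnessing sets lie well below $\kappa$: $\mathcal P(V_\delta)=V_{\delta+1}$, and the subsets of the finite products $V_\delta^k$ needed to interpret relational second-order quantifiers lie in $V_{\delta+n}$ for a suitable finite $n$, and all these sets belong to $V_\kappa$ because $\kappa$ is a limit. Consequently ``$V_\delta\satisfies\sigma$'' is a first-order property (over $V_\kappa$) of the set $V_{\delta+n}$, even when $\sigma$ is second order, its second-order quantifiers being interpreted by the actual subsets of $V_\delta$; and ``$\delta$ is inaccessible'' is first-order over $V_\kappa$ by the correctness in Theorem~\ref{Theorem.Zermelo-quasi-categoricity}. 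Thus the appended clause is first order in all cases, so $\sigma^\ast$ is first order when $\sigma$ is and second order when $\sigma$ is, as required. The same bookkeeping handles the $\Sigma^m_n$-stratified refinements mentioned after the main definition, with $V_{\delta+\alpha}$ in place of $V_{\delta+n}$.
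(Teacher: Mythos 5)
Your proof is correct and is essentially the paper's own argument: the paper likewise passes to the sentence $\sigma$ plus ``there is no inaccessible $\delta$ with $V_\delta\satisfies\sigma$'' (exactly as in its proof of theorem \ref{Theorem.Sentential-categoricity-absolute-to-Vtheta}) and notes that a first occurrence of $\sigma$ at an inaccessible level would yield a categorical characterization of $\kappa$. Your extra bookkeeping showing the appended clause is first-order over $V_\kappa$ even when $\sigma$ is second-order is a correct elaboration of what the paper leaves implicit.
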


\begin{proof}
If $\sigma$ is true in $V_\kappa$, then it cannot be the first time this happens at an inaccessible cardinal, since otherwise this very situation could be described in a sentence, providing a categorical characterization of $\kappa$.
\end{proof}

The corresponding fact is not true for theory categoricity, if one expects to reflect the whole theory, because failures of theory categoricity arise when there are inaccessible $\kappa<\lambda$ for which $V_\kappa$ and $V_\lambda$ have the same theory. But in this case, there will be a smallest such $\kappa$ with that particular theory, and this $\kappa$ is not theory categorical, but this is not witnessed by any smaller $\delta<\kappa$ precisely because $\kappa$ was already the smallest with that theory. In particular, the analogue of theorem \ref{Theorem.Sentential-categoricity-absolute-to-Vtheta} also fails for theory categoricity, because if $\kappa$ is the least inaccessible cardinal that is not theory categorical, then there will be some $\lambda>\kappa$ with the same theory, and if $\lambda$ is smallest with this property, then $\kappa$ will be theory categorical inside $V_\lambda$, but not in $V$. Nevertheless, there is an approximation version of downward reflection for failures of theory categoricity, which can be seen as a strengthening of weak Mahloness.\goodbreak

\begin{theorem}\label{Theorem.Non-theory-categorical-cardinals-are-reflecting}
If an inaccessible cardinal $\kappa$ is not first-order theory categorical, then for every natural number $n$, there is a smaller inaccessible cardinal $\delta<\kappa$ for which $V_\delta$ has the same $\Sigma_n$ theory. And similarly, if $\kappa$ is not second-order theory categorical, there for every $n$ there is a smaller inaccessible $V_\delta$ with the same $\Sigma^1_n$ theory.
\end{theorem}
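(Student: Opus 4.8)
The plan is to argue by contradiction: assuming the conclusion fails for some fixed $n$, I will assemble a first-order theory $T$ for which $\ZFC_2+T$ is categorical, so that $\kappa$ is first-order theory categorical after all. (The paragraph preceding the theorem already isolates why one cannot argue more directly: a failure of theory categoricity at $\kappa$ may be witnessed only by a larger inaccessible $\lambda>\kappa$ sharing the full theory of $V_\kappa$, so there need not be an obvious smaller witness; when the witness $\lambda$ happens to lie below $\kappa$ it already serves as $\delta$ simultaneously for all $n$, so the work is entirely in the other case, and the contradiction argument below handles both uniformly.)

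Fix $n$ and suppose toward a contradiction that no inaccessible $\delta<\kappa$ has the same $\Sigma_n$ theory as $V_\kappa$; write $\mathcal{T}$ for the set of $\Sigma_n$ sentences true in $V_\kappa$. Using the definable partial truth predicate $\Tr_{\Sigma_n}$ together with the definability of first-order satisfaction in set structures, I would express by a single first-order sentence $\theta_n$ the assertion ``there is no inaccessible $\delta$ such that $V_\delta$ agrees with the universe on every $\Sigma_n$ sentence.'' Then I let $T$ consist of every $\Sigma_n$ sentence in $\mathcal{T}$, the negation of every $\Sigma_n$ sentence not in $\mathcal{T}$, and the sentence $\theta_n$; this is a first-order theory, and its complexity is irrelevant.

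First I would verify $V_\kappa\satisfies\ZFC_2+T$: the $\ZFC_2$ part holds because $\kappa$ is inaccessible (Zermelo), the $\Sigma_n/\Pi_n$ part holds by the choice of $T$, and $V_\kappa\satisfies\theta_n$ holds precisely because of the contradiction hypothesis---using that $V_\kappa$, being transitive, computes satisfaction over each set $V_\delta$ correctly and evaluates $\Tr_{\Sigma_n}$ consistently with its own $\Sigma_n$ truths. Then I would check categoricity: by Zermelo's theorem any model of $\ZFC_2$ is isomorphic to $V_\mu$ for some inaccessible $\mu$, so suppose $V_\mu\satisfies\ZFC_2+T$. The $\Sigma_n/\Pi_n$ fragment of $T$ forces $\Th_{\Sigma_n}(V_\mu)=\mathcal{T}=\Th_{\Sigma_n}(V_\kappa)$. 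If $\mu<\kappa$, then $\mu$ is an inaccessible below $\kappa$ with the same $\Sigma_n$ theory as $V_\kappa$, contradicting the hypothesis. If $\mu>\kappa$, then inside $V_\mu$ the ordinal $\kappa$ is inaccessible and $V_\kappa$ is computed correctly, and since $\Sigma_n$ truth in $V_\mu$ agrees with $\Tr_{\Sigma_n}^{V_\mu}$, the model $V_\mu$ sees $\kappa$ as an inaccessible $\delta$ with $V_\delta$ agreeing with the universe on all $\Sigma_n$ sentences, contradicting $V_\mu\satisfies\theta_n$. Hence $\mu=\kappa$, so $\ZFC_2+T$ is categorical and $\kappa$ is first-order theory categorical---contradiction. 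Therefore for every $n$ there is an inaccessible $\delta<\kappa$ with $\Th_{\Sigma_n}(V_\delta)=\Th_{\Sigma_n}(V_\kappa)$.

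For the second-order statement I would run exactly the same argument with $\Sigma^1_n$ replacing $\Sigma_n$, using the second-order partial truth predicate and second-order satisfaction over the set structures $V_\delta$; the resulting $T$ is then a second-order theory, and the assumed failure of second-order theory categoricity at $\kappa$ supplies the contradiction. The one delicate point throughout---and the step I expect to require the most care---is the bookkeeping showing that $\theta_n$ is a legitimate first-order (respectively second-order) sentence and that it is evaluated correctly inside the transitive models $V_\mu$: this rests on the definability of the partial truth predicate, the definability and absoluteness of satisfaction for set structures, and the transitivity of models of $\ZFC_2$. Once that is pinned down, the rest is routine.
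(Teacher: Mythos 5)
Your proposal is correct, but it runs in the opposite direction from the paper's argument. The paper proves the statement directly: since $\kappa$ is not theory categorical, there is an inaccessible $\lambda\neq\kappa$ with $V_\lambda$ having the same (first- or second-order) theory as $V_\kappa$; if $\lambda<\kappa$ one is done at once, and if $\lambda>\kappa$, then $V_\lambda$ satisfies the single sentence ``there is a smaller inaccessible $\delta$ whose $V_\delta$ has the same $\Sigma_n$ (resp.\ $\Sigma^1_n$) theory as the universe''---witnessed by $\delta=\kappa$, using the definable partial truth predicate and the absoluteness of satisfaction over set structures---and since that sentence belongs to the shared theory, it holds in $V_\kappa$ as well, which produces the desired $\delta<\kappa$. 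You instead prove the contrapositive: if reflection fails at some fixed $n$, then the $\Sigma_n/\Pi_n$ fragment of the theory of $V_\kappa$ together with your sentence $\theta_n$ categorically characterizes $V_\kappa$ over $\ZFC_2$ (Zermelo's theorem reducing the competitors to $V_\mu$ for inaccessible $\mu$, with the cases $\mu<\kappa$ and $\mu>\kappa$ each refuted), so $\kappa$ would be theory categorical after all. Both arguments rest on exactly the same expressibility facts---definable $\Sigma_n$ and $\Sigma^1_n$ partial truth predicates, correctness of satisfaction over set structures inside transitive models $V_\mu$, and quasi-categoricity---so the ``delicate bookkeeping'' you flag is the same bookkeeping the paper silently relies on. What your route buys is a sharper conclusion: when reflection fails at level $n$, the categorical characterization uses only the $\Sigma_n\cup\Pi_n$ theory plus one extra sentence, a complexity-bounded form of theory categoricity. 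What it costs is length: the paper's transfer of a single true sentence along elementary equivalence with the witness $\lambda$ is a two-line argument, whereas yours requires the full categoricity verification for the auxiliary theory $T$.
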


\begin{proof}
Suppose that $\kappa$ is not first-order theory categorical. Then there is some other inaccessible cardinal $\lambda$ for which $V_\kappa$ and $V_\lambda$ have the same theory. If $\lambda<\kappa$, then we are done immediately. So assume $\lambda>\kappa$. Notice that $V_\lambda$ thinks that its $\Sigma_n$ theory is shared by $V_\kappa$. So it is part of the theory of $V_\lambda$ that there is a smaller inaccessible cardinal with the same $\Sigma_n$ theory. So this statement is also true in $V_\kappa$, as desired. The argument works the same with first-order or second-order.
\end{proof}

The general phenomenon is that failures of categoricity lead to inaccessible reflection, which we view as weak forms of Mahloness and indescribability.

Let us next observe that in principle any statement or theory that is consistent with $\ZFC_2$ can be categorical.

\begin{theorem}\label{Theorem.Every-theory-can-be-categorical}
 Any sentence or theory (whether first or second order) that is true in some set-theoretic universe $V_\kappa\models\ZFC_2$ is either a categorical characterization of this model $V_\kappa$ over $\ZFC_2$ or else it is consistent with $\ZFC_2$ that it is such a categorical characterization of such a universe.
\end{theorem}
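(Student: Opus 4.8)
The plan is to dispatch the easy case first and then reduce the interesting case to a truncation argument. If the given sentence or theory $T$, true in $V_\kappa\models\ZFC_2$, already happens to be a categorical characterization over $\ZFC_2$, then since $V_\kappa$ is a model of $\ZFC_2+T$ the unique model up to isomorphism must be $V_\kappa$ itself, and we land in the first disjunct of the conclusion. So I would assume from now on that $T$ is \emph{not} categorical over $\ZFC_2$ and aim to exhibit an actual model of $\ZFC_2$ inside which $T$ does categorically characterize some universe $V_\lambda\models\ZFC_2$; producing such a model witnesses the required consistency with $\ZFC_2$.

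The key idea is to let $\mu$ be the \emph{least} inaccessible cardinal with $V_\mu\models T$. This exists because the class of inaccessibles $\delta$ with $V_\delta\models T$ is nonempty---it contains $\kappa$---and is a class of ordinals, hence well-ordered. Because $T$ is assumed non-categorical, there is a further inaccessible $\nu$ with $V_\nu\models T$, and by minimality of $\mu$ this $\nu$ lies strictly above $\mu$; hence the inaccessibles above $\mu$ form a nonempty class and there is a least inaccessible $\mu'>\mu$. Now $V_{\mu'}\models\ZFC_2$ by Zermelo's theorem (Theorem~\ref{Theorem.Zermelo-quasi-categoricity}). I claim $V_{\mu'}$ regards $T$ as categorically characterizing $V_\mu$: the inaccessible levels inside $V_{\mu'}$ are exactly the $V_\delta$ for inaccessible $\delta<\mu'$, that is, $\mu$ together with the inaccessibles below it; by minimality of $\mu$ none of the latter satisfies $T$, and there is no inaccessible strictly between $\mu$ and $\mu'$. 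So $V_\mu$ is the unique inaccessible level in $V_{\mu'}$ satisfying $T$, and therefore ``$T$ categorically characterizes $V_\mu\models\ZFC_2$'' holds in the model $V_{\mu'}$ of $\ZFC_2$, as required.

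This argument is uniform across all four cases of the main definition: nothing about the logical complexity of $T$ is used beyond the facts that $T$ is a set lying in any inaccessible $V_{\mu'}$ (there being only countably many sentences in the relevant language), that inaccessible $V_{\mu'}$ correctly computes second-order satisfaction in its smaller levels $V_\delta$, and that the inaccessible levels satisfying $T$ form a well-ordered nonempty class. I do not expect a genuine obstacle here; the one point needing care is choosing \emph{where} to truncate---one must pass to the least inaccessible strictly above the least witness $\mu$, since cutting the universe at $\mu$ itself would discard the intended model $V_\mu$, whereas cutting any higher risks reintroducing a rival level $V_\nu\models T$ and thereby destroying categoricity.
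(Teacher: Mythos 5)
Your proof is correct and takes essentially the same approach as the paper: reduce to the least inaccessible witness $\mu$ of $T$ and then truncate the universe at a suitable inaccessible above it, noting that the resulting level is a model of $\ZFC_2$ inside which $V_\mu$ is the unique inaccessible level satisfying $T$. The only (immaterial) difference is the truncation point---the paper cuts at the next inaccessible whose level satisfies $T$, whereas you cut at the least inaccessible above $\mu$; both choices yield the desired model witnessing consistency.
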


\begin{proof}
Suppose that $V_\kappa\models\ZFC_2+\Sigma$. If $\kappa$ is the only inaccessible cardinal for which $V_\kappa$ satisfies $\Sigma$, then this is a categorical characterization of $V_\kappa$ and we are done. Otherwise, there are at least two such inaccessible cardinals. If $\kappa$ is the least for which $V_\kappa\satisfies\ZFC_2+\Sigma$ and $\lambda$ is the next one, then $V_\lambda$ is a model of $\ZFC_2$ inside which only $V_\kappa$ satisfies $\ZFC_2+\Sigma$. So inside $V_\lambda$, this is a categorical characterization of $V_\kappa$.
\end{proof}\goodbreak

Thus, every theory that can be true in a set-theoretic universe can also be a categorical characterization of such a set-theoretic universe. In this sense, the answer to the question ``Which extensions of $\ZFC_2$ can be categorical?'' is that these are exactly the same extensions that can be true at all. The question whether a sentence or theory is categorical depends of course on what else there is, and in this sense, categoricity assertions amount to large cardinal non-existence assertions---one is asserting that there is no other inaccessible cardinal satisfying that same sentence or theory.

Isn't it mildly paradoxical for a model $V_\lambda$ to satisfy a theory $T$ and simultaneously to satisfy the statement that this theory $T$ categorically characterizes a different, strictly smaller structure $V_\kappa$, for $\kappa<\lambda$. This isn't contradictory, since the situation is that there there is only one such $V_\kappa$ inside $V_\lambda$, and so $V_\lambda$ looks upon the theory as categorical, even though it satisfies the same theory. The situation is rather similar to how a model of $\PA+\neg\Con(\PA)$ is mistaken about consistency, because it thinks that $\PA$ is true yet also inconsistent; our model $V_\lambda$ thinks the theory $T$ has no other models than $V_\kappa$, even though it itself is a second, distinct model.\goodbreak

The phenomenon occurs already with $\ZFC_2$ itself in the second inaccessible cardinal. Namely, if $\kappa_0<\kappa_1$ are the first two inaccessible cardinals, then $V_{\kappa_1}\satisfies\ZFC_2+``\ZFC_2$ is a categorical characterization of $V_{\kappa_0}$.'' The model $V_{\kappa_1}$ can thus be seen as simply mistaken about categoricity; and theorem \ref{Theorem.Every-theory-can-be-categorical} establishes the phenomenon generally with any theory. In this sense, perhaps we don't really want to know which extensions of $\ZFC_2$ \emph{can} be categorical, as seen in some (perhaps misguided) model, but rather which extensions \emph{are} categorical, in the fully complete set-theoretic universe $V$, not artificially truncated at some inaccessible cardinal $\delta$. Whether such a perspective is meaningful is, of course, a central question in the philosophy of set theory. Meanwhile, let us proceed to investigate the nature of categoricity mathematically, as a property of cardinals in \ZFC\ under diverse assumptions about what cardinals there might be. We shall return to the philosophical issue concerning the interaction of object theory and meta theory in the final section.

Categoricity exhibits a certain upward transmission effect from any categorical cardinal to larger instances of categoricity. For any cardinal $\kappa$, let us use the boldface successor notation $\kappa^\Plus$ to denote the next inaccessible cardinal above $\kappa$.

\begin{theorem}\label{Theorem.Kappa^Plus-sententially-categorical}
If $\kappa$ is first-order sententially categorical, or indeed merely second-order sententially categorical, or merely $\alpha$-order sententially categorical for some absolutely definable ordinal $\alpha$, then the next inaccessible cardinal $\kappa^\Plus$ is first-order sententially categorical.
\end{theorem}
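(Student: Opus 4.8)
The plan is to take a sentence $\sigma$ witnessing the categoricity of $V_\kappa$ over $\ZFC_2$ --- of whatever order is available --- and manufacture from it a genuinely \emph{first}-order sentence $\tau$ that categorically characterizes $V_{\kappa^\Plus}$. The point that makes this possible is that although $\sigma$ may be second- or $\alpha$-order, the assertion ``$\<V_\delta,\in>\satisfies\ZFC_2+\sigma$'' is a \emph{first}-order assertion about $V_\delta$ once $V_\delta$ is viewed as a set sitting inside a sufficiently tall ambient universe: the higher-order quantifiers of $\sigma$ then simply range over the genuine sets (the elements of $V_{\delta+\alpha}$) available in that ambient universe, so the whole statement translates in the usual way into a first-order statement about $\<V_{\delta+\alpha},\in>$, uniformly in $\delta$.

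First I would fix the ingredients. By hypothesis there is an absolutely definable ordinal $\alpha$ (with $\alpha=1$ or $\alpha=2$ giving the first- and second-order cases) and an $\alpha$-order sentence $\sigma$ such that the only inaccessible cardinal $\delta$ with $V_\delta\satisfies\ZFC_2+\sigma$, read with full higher-order semantics, is $\delta=\kappa$. Since $\alpha$ is absolutely definable it lies below every inaccessible cardinal, in particular $\alpha<\kappa$. Now let $\tau$ be the first-order sentence of set theory asserting: ``there is a largest inaccessible cardinal $\delta$, and $\<V_\delta,\in>\satisfies\ZFC_2+\sigma$'', where the second conjunct is rendered first-order by the translation just described, using the absolute definition of $\alpha$ so that no parameter is needed. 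This translation makes sense, and computes the correct truth value, over any $V_\theta$ in which $V_{\delta+\alpha}$ is present; in particular it is evaluated correctly inside $V_{\kappa^\Plus}$, since $\alpha<\kappa<\kappa^\Plus$ and $\kappa^\Plus$ is a cardinal, so $\kappa+\alpha<\kappa^\Plus$.

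Next I would verify $V_{\kappa^\Plus}\satisfies\ZFC_2+\tau$ and then --- the heart of the matter --- the categoricity of $\tau$. The first part is immediate: $V_{\kappa^\Plus}\satisfies\ZFC_2$ since $\kappa^\Plus$ is inaccessible; the inaccessible cardinals of $V_{\kappa^\Plus}$ are exactly the inaccessibles of $V$ below $\kappa^\Plus$, whose largest is $\kappa$ because $\kappa^\Plus$ is by definition the next inaccessible above $\kappa$; and $\<V_\kappa,\in>\satisfies\ZFC_2+\sigma$ by the choice of $\sigma$, a fact $V_{\kappa^\Plus}$ computes correctly. For categoricity, suppose $\<V_\lambda,\in>\satisfies\ZFC_2+\tau$; by Zermelo's theorem we may assume $\lambda$ is inaccessible. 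Then there is some $\delta<\lambda$ which $V_\lambda$ regards as the largest inaccessible with $\<V_\delta,\in>\satisfies\ZFC_2+\sigma$. Since $\lambda$ is inaccessible, $V_\lambda$ is correct about power sets and about which cardinals below $\lambda$ are inaccessible, and (as $\alpha<\delta<\lambda$, whence $\delta+\alpha<\lambda$) it computes $V_{\delta+\alpha}$ and the satisfaction relation over it correctly; hence $\delta$ is genuinely inaccessible and $\<V_\delta,\in>\satisfies\ZFC_2+\sigma$ holds in $V$. By the categoricity of $\sigma$ this forces $V_\delta\iso V_\kappa$, so $\delta=\kappa$ (isomorphic transitive sets are equal). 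And since $V_\lambda$ sees no inaccessible strictly between $\kappa$ and $\lambda$, while $\lambda>\delta=\kappa$ is itself inaccessible, minimality of $\kappa^\Plus$ gives $\lambda=\kappa^\Plus$. Thus $\tau$ is a first-order sentence categorically characterizing $V_{\kappa^\Plus}$ over $\ZFC_2$, as desired.

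The one point needing care --- the main obstacle, such as it is --- is the bookkeeping in the first paragraph: that the $\alpha$-order sentence $\sigma$ genuinely translates, uniformly in $\delta$, into a first-order statement about $\<V_{\delta+\alpha},\in>$ whose truth value is absolute among $V$, $V_{\kappa^\Plus}$, and the various $V_\lambda$ appearing in the categoricity check. This is exactly where absolute definability of $\alpha$ (so the translation is parameter-free and uniform) and inaccessibility/strong-limitness (so that $V_{\delta+\alpha}$ and all the relevant power sets are actually present in the structures doing the evaluating) are used; everything else is the routine absoluteness of inaccessibility and of set-sized satisfaction below an inaccessible cardinal.
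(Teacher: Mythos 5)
Your proposal is correct and is essentially the paper's own argument: the paper characterizes $V_{\kappa^\Plus}$ by the first-order sentence asserting that there is a largest inaccessible cardinal and that its rank-initial segment satisfies the characterizing sentence $\psi$, which is exactly your $\tau$. You have simply spelled out the translation of the higher-order sentence into a first-order assertion about $V_{\delta+\alpha}$ and the absoluteness bookkeeping that the paper leaves implicit.
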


\begin{proof}
If $\psi$ is the sentence characterizing $V_\kappa$, then the next inaccessible cardinal $V_{\kappa^\Plus}$ can see that there is a largest inaccessible cardinal $\kappa$ and that $V_\kappa$ satisfies $\psi$, and this property characterizes $\kappa^\Plus$.
\end{proof}

Notice that the sentence characterizing $\kappa^\Plus$ in theorem \ref{Theorem.Kappa^Plus-sententially-categorical} has complexity $\Delta_3$, since it is the conjunction of a $\Sigma_2$ sentence, asserting that there is an inaccessible cardinal $\kappa$ for which $V_\kappa\satisfies\psi$, and a $\Pi_2$ sentence, asserting that there are no inaccessible cardinals above such a $\kappa$.

This argument has an analogue with the theory categorical cardinals and also with what we call the fresh cardinals. Namely, an inaccessible cardinal $\kappa$ is \emph{fresh}, for its first or second order theory, as specified, if the corresponding theory of $V_\kappa$ does not arise as the theory of $V_\delta$ for any inaccessible $\delta<\kappa$. In other words, $\kappa$ is fresh when the theory of $V_\kappa$ occurs for the first time at $\kappa$. Every theory categorical cardinal, of course, is fresh, but this is not an equivalence when there are inaccessible cardinals $\kappa$ that are not theory categorical, since the first instance of any particular theory at an inaccessible cardinal will be an instance of freshness, even if that theory occurs again later, as it must occur when the cardinal is not theory categorical.

\begin{theorem}\label{Theorem.If-kappa-fresh-then-kappa-Plus-theory-categorical}
If $\kappa$ is fresh (either first-order, second-order, or indeed $\alpha$-order for any absolutely definable ordinal $\alpha$), then the next inaccessible cardinal $\kappa^\Plus$ is first-order theory categorical.
\end{theorem}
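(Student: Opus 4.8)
The plan is to mimic the proof of Theorem~\ref{Theorem.Kappa^Plus-sententially-categorical}, but using the theory of $V_\kappa$ in place of a single sentence, and exploiting freshness to ensure that no inaccessible level below $\kappa^\Plus$ can be confused with $\kappa^\Plus$. First I would let $T$ be the (first-order, second-order, or $\alpha$-order, as specified) theory of $V_\kappa$, and let $T^\Plus$ be the first-order theory of $V_{\kappa^\Plus}$. I claim $T^\Plus$ is a categorical characterization of $V_{\kappa^\Plus}$ over $\ZFC_2$. The key observation is that $V_{\kappa^\Plus}$ can \emph{see} $\kappa$ as its largest inaccessible cardinal, and can verify that $V_\kappa\satisfies T$; moreover, since $\kappa$ is fresh, $V_{\kappa^\Plus}$ can also verify that $\kappa$ is the \emph{only} inaccessible cardinal below it whose $V$-level has theory $T$. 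Thus the first-order assertion ``there is a largest inaccessible cardinal $\kappa$, and it is the unique inaccessible $\delta$ for which $\Th(V_\delta)=T$'' is part of $T^\Plus$.

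Next I would verify categoricity. Suppose $\lambda$ is inaccessible and $V_\lambda\satisfies T^\Plus$. Since $T^\Plus$ asserts there is a largest inaccessible cardinal, call it $\mu<\lambda$ with $V_\mu$ having theory $T$ and being the unique such level below $\lambda$. I need to conclude $\mu=\kappa$ and $\lambda=\kappa^\Plus$. The point is that $\mu$ is genuinely inaccessible in $V$ (inaccessibility is absolute to $V_\lambda$ for these purposes, by Zermelo's analysis), and $V_\mu$ genuinely has theory $T$. If $\mu<\kappa$, this contradicts freshness of $\kappa$, since then $T$ would already appear at the inaccessible $\mu<\kappa$; if $\mu>\kappa$, then $\kappa<\mu<\lambda$ are two inaccessible cardinals below $\lambda$ both of whose levels have theory $T$ (namely $V_\kappa$ and $V_\mu$), contradicting the uniqueness clause. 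Hence $\mu=\kappa$. Then $\lambda$ is an inaccessible cardinal above $\kappa$ with no inaccessible strictly between $\kappa$ and $\lambda$ (since $\mu=\kappa$ is the largest inaccessible below $\lambda$), so $\lambda=\kappa^\Plus$, giving $V_\lambda=V_{\kappa^\Plus}$ as required.

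The one subtlety to handle carefully---and the step I expect to be the main obstacle---is the expressibility of the clause ``$\kappa$ is the unique inaccessible $\delta$ below me with $\Th(V_\delta)=T$'' within the relevant logic. In the first-order case with $\kappa$ first-order theory categorical one would not even need freshness, but here $\kappa$ need only be fresh, so $T$ may recur above $\kappa$; what matters is only that $T$ not recur \emph{below} $\kappa$, which is precisely freshness, and the clause we need is only about levels below $\lambda$, all of which in the intended model lie at or below $\kappa$. When $\kappa$ is merely $\alpha$-order fresh for an absolutely definable ordinal $\alpha$, the theory $T$ lives in $V_{\kappa+\alpha}$, but since $\alpha$ is absolutely definable the assertion ``$V_\delta$ (equivalently, $V_{\delta+\alpha}$) realizes exactly the $\alpha$-order sentences in $T$'' remains a first-order property of $\delta$ expressible over any inaccessible $V_\lambda$ with $\lambda$ sufficiently large, in particular over $V_{\kappa^\Plus}$ and over the $V_\lambda$ we test; this is exactly the same bookkeeping used in the remark following Theorem~\ref{Theorem.Kappa^Plus-sententially-categorical} and in the main definition's discussion of $\Sigma^\alpha_n$ categoricity. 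Once that expressibility is in hand, the argument is just the freshness dichotomy above, and the complexity of the resulting characterizing theory $T^\Plus$ is again $\Delta_3$ over the base, for the same reasons noted after Theorem~\ref{Theorem.Kappa^Plus-sententially-categorical}.
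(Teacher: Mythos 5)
Your proposal is correct and follows essentially the same route as the paper's proof: characterize $V_{\kappa^\Plus}$ by the first-order assertions that there is a largest inaccessible cardinal, that its level satisfies each sentence of $T$ (expressible since set-level truth, including $\alpha$-order truth via $V_{\kappa+\alpha}$, is first-order definable in $V_{\kappa^\Plus}$), and that no other inaccessible level shares that theory---your uniqueness-below-$\lambda$ clause being equivalent to the paper's freshness clause. The extra verification and expressibility bookkeeping you supply is sound and merely spells out what the paper leaves implicit.
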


\begin{proof}
Suppose that $\kappa$ is fresh, in that the theory of $V_\kappa$ arises for the first time at an inaccessible cardinal right at $\kappa$ itself. Since $\kappa$ is definable in $V_{\kappa^\Plus}$ as the largest inaccessible cardinal, and it's theory is as well (including not only its first and  second-order theories, but $\alpha$-order for any absolutely definable ordinal $\alpha$), and so the assertion that any particular  sentence $\sigma$ of that order holds in $V_\kappa$ is a first-order assertion in $V_{\kappa^\Plus}$. Thus, $V_{\kappa^\Plus}$ will be characterized by the theory $\ZFC_2$ plus the first-order assertions that there is a largest inaccessible cardinal, that it is fresh and that has the particular theory that it has. This is a first-order theory characterization of $\kappa^\Plus$, and so $\kappa^\Plus$ is first-order theory categorical, as desired.
\end{proof}

\begin{theorem}\label{Theorem.Limits-of-sententially-categorical-are-theory-categorical}
If $\kappa$ is inaccessible and the sententially categorical cardinals are unbounded in the inaccessible cardinals below $\kappa$, then $\kappa$ is first-order theory categorical.
\end{theorem}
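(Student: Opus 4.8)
The plan is to show that the full first-order theory $T=\Th(V_\kappa)$ of $V_\kappa$ is itself a categorical characterization of $V_\kappa$ over $\ZFC_2$. By Zermelo's theorem (Theorem~\ref{Theorem.Zermelo-quasi-categoricity}) any model of $\ZFC_2$ is isomorphic to some $V_\lambda$ with $\lambda$ inaccessible, and $V_\lambda\models T$ forces $V_\lambda\equiv V_\kappa$ by completeness of $T$; so it suffices to prove that $V_\kappa$ is \emph{elementarily rigid} among the inaccessible levels, meaning $V_\lambda\not\equiv V_\kappa$ for every inaccessible $\lambda\neq\kappa$ — equivalently, that $\kappa$ is fresh for its first-order theory and moreover that no \emph{larger} inaccessible level shares that theory. (Conversely, if $\kappa$ were first-order theory categorical by any theory at all, then $V_\kappa$ would have to be elementarily rigid in this sense, so passing to the full theory costs nothing.)

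The heart of the argument is a lemma: \emph{if $\theta$ is inaccessible and the sententially categorical cardinals are unbounded in the inaccessibles below $\theta$, then $\theta$ is fresh}. To prove it, suppose $\mu<\theta$ is inaccessible with $V_\mu\equiv V_\theta$; apply the unboundedness hypothesis at $\delta=\mu$ to obtain a sententially categorical $\gamma$ with $\mu\le\gamma<\theta$, characterized by a sentence $\sigma_\gamma$ that is true in $V_\gamma$ and in no other inaccessible level of $V$. If $\gamma=\mu$, then $V_\theta\equiv V_\mu\models\sigma_\gamma$ while $\theta\neq\mu$ is inaccessible, contradicting the categoricity of $\sigma_\gamma$. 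If $\gamma>\mu$, then $V_\theta$ satisfies the first-order sentence ``there is an inaccessible $\nu$ with $V_\nu\models\sigma_\gamma$'' (witnessed by $\gamma<\theta$), hence so does $V_\mu$, producing an inaccessible $\nu<\mu$ with $V_\nu\models\sigma_\gamma$; but the only inaccessible level of $V$ satisfying $\sigma_\gamma$ is $V_\gamma$, and $\gamma>\mu$, a contradiction. Here I use that $V_\mu$ correctly computes inaccessibility and the $V_\nu$-hierarchy below $\mu$, and that satisfaction in set-sized structures is first-order definable.

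Next I would observe that $\Phi:=$ ``the sententially categorical cardinals are unbounded in the inaccessible cardinals'' is a single first-order sentence, since being sententially categorical is first-order — quantify over Gödel codes of sentences, using the definable satisfaction predicate for set structures, and over inaccessible levels. By Theorem~\ref{Theorem.Sentential-categoricity-absolute-to-Vtheta}, for inaccessible $\theta$ we have $V_\theta\models\Phi$ exactly when the genuine sententially categorical cardinals are unbounded in the inaccessibles below $\theta$; the substantive (upward) direction of that theorem is precisely what lets $V_\theta$'s internal sentential-categoricity judgements be cashed out externally. In particular the hypothesis gives $V_\kappa\models\Phi$. Now if $\kappa$ were not first-order theory categorical there would be an inaccessible $\lambda\neq\kappa$ with $V_\lambda\equiv V_\kappa$; then $V_\lambda\models\Phi$ as well, so $\lambda$ too meets the hypothesis of the lemma, and the lemma applied to both $\kappa$ and $\lambda$ says each is fresh. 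But $\kappa\neq\lambda$ and $V_\kappa\equiv V_\lambda$ forces either $\lambda<\kappa$, contradicting freshness of $\kappa$, or $\lambda>\kappa$, contradicting freshness of $\lambda$. This contradiction completes the proof.

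I expect the main friction to be the bookkeeping of expressibility and absoluteness rather than any deep obstacle: pinning down that ``$\gamma$ is sententially categorical'' is uniformly first-order over the relevant $V_\theta$, so that $\Phi$ is a legitimate sentence preserved under $\equiv$, and checking that Theorem~\ref{Theorem.Sentential-categoricity-absolute-to-Vtheta} is invoked in the direction that converts $V_\lambda\models\Phi$ into genuine sententially categorical cardinals cofinal below $\lambda$. The argument is insensitive to order: replacing ``first-order'' by ``second-order'' sentential categoricity throughout works verbatim (second-order satisfaction in set structures is still first-order definable, and the cited absoluteness theorem covers both orders), and in every case the conclusion obtained is first-order theory categoricity of $\kappa$.
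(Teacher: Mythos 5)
Your overall route is essentially the paper's: you show that the full first-order theory $\Th(V_\kappa)$ is itself the categorical characterization, you exclude smaller inaccessible levels by means of the characterizing sentences of the cofinally many sententially categorical cardinals below $\kappa$, and you exclude larger levels by observing that the unboundedness hypothesis is itself first-order expressible (your $\Phi$, legitimized by Theorem~\ref{Theorem.Sentential-categoricity-absolute-to-Vtheta}) and hence part of the shared theory; the paper handles the larger case by a direct two-case split (either a new sententially categorical cardinal appears in $[\kappa,\theta)$, or unboundedness fails below $\theta$), whereas you transfer $\Phi$ along $\equiv$ and reapply your freshness lemma at $\lambda$---same ingredients, slightly tidier packaging.

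One step fails as literally written. In the lemma, in the branch $\gamma=\mu$ you conclude $V_\theta\models\sigma_\gamma$ from $V_\theta\equiv V_\mu$ and $V_\mu\models\sigma_\gamma$. But $\equiv$ here is only \emph{first-order} elementary equivalence, while the hypothesis of the theorem allows the cofinal sententially categorical cardinals to be second-order sententially categorical, in which case $\sigma_\gamma$ may be a second-order sentence that need not transfer along $\equiv$ (and such $\gamma$ need not admit any first-order characterization---the least Mahlo cardinal is the paper's example). The repair is immediate and is exactly your $\gamma>\mu$ argument applied with witness $\gamma=\mu$: the assertion ``there is an inaccessible $\nu$ with $V_\nu\models\sigma_\gamma$'' is a \emph{first-order} sentence about the ambient model (second-order satisfaction over a set level $V_\nu$ is first-order definable, and computed correctly in $V_\mu$ and $V_\theta$ since they are correct about $V_{\nu+1}$); it is true in $V_\theta$, hence true in $V_\mu$, producing an inaccessible $\nu<\mu=\gamma$ with $V_\nu\models\sigma_\gamma$, contradicting the uniqueness given by sentential categoricity of $\gamma$. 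Indeed the case split on $\gamma=\mu$ versus $\gamma>\mu$ is unnecessary: this single argument covers both. With that repair your proof is correct, including its extension to the second-order reading of the hypothesis.
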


\begin{proof}
Note that we are not assuming that $\kappa$ is a limit of inaccessible cardinals. The first-order theory of $V_\kappa$ includes the assertions that those smaller inaccessible cardinals satisfy the sentences that characterize them, whether those are first or second order. Therefore no smaller inaccessible cardinal $\delta<\kappa$ can have $V_\delta$ with the same theory as $V_\kappa$. And no larger $\theta>\kappa$ can have the same theory either, since in $V_\theta$ either there are new sententially categorical cardinals, and the assertion that they exist would be part of the theory of $V_\theta$ not true in $V_\kappa$, or else the sententially categorical cardinals will not be unbounded in the inaccessible cardinals below $\theta$, which will itself be a statement true in $V_\theta$ that is not true in $V_\kappa$.
\end{proof}

%
%

\begin{theorem}\label{Theorem.Mahlo-not-first-order-theory-categorical}
No Mahlo cardinal is first-order theory categorical.
\end{theorem}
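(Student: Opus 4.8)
The plan is to show that whenever $\kappa$ is Mahlo there is a strictly smaller inaccessible cardinal $\delta<\kappa$ with $V_\delta\equiv V_\kappa$, that is, sharing the same first-order theory. This suffices to defeat first-order theory categoricity: for any first-order theory $T$ with $V_\kappa\satisfies\ZFC_2+T$ we then also have $V_\delta\satisfies\ZFC_2+T$, since every sentence of $T$ lies in $\Th(V_\kappa)=\Th(V_\delta)$ and $V_\delta\satisfies\ZFC_2$ by Zermelo's theorem~\ref{Theorem.Zermelo-quasi-categoricity}, while $V_\delta\not\cong V_\kappa$ because they have different ordinal heights. Hence no such $T$ can be a categorical characterization of $V_\kappa$.

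To produce $\delta$ I would apply the Montague--\Levy\ reflection theorem inside the structure $\<V_\kappa,\in>$, which is legitimate because $\kappa$ is inaccessible, so $\kappa$ is regular and $\Card{V_\alpha}<\kappa$ for every $\alpha<\kappa$. For each natural number $n$ this yields a club $C_n\subseteq\kappa$ consisting of the ordinals $\alpha<\kappa$ with $V_\alpha\prec_{\Sigma_n}V_\kappa$: it is unbounded by the usual Tarski--Vaught closing-off construction (using regularity and the strong limit property to keep the construction below $\kappa$) and closed because $\Sigma_n$-elementarity passes to unions of chains. Since $\kappa$ is regular and uncountable, the intersection $C=\bigcap_{n<\omega}C_n$ is again a club in $\kappa$, and every $\alpha\in C$ satisfies $V_\alpha\prec V_\kappa$ in the full first-order sense, in particular $\Th(V_\alpha)=\Th(V_\kappa)$.

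Now the Mahlo hypothesis enters: the set of inaccessible cardinals below $\kappa$ is stationary in $\kappa$, so it meets the club $C$. Choosing an inaccessible $\delta\in C$ with $\delta<\kappa$ completes the argument by the reduction in the first paragraph. (One can note in passing that this makes failure of first-order theory categoricity a weak consequence of Mahloness, complementing Theorem~\ref{Theorem.Non-theory-categorical-cardinals-are-reflecting}.)

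The only step I expect to require genuine care is the reflection step: one must verify that each set $C_n$ of $\Sigma_n$-correct levels is closed as well as unbounded in $\kappa$, so that the countable intersection $C$ remains a club and therefore necessarily meets the stationary set of inaccessibles furnished by Mahloness. Merely knowing that the reflecting levels are unbounded below $\kappa$ would not obviously let us land on an inaccessible one, and it is exactly this stationary-versus-club interaction that makes Mahloness the right hypothesis.
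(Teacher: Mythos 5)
Your proposal is correct and follows essentially the same route as the paper: the paper's proof likewise observes that for Mahlo $\kappa$ the set of $\delta<\kappa$ with $V_\delta\elesub V_\kappa$ is club in $\kappa$ and hence contains inaccessible cardinals, so $V_\kappa$ cannot be pinned down by any first-order theory. Your write-up simply supplies the standard details (the clubs $C_n$ of $\Sigma_n$-correct levels, their countable intersection, and the stationarity of the inaccessibles) that the paper leaves implicit.
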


\begin{proof}
If $\kappa$ is Mahlo, then $V_\delta\elesub V_\kappa$ for a closed unbounded set of $\delta$, which therefore includes many inaccessible cardinals. So $V_\kappa$ is not characterized by any first-order sentence or theory.
\end{proof}

\begin{theorem}\label{Theorem.Least-Mahlo}
The least Mahlo cardinal is second-order sententially categorical, but not first-order theory categorical.
\end{theorem}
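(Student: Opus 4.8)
The plan is to dispatch the negative half at once and then exhibit an explicit second-order sentence for the positive half. For the negative half, the least Mahlo cardinal is in particular Mahlo, so Theorem~\ref{Theorem.Mahlo-not-first-order-theory-categorical} already yields that it is not first-order theory categorical; nothing more is required.

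For the positive half, the key point is that Mahloness is a second-order expressible property of a model of $\ZFC_2$. I would take ``$\Ord$ is Mahlo'' to be the $\Pi^1_1$ sentence asserting that every club class of ordinals contains an inaccessible cardinal, spelling out ``club'' first-order as unbounded and closed and using the first-order definition of inaccessibility. The first thing to check is that, for an inaccessible $\kappa$, the structure $V_\kappa$ satisfies this sentence exactly when $\kappa$ is Mahlo: since $V_\kappa\models\ZFC_2$ it is a transitive rank-initial segment that correctly computes power sets, club subsets of $\kappa$, and inaccessibility below $\kappa$, so the class quantifier ranges over precisely the subsets of $\kappa$ and the sentence reduces to the assertion that the inaccessible cardinals are stationary in $\kappa$. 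By the same absoluteness, for a set $\delta<\kappa$ the property ``$\delta$ is Mahlo'' is first-order over $V_\kappa$ and holds there iff $\delta$ is genuinely Mahlo, so ``there is no Mahlo cardinal'' is (even first-order) expressible over $V_\kappa$.

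Let $\sigma$ be the second-order sentence ``$\Ord$ is Mahlo, and there is no Mahlo cardinal,'' and let $\kappa$ be the least Mahlo cardinal. The remaining work is a short case analysis. First, $V_\kappa\models\sigma$: the first conjunct holds because $\kappa$ is Mahlo, and the second holds because, $\kappa$ being least Mahlo, there are no Mahlo cardinals below it and $V_\kappa$ recognizes this. Conversely, any inaccessible $\lambda\neq\kappa$ fails $\sigma$: if $\lambda<\kappa$ then $\lambda$ is not Mahlo, so the first conjunct fails in $V_\lambda$; if $\lambda>\kappa$ then $\kappa$ is a Mahlo cardinal below $\lambda$ which $V_\lambda$ recognizes, so the second conjunct fails in $V_\lambda$. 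Together with Zermelo's Theorem~\ref{Theorem.Zermelo-quasi-categoricity}, which guarantees the models of $\ZFC_2$ are exactly the $V_\lambda$ for inaccessible $\lambda$, this shows $\ZFC_2+\sigma$ categorically characterizes $V_\kappa$, so $\kappa$ is second-order sententially categorical.

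The only step needing care is the absoluteness bookkeeping in the second paragraph---verifying that ``$\Ord$ is Mahlo'' interpreted in $V_\kappa$ genuinely captures stationarity of the inaccessibles in $\kappa$, and that Mahloness of smaller ordinals is computed correctly there. But this is exactly the sort of correctness about power sets and rank-initial segments that already powers Zermelo's theorem, so I expect no real obstacle; everything else is the routine split on whether a competing inaccessible lies below or above $\kappa$.
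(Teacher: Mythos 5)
Your proof is correct and follows essentially the same route as the paper: the negative half by citing Theorem~\ref{Theorem.Mahlo-not-first-order-theory-categorical}, and the positive half by expressing Mahloness as a $\Pi^1_1$ sentence and conjoining it with the first-order assertion that no smaller cardinal is Mahlo. You merely spell out the absoluteness bookkeeping (correctness of $V_\kappa$ about clubs, power sets, and Mahloness below $\kappa$) that the paper leaves implicit, which is harmless.
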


\begin{proof}
Being Mahlo is a $\Pi^1_1$ property: every club $C\of\kappa$ has a regular cardinal. So the least one is second-order sententially categorical. But no Mahlo cardinal is first-order sententially or theory categorical by the above.
\end{proof}

We may consider versions of theorem \ref{Theorem.Mahlo-not-first-order-theory-categorical} for weaker large cardinal notions. An ordinal $\kappa$ is \emph{otherworldly} if $V_\kappa\elesub V_\beta$ for some ordinal $\beta>\kappa$. (It is an interesting elementary exercise to show that otherworldly cardinals are also \emph{worldly}, which means $V_\kappa\satisfies\ZFC$, but note that otherworldly cardinals, like the worldly cardinals, need not be regular and hence are not necessarily inaccessible, although they are strong limit cardinals and $\beth$-fixed points.) Every inaccessible cardinal $\delta$ is the limit of a closed unbounded set of otherworldly cardinals, each of which is otherworldly up to $\delta$, meaning that the targets $\beta$ can be found cofinally in $\delta$. A cardinal $\kappa$ is \emph{totally otherworldly} if $V_\kappa\elesub V_\beta$ for arbitrarily large ordinals $\beta$. Recall from \cite{HamkinsJohnstone2014:ResurrectionAxiomsAndUpliftingCardinals} that a cardinal $\kappa$ is \emph{uplifting} if it is inaccessible and $V_\kappa\elesub V_\beta$ for arbitrarily large inaccessible cardinals $\beta$; the cardinal $\kappa$ is \emph{pseudo-uplifting} if $V_\kappa\elesub V_\beta$ for arbitrarily large $\beta$, without insisting that $\beta$ is inaccessible. So the pseudo-uplifting cardinals are the same as the inaccessible totally otherworldly cardinals. All of these cardinals are weaker in consistency strength than a Mahlo cardinal, since if $\delta$ is Mahlo, then $V_\delta$ has a proper class of uplifting cardinals, which are therefore also pseudo-uplifting and inaccessibly totally otherworldly. None of these types of cardinals, it turns out, can be second-order theory categorical, in light of the following theorem.\goodbreak

\begin{theorem}\label{Theorem.Extensions-not-second-order-theory-categorical}
If $V_\kappa\elesub M$ for some transitive set $M$ with $V_{\kappa+1}\of M$, then $\kappa$ is neither first nor second-order theory categorical.
\end{theorem}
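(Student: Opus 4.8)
The plan is to manufacture, inside $V_\kappa$ itself, a genuine inaccessible $\lambda<\kappa$ for which $\<V_\lambda,\in>$ has the same second-order theory as $\<V_\kappa,\in>$. Since $\lambda\ne\kappa$, this second, non-isomorphic model refutes categoricity of every candidate theory, whether first- or second-order, and so kills both forms of theory categoricity at once.

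First I would dispose of the degenerate case: if $\kappa$ is not inaccessible then $V_\kappa\not\models\ZFC_2$ and $\kappa$ is not theory categorical for trivial reasons, so I assume $V_\kappa\models\ZFC_2$; then $V_\kappa\models\ZFC$, and since $V_\kappa\elesub M$ the transitive set $M$ is itself a model of $\ZFC$. Next I would record three consequences of transitivity together with $V_{\kappa+1}\of M$: (i) every $\alpha<\kappa$ has rank $\alpha$, so $\kappa\of V_\kappa$ and hence $\kappa\in P(V_\kappa)=V_{\kappa+1}\of M$; (ii) $\kappa$ is inaccessible in $M$, because the regularity and strong-limit-ness of $\kappa$ are witnessed or refuted only by objects of rank at most $\kappa$ (a cofinal map $\alpha\to\kappa$ or a bijection $P(\alpha)\to\beta$ for $\alpha,\beta<\kappa$ all live in $V_{\kappa+1}$), all of which sit in $M$ exactly as in $V$; (iii) $M$ computes $P(V_\kappa)$, indeed $P(V_\kappa^n)\of P(V_\kappa)$ for each finite $n$, correctly, and therefore $M$ computes the full second-order theory of $\<V_\kappa,\in>$ correctly, since any model of $\ZFC$ that possesses a set-sized structure together with its (correct) power set can form the second-order satisfaction relation of that structure and hence its second-order theory.

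The key step is then a single application of elementarity. Let $T$ be the second-order theory of $\<V_\kappa,\in>$; as $T$ is a set of (codes of) sentences, $T\in V_{\omega+1}\of V_\kappa$, so $T$ is a legitimate parameter for $V_\kappa\elesub M$. Let $\varphi(x)$ be the first-order assertion ``there is an inaccessible cardinal $\gamma$ such that the second-order theory of $\<V_\gamma,\in>$ equals $x$''; this really is first-order, since asking whether a set-sized structure satisfies a given second-order sentence is a first-order query about that structure and its power set. By facts (ii) and (iii), $M\models\varphi(T)$, witnessed by $\gamma=\kappa$; hence $V_\kappa\models\varphi(T)$. The witness it supplies is some $\lambda<\kappa$ that $V_\kappa$ believes to be inaccessible with $\<V_\lambda,\in>$ of second-order theory $T$ — and since $\lambda<\kappa$, with $V_{\lambda+1}\in V_\kappa$, these judgements are correct in $V$. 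Thus $\lambda$ and $\kappa$ are distinct inaccessible cardinals with $V_\lambda$ and $V_\kappa$ having the same second-order (hence also the same first-order) theory $T\supseteq\ZFC_2$, so $V_\lambda\models\ZFC_2+T'$ for every first- or second-order $T'$ true in $V_\kappa$; every such $\ZFC_2+T'$ therefore has a second, non-isomorphic model, and $\kappa$ is neither first- nor second-order theory categorical.

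The only real friction I anticipate is the absoluteness bookkeeping in (ii) and (iii): one must check carefully that $M$ holds the \emph{correct} power sets $P(V_\gamma)$ for the relevant $\gamma$ — which is precisely what $V_{\kappa+1}\of M$ plus transitivity buys — and that the second-order satisfaction relation of a set structure, although its rank is a bit above $\kappa$, is nonetheless constructible inside any model of $\ZFC$ that sees the structure and its power set, so that its computation is absolute between such an $M$ and $V$. Everything else — the elementarity step, the downward correctness of $V_\kappa$ at levels below $\kappa$, and the final counting of models — is routine.
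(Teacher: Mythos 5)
Your argument is correct and is essentially the paper's own proof: let $T$ be the second-order theory of $V_\kappa$, note that $M$ (correctly, since $V_{\kappa+1}\of M$ and $M$ is transitive) satisfies ``some inaccessible $\gamma$ has $V_\gamma$ of second-order theory $T$'' as witnessed by $\kappa$, and pull this back by elementarity with parameter $T\in V_\kappa$ to get an inaccessible $\lambda<\kappa$ with $V_\lambda\equiv_2 V_\kappa$, defeating both first- and second-order theory categoricity. Your additional absoluteness bookkeeping (facts (i)--(iii) and the correctness of the witness below $\kappa$) just makes explicit what the paper leaves implicit.
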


\begin{proof}
Assume $V_\kappa\elesub M$ for a transitive set $M$ with $V_{\kappa+1}\of M$. Let $T$ be the second-order theory of $V_\kappa$, and observe that $M$ thinks ``there is an inaccessible cardinal $\delta$ such that the second-order theory of $V_\delta$ is $T$,'' since this is true in $M$ of $\delta=\kappa$. So this statement must also be true in $V_\kappa$, and so there is $\delta<\kappa$ with $V_\delta$ having the same theory as $V_\kappa$. So $\kappa$ is not second-order theory categorical.
\end{proof}

It follows that no measurable cardinal is second-order theory categorical, nor is any $(\kappa+1)$-strongly unfoldable cardinal $\kappa$, nor any $\Pi^2_1$-indescribable cardinal, nor any uplifting cardinal, nor any otherworldly cardinal, since all these types of large cardinals satisfy the comparatively weak hypothesis of theorem \ref{Theorem.Extensions-not-second-order-theory-categorical}.

\section{The limits of categoricity}

Let us now analyze the limits and supremum of the various kinds of categorical cardinals. We define that an ordinal $\delta$ is (lightface) \emph{$\Sigma_2$-correct}, if $V_\delta$ has the same $\Sigma_2$ theory as $V$. Generalizing this, $\delta$ is \emph{$\Sigma_2(\R)$-correct}, if $V_\delta$ satisfies the same $\Sigma_2$ assertions as $V$ with real parameters; and $\delta$ is (fully) \emph{$\undertilde\Sigma_2$}-correct, if $V_\delta\elesub_{\Sigma_2} V$, meaning that $V_\delta$ has the same $\Sigma_2$ theory as $V$ with arbitrary parameters from $V_\delta$.

The $\Sigma_2$ assertions are exactly those that can be locally verified in some rank-initial segment of the universe $V_\beta$. (The first author gives an elementary account in \cite{Hamkins.blog2014:Local-properties-in-set-theory}, which may be helpful for some readers.) The least lightface $\Sigma_2$-correct cardinal, therefore, is simply the supremum of the ordinals $\beta$ at which new $\Sigma_2$ facts first become true.

\begin{theorem}\label{Theorem.Correctness-relations}\
 \begin{enumerate}
   \item Every sententially categorical cardinal is less than every lightface $\Sigma_2$-correct cardinal.
   \item Every theory categorical cardinal and indeed any fresh cardinal is less than every $\Sigma_2(\R)$-correct cardinal.
   \item The least lightface $\Sigma_2$-correct cardinal is strictly less than the least $\Sigma_2(\R)$-correct cardinal, which is strictly less than the least $\undertilde\Sigma_2$-correct cardinal.
 \end{enumerate}
\end{theorem}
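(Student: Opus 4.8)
The plan is to lean throughout on the characterization of the $\Sigma_2$ assertions as exactly those that are \emph{locally verifiable} --- true in $V$ just in case witnessed inside some rank-initial segment $V_\beta$ --- so that a $\Sigma_2$ assertion $\phi(p)$ with $p\in V_\alpha$ is upward absolute from $V_\alpha$ to $V$, and a cardinal $\gamma$ is $\Sigma_2$-correct (lightface, with real parameters, or with arbitrary parameters from $V_\gamma$, as the case may be) precisely when every such $\Sigma_2$ truth of $V$ is already witnessed below $\gamma$. For part (1), let $\kappa$ be sententially categorical via the sentence $\sigma$, first- or second-order. The assertion ``there is an inaccessible cardinal $\delta$ with $V_\delta\satisfies\sigma$'' is $\Sigma_2$: it is witnessed by exhibiting a single $V_\beta$ --- with $\beta$ just above $\delta$, large enough to contain the satisfaction relation of $V_\delta$, even the second-order one, which already lives in $V_{\delta+2}$ --- inside which both the inaccessibility of $\delta$ and $V_\delta\satisfies\sigma$ are checked. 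It is true in $V$, witnessed by $\kappa$, so if $\gamma$ is lightface $\Sigma_2$-correct it holds in $V_\gamma$, giving an inaccessible $\delta<\gamma$ with $V_\delta\satisfies\sigma$; by categoricity $\delta=\kappa$, so $\kappa<\gamma$. Part (2) is the same argument one level up: if $\kappa$ is fresh, first- or second-order, let $T\of\omega$ be the corresponding theory of $V_\kappa$, a real lying in $V_\gamma$ whenever $\gamma$ is $\Sigma_2(\R)$-correct. The assertion ``there is an inaccessible $\delta$ whose (first- or second-order) theory equals $T$'' is $\Sigma_2$ in the parameter $T$ --- again locally verified in a $V_\beta$ with $\beta$ just past $\delta$ --- and true in $V$ via $\kappa$, so it holds in $V_\gamma$, yielding an inaccessible $\delta<\gamma$ with the same theory as $V_\kappa$; freshness forbids $\delta<\kappa$, so $\kappa\le\delta<\gamma$. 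Since every theory categorical cardinal is fresh, this disposes of that case as well.

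For part (3) the three inequalities $\le$ are immediate from the inclusions among the correctness classes --- a $\Sigma_2(\R)$-correct cardinal is lightface $\Sigma_2$-correct, and an $\undertilde\Sigma_2$-correct cardinal is $\Sigma_2(\R)$-correct --- so all the content is in the strictness, which I would extract by a self-referential diagonalization. Let $\gamma_0$ be the least lightface $\Sigma_2$-correct cardinal and let $t\of\omega$ be the real coding the lightface $\Sigma_2$-theory of $V$. The assertion $\psi(t)$, ``there is a $\beta$ such that the $\Sigma_2$-theory of $V_\beta$ equals $t$,'' is $\Sigma_2$ in $t$ --- witnessed by exhibiting such a $V_\beta$ together with its satisfaction relation --- and it is true in $V$, witnessed precisely by the $\Sigma_2$-correct ordinals, the least of which is $\gamma_0$; hence $\psi(t)$ is not witnessed below $\gamma_0$, so $V_{\gamma_0}\not\satisfies\psi(t)$ even though $t\in V_{\gamma_0}$. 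Thus $\gamma_0$ is not $\Sigma_2(\R)$-correct, and the least $\Sigma_2(\R)$-correct cardinal lies strictly above it. The second strictness is identical with ``real parameter'' upgraded to ``parameter from $V_\delta$'': letting $\gamma_1$ be the least $\Sigma_2(\R)$-correct cardinal and $P\in V_{\gamma_1}$ the set coding the $\Sigma_2$-with-real-parameters theory of $V$, the assertion ``there is a $\beta$ whose $\Sigma_2$-with-real-parameters theory equals $P$'' is $\Sigma_2$ in $P$, true in $V$ with least witness $\gamma_1$, hence false in $V_{\gamma_1}$, so $\gamma_1$ is not $\undertilde\Sigma_2$-correct and the least $\undertilde\Sigma_2$-correct cardinal exceeds it.

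The routine bookkeeping is the complexity accounting --- that ``$V_\beta$ has a prescribed theory (first- or second-order, lightface or in given parameters)'' is locally verifiable, since the relevant satisfaction relation on the set $V_\beta$ is definable over $V_\beta$ (second-order satisfaction over $V_{\beta+1}$) and the match with the coding parameter is arithmetic, so that prefixing ``there is such a $V_\beta$'' keeps the assertion $\Sigma_2$ --- together with the \Levy\ characterization of $\Sigma_2$ underlying the upward absoluteness. The one genuinely delicate point, and the conceptual heart of part (3), is the self-referential choice of parameter in the two strictness arguments: by handing $V_\gamma$ a code for $V$'s own theory at the relevant level, the meta-statement ``I compute the correct theory here'' becomes object-level expressible, and its least witness is exactly the ordinal under consideration, so that ordinal can never verify the statement about itself --- which is precisely what forces the next correctness level strictly higher.
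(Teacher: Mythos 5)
Your proposal is correct and takes essentially the same route as the paper: parts (1) and (2) use the very same locally-verifiable $\Sigma_2$ assertions (``some inaccessible $V_\delta$ satisfies $\sigma$,'' ``some inaccessible $V_\delta$ has theory $T$'' with $T$ coded as a real), and part (3) is the paper's own diagonalization taking the $\Sigma_2$ (respectively $\Sigma_2(\R)$) theory of $V$ as the parameter. The only difference is packaging---you show the least weaker-correct cardinal fails the stronger correctness and invoke the trivial inclusion of parameter classes, where the paper argues directly that every stronger-correct cardinal lies above (or sees below it) a weaker-correct one---which is inessential.
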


\begin{proof}
If $\kappa$ is sententially characterized by sentence $\sigma$, then the assertion that there is an inaccessible cardinal $\kappa$ for which $V_\kappa\satisfies\sigma$ is itself a $\Sigma_2$ assertion in $V$, and so every sententially categorical cardinal (in any absolutely definable order) is less than every lightface $\Sigma_2$-correct cardinal.

Similarly, if $\kappa$ is theory categorical (in some absolutely definable order $\alpha$), then $V_\kappa$ is characterized by having its theory, and so the assertion that there is an inaccessible cardinal at which that theory becomes true is a new $\Sigma_2$ statement about that theory that first becomes true above $\kappa$. Since the theory can be coded by a real number, $\kappa$ must be less than every $\Sigma_2(\R)$-correct cardinal. An essentially similar argument works with fresh cardinals.

If $\xi$ is the least lightface $\Sigma_2$-correct cardinal, then $\xi$ is the first stage at which all the $\Sigma_2$ sentences that will ever become true have already become true, so the $\Sigma_2$ theory of $V_\xi$ is the same as $V$. If $t$ is the $\Sigma_2$ theory of $V_\xi$, then $\xi$ is the first time that this theory is realized in a rank-initial segment of the universe, and this is a $\Sigma_2$ property about the parameter $t$. So $\xi$ will be strictly less than the least $\Sigma_2(\R)$-correct cardinal $\theta$.

This cardinal $\theta$ in turn is strictly less, we claim, than the first $\undertilde\Sigma_2$-correct cardinal $\delta$. To see this, assume $V_\delta\elesub_{\Sigma_2} V$ and consider the $\Sigma_2(\R)$ theory of $V$, the set of $\Sigma_2$ assertions $\varphi(r)$ with real parameters that are true in $V$. These are also all true in $V_\delta$, by assumption. In $V$ therefore there is an ordinal stage, namely $V_\delta$, at which this theory becomes true. This is a $\Sigma_2$ fact about this theory, which is a set in $V_\delta$, which therefore must also be already true in $V_\delta$. So $\delta$ is not the first ordinal with the same $\Sigma_2(\R)$ theory as $V$, as claimed.
\end{proof}

It is a routine observation of large cardinal set theory that many of the familiar large cardinals are $\undertilde\Sigma_2$-correct. For example, every strong cardinal, every totally otherworldly cardinal, every strong cardinal, every supercompact cardinal, every extendible cardinal, and so on. The conclusion to be made from theorem \ref{Theorem.Correctness-relations}, therefore, is that every sententially categorical and theory categorical cardinal is below all of these large cardinals, instantiating once again the idea that categoricity is a smallness notion. Note that $\undertilde\Sigma_2$-correctness is not by itself a large cardinal notion, since the reflection theorem shows in \ZFC\ that for every $n$ there is a proper class club of $\undertilde\Sigma_n$-correct cardinals. Rather, one should see the correct cardinals as natural milestones of reflection in the ordinals, which exist inside any model of \ZFC.

Next, we show that the bound of theorem \ref{Theorem.Correctness-relations} statement (1) is optimal, because if there are a proper class of inaccessible cardinals, then the supremum of the sententially categorical cardinals is exactly the least lightface $\Sigma_2$-correct cardinal.

\begin{theorem}\label{Theorem.Supremum-sententially-categorical-cardials}
If there is no largest inaccessible cardinal, then the following ordinals are the same.
\begin{enumerate}
  \item The supremum of the first-order sententially categorical cardinals.
  \item The supremum of the second-order sententially categorical cardinals.
  \item The supremum of the $\alpha$-order sententially categorical cardinals, for any absolutely definable ordinal $\alpha$.
\end{enumerate}
The ordinals of statements 1, 2, and 3 are in any case less than or equal to the following ordinal.
\begin{enumerate}[resume]
  \item The first lightface $\Sigma_2$-correct cardinal.
\end{enumerate}
If there are a proper class of inaccessible cardinals, then all four ordinals are identical.
\end{theorem}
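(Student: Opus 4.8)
The plan is to establish the chain in three stages: first $(1)=(2)=(3)$; then that these are $\le(4)$; and finally, under the hypothesis of a proper class of inaccessibles, $(4)\le(1)$, which closes the circle. For $(1)=(2)=(3)$, note that every first-order sententially categorical cardinal is second-order sententially categorical and every second-order one is $\alpha$-order sententially categorical, since a first-order sentence categorically characterizing $V_\kappa$ over $\ZFC_2$ remains a categorical characterization when read as a sentence of a richer logic (its truth in the models $V_\mu$ of $\ZFC_2$ is unaffected); so $\sup(1)\le\sup(2)\le\sup(3)$. Conversely, if $\kappa$ is $\alpha$-order sententially categorical for an absolutely definable $\alpha$, then by Theorem~\ref{Theorem.Kappa^Plus-sententially-categorical} the cardinal $\kappa^\Plus$ is first-order sententially categorical, and $\kappa^\Plus$ exists and is $>\kappa$ because there is no largest inaccessible; hence $\sup(3)\le\sup(1)$, so all three are equal. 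The bound $(1),(2),(3)\le(4)$ is then exactly Theorem~\ref{Theorem.Correctness-relations}(1): every sententially categorical cardinal of any absolutely definable order is strictly below the least lightface $\Sigma_2$-correct cardinal $\xi$, so the common supremum is $\le\xi$.

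It remains to show that if there is a proper class of inaccessible cardinals then $\xi\le\sup(1)$, and since every first-order sententially categorical cardinal is $<\xi$, it suffices to prove these cardinals are unbounded below $\xi$. First I would observe that $\xi$ is a limit of inaccessibles. The assertion ``the inaccessible cardinals are unbounded in the ordinals'' is $\Pi_2$ — here one uses that ``$\kappa$ is inaccessible'' is $\Sigma_1$, since it asserts the existence of the $\Delta_1$-definable rank-initial segment $V_{\kappa+1}$ together with a satisfaction fact about it — it is true in $V$ by hypothesis, and $V_\xi$ has the same $\Pi_2$ theory as $V$ (same $\Sigma_2$ theory gives same $\Pi_2$ theory, by negation); since inaccessibility is absolute between $V_\xi$ and $V$ below $\xi$, the genuine inaccessibles are unbounded in $\xi$.

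Now fix $\beta<\xi$. Because $\xi$ is the supremum of the stages at which new $\Sigma_2$ facts first become true and $\beta<\xi$, some such first-appearance stage exceeds $\beta$: there is a first-order sentence $\theta$ with $V_\gamma\models\theta$ for some $\gamma$, such that the least such $\gamma$ — call it $\gamma_\theta$ — satisfies $\gamma_\theta\ge\beta$ (the local $\Sigma_2$ fact $\exists\gamma\,(V_\gamma\models\theta)$ first becomes true at stage $\gamma_\theta+1>\beta$) and $\gamma_\theta<\xi$ (that fact, being true in $V$, is true in $V_\xi$). Let $\lambda^\ast$ be the least inaccessible cardinal strictly above $\gamma_\theta$; then $\gamma_\theta<\lambda^\ast<\xi$ by the previous paragraph, and $\lambda^\ast>\beta$. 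I claim $\lambda^\ast$ is first-order sententially categorical, witnessed by
\[
\psi\ :=\ \bigl(\exists\gamma\,V_\gamma\models\theta\bigr)\ \wedge\ \neg\exists\ \text{inaccessible}\ \delta\ \bigl(V_\delta\models{}\text{``}\exists\gamma\,V_\gamma\models\theta\text{''}\bigr).
\]
Among models $V_\mu$ of $\ZFC_2$ the first conjunct says $\gamma_\theta<\mu$, i.e.\ $\mu\ge\lambda^\ast$; the second says no inaccessible $\delta<\mu$ has $\gamma_\theta<\delta$, which forces $\mu\le\lambda^\ast$, since otherwise $\delta=\lambda^\ast$ itself would be such a cardinal. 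Hence $V_{\lambda^\ast}$ is the unique model of $\ZFC_2+\psi$, so $\lambda^\ast$ is first-order sententially categorical; as $\beta<\xi$ was arbitrary this gives $\sup(1)\ge\xi$, and combined with the earlier bound all four ordinals equal $\xi$.

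The step I expect to require the most care is the bookkeeping around $\Sigma_2$-definability: verifying that ``$\kappa$ is inaccessible'' is $\Sigma_1$ and is absolute between $V_\xi$ and $V$ below $\xi$ (routine once one invokes the $\Delta_1$-definability of the cumulative hierarchy), and, crucially, that the supremum $\xi$ of first-appearance stages of $\Sigma_2$ facts is genuinely witnessed by sentences of the local form $\exists\gamma\,(V_\gamma\models\theta)$ — this is exactly the characterization of the $\Sigma_2$ assertions as the locally verifiable ones recalled just before the theorem, and it is what licenses the choice of $\theta$ with $\beta\le\gamma_\theta<\xi$ in the construction above.
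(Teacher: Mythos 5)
Your main line is exactly the paper's argument: the equality of (1), (2), (3) via Theorem~\ref{Theorem.Kappa^Plus-sententially-categorical} together with the hypothesis that $\kappa^\Plus$ always exists, the upper bound from Theorem~\ref{Theorem.Correctness-relations}(1), and, under a proper class of inaccessibles, the reverse inequality by observing that whenever a new $\Sigma_2$ fact first becomes true at stage $\gamma_\theta$, the least inaccessible $\lambda^\ast$ above $\gamma_\theta$ is categorically characterized by precisely the sentence $\psi$ you wrote down (this is the same device as in Theorem~\ref{Theorem.Sentential-categoricity-absolute-to-Vtheta}). That core is correct.

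There is, however, a genuine error in your auxiliary paragraph asserting that the inaccessible cardinals are unbounded below $\xi$. The statement ``$\kappa$ is inaccessible'' is $\Pi_1$, not $\Sigma_1$: were it $\Sigma_1$ it would be upward absolute from inner models, yet a cardinal inaccessible in $L$ can be singular (or even countable) in $V$; likewise ``$x=V_{\kappa+1}$'' is $\Pi_1$ but not $\Delta_1$, since otherwise $V_{\omega+1}^L$ would have to equal $V_{\omega+1}$. Consequently ``the inaccessible cardinals are unbounded in the ordinals'' is not $\Pi_2$: a true $\Pi_2$ sentence has the form $\forall\beta\,(V_\beta\models\varphi)$ and therefore holds in $V_{\kappa_0}$ for $\kappa_0$ the least inaccessible cardinal, a model of $\ZFC$ with no inaccessible cardinals at all, so unboundedness of the inaccessibles cannot be equivalent to a $\Pi_2$ sentence, and your appeal to $\Pi_2$-agreement between $V$ and $V_\xi$ fails. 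Fortunately the claim it was meant to support is superfluous: for $\sup(1)\geq\xi$ you need only $\lambda^\ast>\beta$, which you already have from $\lambda^\ast>\gamma_\theta\geq\beta$, and the existence of $\lambda^\ast$ uses only the proper class of inaccessibles; the further fact $\lambda^\ast<\xi$ is automatic afterwards from Theorem~\ref{Theorem.Correctness-relations} once $\lambda^\ast$ is shown sententially categorical. (If you do want inaccessibles unbounded in $\xi$ directly, note instead that for any true lightface $\Sigma_2$ sentence $\sigma$ the assertion ``there is an inaccessible $\kappa$ with $V_\kappa\models\sigma$'' is again a true lightface $\Sigma_2$ sentence, hence true in $V_\xi$.) With that paragraph deleted or repaired, your proof is correct and coincides with the paper's.
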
\goodbreak

\begin{proof}
Every first-order sententially categorical cardinal is of course also second-order and indeed $\alpha$-order sententially categorical, and conversely by theorem \ref{Theorem.Kappa^Plus-sententially-categorical} every such sententially categorical cardinal is smaller than some first-order sententially categorical cardinal, and so the suprema of these cardinals, as in statements 1, 2, and 3, are the same.

We have already proved in theorem \ref{Theorem.Correctness-relations} that the ordinals of statements 1, 2, and 3 are bounded above by the first lightface $\Sigma_2$-correct cardinal. To complete the proof, assume that there are a proper class of inaccessible cardinals. Whenever a new $\Sigma_2$ fact is verified in some $V_\beta$, therefore, then the first inaccessible cardinal $\kappa$ above $\beta$ is categorically characterized under this description, and so the supremum of the ordinals at which a new $\Sigma_2$ becomes true will be bounded by the supremum of the sententially categorical cardinals. In this case, therefore, all four suprema are identical.
\end{proof}

For the final conclusion, it would suffice that for every true $\Sigma_2$ sentence there is an inaccessible cardinal at or above the stage at which it first becomes verified, and this is a weaker assumption than a proper class of inaccessible cardinals.

One can interpret theorem \ref{Theorem.Supremum-sententially-categorical-cardials} as providing support for $\ZFC_2$ as a natural end-of-the-line with regard to characterizability power, since the first-order and second-order characterizations over $\ZFC_2$ are cofinal in each other.

Let us now mount the corresponding analysis for theory categorical cardinals.

\begin{theorem}\label{Theorem.Supremum-theory-categorical-cardials}
If there is no largest inaccessible cardinal, then the following ordinals are the same:
\begin{enumerate}
  \item The supremum of the first-order theory categorical cardinals.
  \item The supremum of the second-order theory categorical cardinals.
  \item The supremum of the $\alpha$-order theory categorical cardinals, for any absolutely definable ordinal $\alpha$.
  \item The supremum of the first-order fresh cardinals.
  \item The supremum of the second-order fresh cardinals.
  \item The supremum of the $\alpha$-order fresh cardinals, for any absolutely definable ordinal $\alpha$.
\end{enumerate}
Furthermore, if there are a proper class of inaccessible cardinals, then this supremum is strictly higher than the first lightface $\Sigma_2$-correct cardinal, yet bounded above by the first $\Sigma_2(\R)$-correct ordinal.
\end{theorem}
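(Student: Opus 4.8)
The plan is to pin all six ordinals to a single value, then read off the two ``furthermore'' bounds. Let $\lambda$ be the supremum of the first-order theory categorical cardinals. Every first-order theory categorical cardinal is $\alpha$-order theory categorical and is first-order---hence $\alpha$-order---fresh, so each of the six suprema in items~(1)--(6) is at least $\lambda$. For the reverse inequality, I would observe that every $\alpha$-order theory categorical cardinal is $\alpha$-order fresh (a categorical characterization over $\ZFC_2$ separates $V_\kappa$ from every smaller inaccessible $V_\delta$, whose theory then differs), and then apply theorem~\ref{Theorem.If-kappa-fresh-then-kappa-Plus-theory-categorical}: since there is no largest inaccessible cardinal, every $\alpha$-order fresh $\kappa$ has its successor inaccessible $\kappa^\Plus$ first-order theory categorical, so every cardinal counted in items~(1)--(6) lies strictly below $\lambda$. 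Hence all six suprema equal $\lambda$, and the upper bound is immediate: theorem~\ref{Theorem.Correctness-relations}(2) places every fresh cardinal strictly below the least $\Sigma_2(\R)$-correct ordinal.

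For the strict lower bound, assume a proper class of inaccessible cardinals and let $\xi$ be the least lightface $\Sigma_2$-correct cardinal. By theorem~\ref{Theorem.Supremum-sententially-categorical-cardials} the sententially categorical cardinals are cofinal in $\xi$, and each of them is fresh, so $\lambda\ge\xi$; the task is to make this strict. Let $\mu$ be the least inaccessible cardinal above $\xi$, which exists by hypothesis. I claim $\mu$ is first-order fresh, whence $\lambda\ge\mu>\xi$ and we are done.

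To prove the claim, suppose toward a contradiction that some inaccessible $\delta<\mu$ has the same first-order theory as $V_\mu$. Since $\mu$ is the least inaccessible above $\xi$, we have $\delta\le\xi$. Because $\mu\ge\xi$, the structure $V_\mu$ carries the full $\Sigma_2$ theory of $V$---the $\Sigma_2$ theory has already stabilized by stage $\xi$, as noted just before theorem~\ref{Theorem.Correctness-relations}---and therefore so does $V_\delta$; minimality of $\xi$ then forces $\delta=\xi$. But now $\xi$ itself is inaccessible, and the same observation shows that no inaccessible below $\xi$ can share $\Th(V_\xi)$, since it would be lightface $\Sigma_2$-correct below $\xi$; so $\xi$ is first-order fresh. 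Theorem~\ref{Theorem.If-kappa-fresh-then-kappa-Plus-theory-categorical} then makes $\mu=\xi^\Plus$ first-order theory categorical, hence first-order fresh, contradicting the choice of $\delta$.

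The main obstacle is exactly this last step: one must rule out the degenerate possibility that $\xi$ is itself inaccessible and $V_\mu$ merely duplicates $\Th(V_\xi)$, for then $\mu$ would not be fresh and the argument would break down. The resolution is the interplay between lightface $\Sigma_2$-correctness, which collapses any hypothetical witness $\delta$ onto $\xi$, and theorem~\ref{Theorem.If-kappa-fresh-then-kappa-Plus-theory-categorical}, which then forces $\mu=\xi^\Plus$ to be fresh after all. Everything else---the monotonicity of freshness and theory categoricity across the absolutely definable orders, and the standard behaviour of $\Sigma_2$-correct cardinals---is routine.
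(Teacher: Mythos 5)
Your proposal is correct, and its first half coincides with the paper's own argument: you equate the six suprema by the same mutual interleaving—theory categorical implies fresh at the same order, and theorem~\ref{Theorem.If-kappa-fresh-then-kappa-Plus-theory-categorical} sends every fresh cardinal to a first-order theory categorical cardinal just above it—and you obtain the $\Sigma_2(\R)$ upper bound from theorem~\ref{Theorem.Correctness-relations}(2), which is exactly the estimate the paper re-derives in its proof. Where you genuinely diverge is the strict lower bound. The paper identifies $\xi$, the least lightface $\Sigma_2$-correct cardinal, with the supremum of the sententially categorical cardinals (theorem~\ref{Theorem.Supremum-sententially-categorical-cardials}) and then invokes theorem~\ref{Theorem.Limits-of-sententially-categorical-are-theory-categorical} to see that the next inaccessible cardinal $\mu$ above $\xi$ is first-order theory categorical. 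You instead prove directly that $\mu$ is first-order fresh: any inaccessible $\delta<\mu$ with $V_\delta\equiv V_\mu$ would inherit the full lightface $\Sigma_2$ theory of $V$ from the $\Sigma_2$-correct structure $V_\mu$, forcing $\delta=\xi$, and you dispose of that residual case by showing $\xi$ would then be fresh and applying theorem~\ref{Theorem.If-kappa-fresh-then-kappa-Plus-theory-categorical} to contradict $V_\xi\equiv V_\mu$. This alternative is sound and in fact leaner, since it bypasses theorem~\ref{Theorem.Limits-of-sententially-categorical-are-theory-categorical} altogether (and makes your citation of theorem~\ref{Theorem.Supremum-sententially-categorical-cardials} for $\lambda\ge\xi$ redundant). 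The one comment worth making is that the ``degenerate possibility'' you identify as the main obstacle cannot actually occur: $\xi$ is the supremum of the countably many stages at which new lightface $\Sigma_2$ facts first become true, so it has cofinality at most $\omega$ and is never inaccessible—the paper makes the analogous cofinality remark just after this theorem—so the hypothetical witness $\delta$ simply cannot equal $\xi$. Your more roundabout treatment of that case is nevertheless valid, so nothing is lost beyond a little efficiency.
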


\begin{proof}
Every first-order theory categorical cardinal is second-order theory categorical and $\alpha$-order theory categorical, as well as first-order fresh, and these are all also second-order fresh and $\alpha$-order fresh. By theorem \ref{Theorem.If-kappa-fresh-then-kappa-Plus-theory-categorical}, every $\alpha$-order fresh cardinal $\kappa$ has the next inaccessible cardinal $\kappa^\Plus$ being first-order theory categorical. So all various types of categorical and fresh cardinals are all interwoven cofinally in each other, and therefore the supremum of each class is the same.

By theorem \ref{Theorem.Supremum-sententially-categorical-cardials}, if there are unboundedly many inaccessible cardinals, then the supremum of the first-order sententially categorical cardinals is the first lightface $\Sigma_2$-correct cardinal, and the next inaccessible after this will be theory categorical by theorem \ref{Theorem.Limits-of-sententially-categorical-are-theory-categorical}. So the supremum of the theory categorical cardinals is strictly larger than the supremum of the sententially categorical cardinals, which is the first lightface $\Sigma_2$-correct cardinal.
If $\kappa$ is fresh, then if $t$ is the theory of $V_\kappa$, the assertion that there is an inaccessible cardinal whose rank-initial segment has theory $t$ is a $\Sigma_2$ fact about $t$ that first becomes true just after $\kappa$. So $\kappa$ is less than the supremum of the ordinals $\xi$ at which a new $\Sigma_2$ fact becomes true in $V_\xi$ with a real parameter, and so the supremum is bounded by the first $\Sigma_2(\R)$-correct cardinal.
\end{proof}

It follows immediately from the theorem that every $\alpha$-order theory arising as the theory of $V_\gamma$ for an inaccessible cardinal $\gamma$ above the supremum ordinal mentioned in the theorem, for any absolutely definable ordinal $\alpha$, also arises as the theory of some $V_\delta$ for an inaccessible cardinal $\delta$ strictly below this supremum. That is, every inaccessible cardinal $\gamma$ above this supremum (which is not regular since it has cofinality at most $\continuum$) has a theory for $V_\gamma$ that arises for the first time at a fresh cardinal $\beta$ strictly below the supremum. In short, every theory arising at inaccessible cardinals above the supremum mentioned in theorem \ref{Theorem.Supremum-theory-categorical-cardials} has already been encountered below it.

In this sense, the truly large cardinals, those above the supremum of theorem \ref{Theorem.Supremum-theory-categorical-cardials}, are never fresh---their theories, at whatever absolutely definable order $\alpha$ one might consider, have been encountered before below this supremum. Once we get above this supremum, the theories of the various $V_\kappa$ are simply repeating themselves in various complicated patterns that have already been seen before.

To summarize the consequences of theorems \ref{Theorem.Supremum-sententially-categorical-cardials} and \ref{Theorem.Supremum-theory-categorical-cardials} for the case when there is a proper class of inaccessible cardinals, the situation is that the supremum of the sententially categorical cardinals is equal to the first lightface $\Sigma_2$-correct cardinal, which is strictly less than the supremum of the theory categorical cardinals, which is bounded above by the least $\Sigma_2(\R)$-correct cardinal, which is strictly less than the least $\undertilde\Sigma_2$-correct cardinal.

The analysis of this section seems to justify the view of categoricity as a smallness notion for large cardinals---or perhaps it is better to say that the true largeness notions are anti-categorical---since categoricity occurs only below the $\undertilde\Sigma_2$-correct cardinals and therefore below every totally otherworldly cardinal, every strong cardinal, every supercompact cardinal, every extendible cardinal, and more, as these cardinals are all $\undertilde\Sigma_2$ correct. This view in turn leads to some philosophical tensions with the idea that mathematicians should generally seek categorical accounts of all their fundamental structures. After all, for us to adopt one of the categorical axiomatizations that characterize these categorical cardinals would be to adopt a theory that we know must be describing a smallish set-theoretic universe. But
this is the opposite of what we are trying to do in our set-theoretic foundations---we seek instead upward-reaching axioms that will maximize our foundational realm and make it as large as possible, so as more fully to accommodate arbitrary mathematical structure and ideas. We shall discuss this philosophical tension more fully in section \ref{Section.Philosophical-issues}. Meanwhile, the property of being $\undertilde\Sigma_2$ correct depends of course on the ambient set-theoretic universe $V$ in which it is considered.

Let us introduce the \emph{rank elementary forest} on inaccessible cardinals, the relation by which $\kappa\preceq\lambda$ if and only if the corresponding rank-initial segments form an elementary substructure, $V_\kappa\elesub V_\lambda$, for inaccessible cardinals $\kappa$ and $\lambda$. This is a partial order on inaccessible cardinals, and it is a forest, since the predecessors of any node are linearly ordered.

\begin{theorem}
Every first-order theory categorical cardinal is a stump in the rank elementary forest, that is, a disconnected root node with nothing above it.
\end{theorem}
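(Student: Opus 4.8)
The plan is to show that a first-order theory categorical cardinal $\kappa$ is isolated in the rank elementary forest: nothing lies $\elesub$-below it and nothing lies $\elesub$-above it. The engine of the whole argument is the elementary observation that $V_\mu\elesub V_\nu$ forces $V_\mu$ and $V_\nu$ to satisfy exactly the same first-order sentences, hence to have the same first-order theory; combined with the remark recorded earlier that $\kappa$ is first-order theory categorical exactly when no other inaccessible $V_\lambda$ shares the first-order theory of $V_\kappa$ (take the characterizing theory $T$ to be the full first-order theory of $V_\kappa$), this rules out either kind of $\elesub$-neighbor at once.

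First I would dispatch the clause ``nothing above $\kappa$''. Suppose toward a contradiction that there is an inaccessible $\lambda>\kappa$ with $V_\kappa\elesub V_\lambda$. Then $V_\kappa$ and $V_\lambda$ have the same first-order theory, so $\kappa$ is not first-order theory categorical, a contradiction. One can alternatively read this directly off Theorem~\ref{Theorem.Extensions-not-second-order-theory-categorical} taking $M=V_\lambda$, noting $V_{\kappa+1}\of V_\lambda$ since $\lambda>\kappa$ is inaccessible; that stronger theorem in fact even denies second-order theory categoricity under this hypothesis.

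Next I would handle the clause ``$\kappa$ is a root''. Suppose toward a contradiction that there is an inaccessible $\delta<\kappa$ with $V_\delta\elesub V_\kappa$. Then again $V_\delta$ and $V_\kappa$ have the same first-order theory, contradicting the first-order theory categoricity of $\kappa$. Hence no inaccessible cardinal is $\elesub$-below $\kappa$ and none is $\elesub$-above it, so $\kappa$ is a disconnected root node with nothing above it---a stump.

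I do not anticipate a genuine obstacle: the statement is essentially a repackaging of the definition of theory categoricity together with the fact that elementarity implies agreement of theories. The only points worth being careful about are making explicit that ``stump'' means a node that is simultaneously $\elesub$-minimal and $\elesub$-maximal, and noting the symmetry of the two halves of the argument---in contrast with sentential categoricity (Theorem~\ref{Theorem.Failures-of-sentential-categoricity-smaller}), where failures are always witnessed downward, here a smaller and a larger $\elesub$-comparable inaccessible are equally fatal to theory categoricity.
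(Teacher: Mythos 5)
Your argument is correct and is essentially the paper's own proof: the paper likewise observes that first-order theory categoricity immediately rules out both $V_\kappa\elesub V_\lambda$ and $V_\delta\elesub V_\kappa$ for inaccessible neighbors, since elementarity would force equality of first-order theories. Your extra remarks (the route via theorem \ref{Theorem.Extensions-not-second-order-theory-categorical} and the contrast with sentential categoricity) are fine but not needed.
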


\begin{proof}
This is almost immediate from the definition. If $\kappa$ is first-order theory categorical, then we cannot have $V_\kappa\elesub V_\lambda$ nor $V_\delta\elesub V_\kappa$, and so $\kappa$ must be a stump.
\end{proof}

The converse is not true, since we can have $V_\kappa\equiv V_\lambda$ without $V_\kappa\elesub V_\lambda$, and so it seems possible to violate first-order theory categoricity while $\kappa$ is still a stump. To see this, let $\kappa$ be the least inaccessible cardinal for which there is some ordinal $\lambda$ for which $V_\kappa\elesub V_\lambda$, and let $\lambda$ be least with that property. It follows that $V_\lambda$ thinks that the theory $T=\Th(V_\kappa)$ is realized at an inaccessible cardinal, namely at $\kappa$, and so $V_\kappa$ should also think this about $T$. So there will be some inaccessible cardinal $\delta<\kappa$ with $V_\delta\equiv V_\kappa$. So $V_\lambda$ will think that $\kappa$ is a stump in the rank elementary forest, but not first-order theory categorical.

Theory categoricity is ultimately about the relation of elementary equivalence $V_\kappa\equiv V_\lambda$ rather than the relation of elementary substructure $V_\kappa\elesub V_\lambda$, and so it is sensible to consider the alternative forest, by which $\kappa\leqequiv\lambda$ for inaccessible cardinals, just in case $\kappa\leq\lambda$ and $V_\kappa\equiv V_\lambda$. In this case, a cardinal is first-order theory categorical just in case it is a stump in the $\leqequiv$ forest. And there is a corresponding forest order $\leqequiv_2$ for second-order elementary equivalence $V_\kappa\equiv_2 V_\lambda$, by which the second-order theory categorical cardinals are exactly the stumps of the $\leqequiv_2$-forest.

\section{Gaps in the categorical cardinals}

Let us now prove that there are various kinds of gaps in the categorical cardinals. If there are sufficiently many inaccessible cardinals, then the categorical cardinals (of any type) do not form an initial segment of the inaccessible cardinals.

\begin{observation}
If there are uncountably many inaccessible cardinals, then there are some inaccessible cardinals that are neither first-order nor second-order sententially categorical.
\end{observation}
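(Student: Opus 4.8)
The plan is a simple counting argument. First I would check that each first-order sentence and each second-order sentence can serve to categorically characterize at most one inaccessible level $V_\kappa$ over $\ZFC_2$: if $\sigma$ categorically characterizes $V_\kappa$, then by the main definition $\kappa$ is the unique inaccessible cardinal with $V_\kappa\satisfies\sigma$, so $\sigma$ cannot also categorically characterize $V_{\kappa'}$ for any other inaccessible $\kappa'$, since $V_{\kappa'}\satisfies\sigma$ would then force $\kappa'=\kappa$. Thus, within each order, the assignment of a characterizing sentence to a sententially categorical cardinal is injective.

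Next I would note that the language of set theory is countable, and formulas---whether first-order or second-order---are finite strings over a fixed countable alphabet, so there are only countably many first-order sentences and only countably many second-order sentences. Combined with the injectivity just observed, there are at most countably many first-order sententially categorical cardinals and at most countably many second-order sententially categorical cardinals; hence the collection of inaccessible cardinals that are sententially categorical in either order is countable.

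Finally, under the hypothesis that there are uncountably many inaccessible cardinals, this countable collection cannot contain them all, so some inaccessible cardinal fails to be first-order sententially categorical and also fails to be second-order sententially categorical, which is the desired conclusion. I do not foresee any genuine obstacle; the only step meriting a moment's attention is the remark that second-order logic over a countable signature still has only countably many sentences, which is clear since each sentence is a finite expression.
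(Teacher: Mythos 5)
Your proposal is correct and is essentially the paper's own argument: a counting argument noting that the map from sententially categorical cardinals to characterizing sentences is injective (by uniqueness in the definition of categoricity) and that there are only countably many first- or second-order sentences, so uncountably many inaccessibles must include non-categorical ones. No gap here.
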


\begin{proof}
This is clear, because there are only countably many sentences, in either first or second order, and we may associate each sententially categorical cardinal with a sentence that characterizes it.\footnote{This is not an instance of the Math Tea argument, as discussed in \cite{HamkinsLinetskyReitz2013:PointwiseDefinableModelsOfSetTheory}, since we are not referring here to truth-in-the-universe $V$, but only to truth in set structures.} This association is one-to-one between the sententially categorical cardinals and a countable set. So there must be some inaccessible cardinals that are not sententially categorical.
\end{proof}\goodbreak

\begin{theorem}\
 \begin{enumerate}
   \item If $\kappa$ is the least inaccessible cardinal that is not first-order sententially categorical, then $\kappa^\Plus$, if it exists, is first-order sententially categorical.
   \item If $\kappa$ is the least inaccessible cardinal that is not second-order sententially categorical, then $\kappa^\Plus$, if it exists, is first-order sententially categorical.
   \item Consequently, if there are at least two inaccessible cardinals that are not sententially categorical, then there are gaps in the sententially categorical cardinals.
 \end{enumerate}
\end{theorem}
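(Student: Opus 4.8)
The plan is to handle (1) and (2) uniformly by producing, in each case, a single first-order sentence $\psi$ that categorically characterizes $V_{\kappa^\Plus}$ over $\ZFC_2$, and then to read off (3) in a line. I will lean on Zermelo's theorem (Theorem~\ref{Theorem.Zermelo-quasi-categoricity}) throughout: since the models of $\ZFC_2$ are, up to isomorphism, precisely the $V_\theta$ for inaccessible $\theta$, to say that $\psi$ categorically characterizes $V_{\kappa^\Plus}$ is just to say that $\kappa^\Plus$ is the unique inaccessible $\theta$ with $V_\theta\satisfies\psi$.

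For (1), I start from the hypothesis that $\kappa$ is the least inaccessible cardinal that is not first-order sententially categorical, so that every inaccessible cardinal below $\kappa$ is first-order sententially categorical. I then take $\psi$ to be the sentence: ``there is a largest inaccessible cardinal, call it $\kappa$; this $\kappa$ is not first-order sententially categorical; and every inaccessible cardinal below $\kappa$ is first-order sententially categorical.'' The crucial preliminary point is that ``$\lambda$ is first-order sententially categorical,'' interpreted inside a model $V_\theta$ of $\ZFC_2$, is a genuine first-order property of $\lambda$ in $V_\theta$: it quantifies over (codes of) first-order sentences $\sigma$ and over inaccessible $\mu<\theta$, invoking only the $\ZFC$-definable satisfaction relation for the set structures $V_\mu$. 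Granting this, $\psi$ is a legitimate first-order sentence in the language of set theory.

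Next I would check the two halves of categoricity. That $V_{\kappa^\Plus}\satisfies\psi$ holds because $\kappa$ is its largest inaccessible and, by Theorem~\ref{Theorem.Sentential-categoricity-absolute-to-Vtheta}, the first-order sentential (non-)categoricity of $\kappa$ and of each smaller inaccessible is the same computed in $V_{\kappa^\Plus}$ as in $V$. For uniqueness, suppose $V_\theta\satisfies\ZFC_2+\psi$ with $\theta$ inaccessible, and let $\kappa'$ be the largest inaccessible of $V_\theta$; since $\theta$ is itself inaccessible with no inaccessibles between $\kappa'$ and $\theta$, we get $\theta=(\kappa')^\Plus$. Because $\kappa'<\theta$, Theorem~\ref{Theorem.Sentential-categoricity-absolute-to-Vtheta} pushes ``$\kappa'$ is not first-order sententially categorical'' and ``every inaccessible below $\kappa'$ is first-order sententially categorical'' from $V_\theta$ down to $V$, so $\kappa'$ is exactly the least inaccessible of $V$ failing first-order sentential categoricity; hence $\kappa'=\kappa$ and $\theta=\kappa^\Plus$. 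This establishes (1). For (2) I would run the identical argument with ``first-order'' everywhere replaced by ``second-order''; the only new wrinkle is that second-order satisfaction for a set structure $V_\mu$ is still first-order definable inside $V_\theta$ (the second-order quantifiers range over $V_{\mu+1}\in V_\theta$), and Theorem~\ref{Theorem.Sentential-categoricity-absolute-to-Vtheta} again supplies the required absoluteness.

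Finally, (3) follows at once. An inaccessible cardinal fails to be sententially categorical precisely when it fails to be second-order sententially categorical, since first-order sentential categoricity trivially implies the second-order version. So, given at least two inaccessibles that are not sententially categorical, let $\kappa$ be the least; the second one lies strictly above $\kappa$, so $\kappa$ is not the largest inaccessible and $\kappa^\Plus$ exists. By (2), $\kappa^\Plus$ is first-order sententially categorical, whereas $\kappa<\kappa^\Plus$ is not sententially categorical at all, so the sententially categorical cardinals fail to form an initial segment of the inaccessible cardinals---a gap. I expect the only genuinely delicate step to be the one flagged above: verifying that ``$\lambda$ is (first- or second-order) sententially categorical, as seen inside $V_\theta$'' is a bona fide first-order formula whose truth value there matches the verdict in $V$. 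This rests on internal Tarskian truth definitions for set structures together with Theorem~\ref{Theorem.Sentential-categoricity-absolute-to-Vtheta}; everything else is routine bookkeeping with Zermelo's characterization of the models of $\ZFC_2$.
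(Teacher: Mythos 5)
Your proof is correct and follows essentially the same route as the paper: characterize $V_{\kappa^\Plus}$ by the first-order sentence asserting there is a largest inaccessible cardinal which is the unique (equivalently, least) one failing first- or second-order sentential categoricity, using the absoluteness of sentential categoricity (Theorem~\ref{Theorem.Sentential-categoricity-absolute-to-Vtheta}) together with the first-order definability in $V_{\kappa^\Plus}$ of the first- and second-order satisfaction relations for the set structures $V_\mu$. Your more explicit uniqueness check and your reduction of (3) to (2) via ``first-order implies second-order'' are just expanded bookkeeping of what the paper compresses.
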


\begin{proof}
Suppose that $\kappa$ is the least inaccessible cardinal that is not first-order sententially categorical. By theorems \ref{Theorem.Sentential-categoricity-absolute-to-Vtheta} and \ref{Theorem.Failures-of-sentential-categoricity-smaller}, this is observable inside any larger $V_\theta$. In particular, if $\kappa^\Plus$ exists, then $V_{\kappa^\Plus}$ can see that $\kappa$ is not first-order sententially categorical. Therefore, $V_{\kappa^\Plus}$ is categorically characterized by the sentence asserting, ``there is a largest inaccessible cardinal and it is the only inaccessible cardinal that is not first-order sententially categorical.''

Essentially the same argument works if $\kappa$ is the least inaccessible cardinal that is not second-order sententially categorical. In this case, we still find a \emph{first-order} sentential characterization of $V_{\kappa^\Plus}$, since it will be the only inaccessible model which thinks there is a largest inaccessible cardinal which also is the only such cardinal that is not second-order sententially categorical, and the point is that this is a first-order assertion in $V_{\kappa^\Plus}$ since this model can define the truth predicate for $V_\kappa$, which is a mere set in $V_{\kappa^\Plus}$.

If there are at least two inaccessible cardinals that are not sententially categorical (either first or second order), then $\kappa^\Plus$ will exist and be first-order sententially categorical, where $\kappa$ is the least one, and so there are gaps in the sententially categorical cardinals.
\end{proof}
%

If there are sufficiently many inaccessible cardinals, then the gaps become arbitrarily complicated and self-reflecting, since if all the gaps in the sententially categorical cardinals had the same simple nature, we could recognize the end of the sententially categorical cardinals---the top gap in a sense---and thereby find an inaccessible cardinal above it that would be characterized by seeing that there was such a new large gap in the sententially categorical cardinals.

A similar analysis works for theory categoricity, even though we lack the analogues of theorems \ref{Theorem.Sentential-categoricity-absolute-to-Vtheta} and \ref{Theorem.Failures-of-sentential-categoricity-smaller} for theory categoricity.

\begin{theorem}\
 \begin{enumerate}
   \item If $\kappa$ is the least inaccessible cardinal that is not theory categorical, either in first order, second order, or $\alpha$-order for some absolutely definable ordinal $\alpha$, then $\kappa$ is fresh in that order.
   \item Consequently, $\kappa^\Plus$, if it exists for this $\kappa$, is first-order theory categorical.
   \item In particular, if there are at least two two inaccessible cardinals that are not theory categorical of a given order, then there are gaps in the theory categorical cardinals.
 \end{enumerate}
\end{theorem}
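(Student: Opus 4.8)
The plan is to take the three parts in sequence, with part (1) doing the real work via a minimality argument and parts (2) and (3) following by invoking Theorem~\ref{Theorem.If-kappa-fresh-then-kappa-Plus-theory-categorical} together with some bookkeeping about which inaccessible cardinals lie below $\kappa$.

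For part (1), I would fix the order under discussion (first-order, second-order, or $\alpha$-order for an absolutely definable ordinal $\alpha$), let $T$ be the theory of $V_\kappa$ in that order, and argue by contradiction. If $\kappa$ were not fresh in that order, there would be an inaccessible cardinal $\delta<\kappa$ for which $V_\delta$ has the same order-$\alpha$ theory $T$. But $\kappa$ is by hypothesis the \emph{least} inaccessible cardinal that fails to be theory categorical in that order, so every inaccessible cardinal below it---in particular $\delta$---\emph{is} theory categorical in that order. Unpacking ``$\delta$ is theory categorical in order $\alpha$'' means that $V_\delta$ is, up to isomorphism, the only model of $\ZFC_2+T$. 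Since $V_\kappa$ is also a model of $\ZFC_2+T$ and is not isomorphic to $V_\delta$ (the two have different ranks), this is a contradiction. Hence no such $\delta$ exists and $\kappa$ is fresh in that order.

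For part (2), I would simply feed the conclusion of part (1) into Theorem~\ref{Theorem.If-kappa-fresh-then-kappa-Plus-theory-categorical}: that theorem says that whenever $\kappa$ is fresh---in first order, second order, or $\alpha$-order for an absolutely definable $\alpha$---the next inaccessible cardinal $\kappa^\Plus$, if it exists, is first-order theory categorical. So this is immediate. For part (3), suppose there are at least two inaccessible cardinals that are not theory categorical of the given order, and let $\kappa$ be the least one. The existence of a second such cardinal, necessarily above $\kappa$, guarantees that there is an inaccessible cardinal above $\kappa$, so $\kappa^\Plus$ exists; by part (2) it is first-order theory categorical. At the same time, every inaccessible cardinal below $\kappa$ is theory categorical in the given order by the minimality of $\kappa$, and there is at least one such cardinal, since the least inaccessible cardinal is characterized over $\ZFC_2$ by the sentence ``there are no inaccessible cardinals'' and hence is first-order theory categorical, and therefore also $\alpha$-order theory categorical, so $\kappa$ lies strictly above it. We therefore exhibit a theory categorical inaccessible cardinal below $\kappa$, the non-theory-categorical cardinal $\kappa$ between, and the theory categorical cardinal $\kappa^\Plus$ above---that is, a genuine gap in the theory categorical cardinals.

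The only place that calls for care is part (1), where the hypothesis and the conclusion must be read at one and the same order, and where the key move is translating ``$\delta$ is theory categorical in order $\alpha$'' into ``$V_\delta$ is the unique model of $\ZFC_2$ plus its order-$\alpha$ theory.'' I do not anticipate a substantive obstacle, since the hard transmission step---from $\kappa$ fresh to $\kappa^\Plus$ theory categorical---is already available as Theorem~\ref{Theorem.If-kappa-fresh-then-kappa-Plus-theory-categorical}, and everything else is elementary counting of inaccessible cardinals.
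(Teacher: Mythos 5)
Your proposal is correct and follows essentially the same route as the paper: part (1) by minimality (the theory of $V_\kappa$ cannot have occurred at a smaller inaccessible, since all smaller ones are theory categorical), part (2) by invoking theorem \ref{Theorem.If-kappa-fresh-then-kappa-Plus-theory-categorical}, and part (3) by applying this to the least non-theory-categorical cardinal, whose $\kappa^\Plus$ exists because of the second such cardinal. Your extra observation that the least inaccessible cardinal is theory categorical merely makes the gap explicit on both sides and does not change the argument.
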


\begin{proof}
If $\kappa$ is the least cardinal that is not theory categorical, in given absolutely definable order, then $\kappa$ is fresh in that same order, since the theory of $V_\kappa$ could not have arisen earlier, as the smaller cardinals are all theory categorical. So statement 1 holds. Statement 2 now follows as an immediate consequence of theorem \ref{Theorem.If-kappa-fresh-then-kappa-Plus-theory-categorical}. And Statement 3 follows as a consequence of this, since if there are two inaccessible cardinals that are not theory categorical in a given absolutely definable order, then $\kappa^\Plus$ will exist for the smaller of them, and hence be theory categorical in that order by statement 2.
\end{proof}

Note that if there are at least $\continuum^+$ many inaccessible cardinals, then there must be numerous inaccessible cardinals that are not categorical in any given order $\alpha<\continuum$, because there are only continuum many theories (a similar statement holds for larger $\alpha$, if there are at least $(2^\alpha)^+$ many inaccessible cardinals).

\section{On the number of categorical cardinals}

There are, as we have mentioned, at most countably many sententially categorical cardinals, either first or second order, simply because there are only countably many sentences. And it is clear that if there are infinitely many inaccessible cardinals, then there are infinitely many sententially categorical cardinals, because there is the first one, the second, the third and so on, and these are each sententially categorical, each characterized by the statement that there are exactly $n$ inaccessible cardinals. The first $\omega$ many inaccessible cardinals are all first-order sententially categorical in this way.

Similarly, we have mentioned that because there are at most continuum many different theories, there will be at most continuum many theory categorical cardinals. But how many different theory categorical cardinals must there be? If there are uncountably many inaccessible cardinals, must there be uncountably many theory categorical cardinals? Are each of the first $\omega_1$ many inaccessible cardinals theory categorical? If there are at least continuum many inaccessible cardinals, must there be continuum many theory-categorical cardinals? Surprisingly, the answers to all these questions can be negative.

\begin{theorem}\label{Theorem.Countably-many-theory-categorical-cardinals}
It is relatively consistent with \ZFC\ that the inaccessible cardinals form a proper class but there are only countably many theory categorical cardinals.
\end{theorem}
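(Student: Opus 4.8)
The plan is to reduce the statement to a counting fact about the first-order theories realized by the $\langle V_\kappa,\in\rangle$, and then to exhibit a model in which that count stays countable by building the universe, in effect, out of a class of indiscernibles.

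First I would record the reduction. By the remarks following theorem~\ref{Theorem.If-kappa-fresh-then-kappa-Plus-theory-categorical}, every theory categorical cardinal is fresh, and the fresh cardinals are in bijection with the first-order theories that occur as $\Th(\langle V_\delta,\in\rangle)$ for some inaccessible $\delta$ (a realized theory $\mapsto$ its least inaccessible realization, and an inaccessible $\kappa\mapsto\Th(\langle V_\kappa,\in\rangle)$). Hence, provided there is no largest inaccessible, the map $\kappa\mapsto\Th(\langle V_\kappa,\in\rangle)$ is injective on the theory categorical cardinals into the set of realized theories, and it suffices to produce a model of $\ZFC$ with a proper class of inaccessible cardinals in which only countably many distinct first-order theories occur among the structures $\langle V_\kappa,\in\rangle$ for inaccessible $\kappa$.

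For this I would work in $L$ under the hypothesis that $0^\#$ exists (so the consistency assumption is really that of a large cardinal — a Ramsey or measurable — which is what the phrase ``relatively consistent with $\ZFC$'' is tacitly invoking here). Since $0^\#$ exists there is a closed unbounded class $I$ of Silver indiscernibles for $\langle L,\in\rangle$; every element of $L$ is definable in $L$ from finitely many members of $I$, and each member of $I$ is inaccessible — indeed Mahlo — in $L$, so $L\models$ ``there is a proper class of inaccessible cardinals.'' The key point is that any ordinal may be written $\alpha=t^L(\gamma_1,\dots,\gamma_n)$ for a Skolem term $t$ and indiscernibles $\gamma_1<\dots<\gamma_n$ in $I$; since $L_\alpha$ is $L$-definable from $\alpha$, the truth of a sentence $\sigma$ in $\langle L_\alpha,\in\rangle$ is equivalent to a fixed first-order assertion about $\langle L,\in,\gamma_1,\dots,\gamma_n\rangle$, which by indiscernibility depends only on $t$ and $n$, not on which indiscernibles are used. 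Thus $\Th(\langle L_\alpha,\in\rangle)$ depends only on the Skolem term computing $\alpha$, so $\{\Th(\langle L_\alpha,\in\rangle):\alpha\in\Ord\}$, and in particular the set of theories realized at inaccessible levels, is countable. With the reduction this gives $L\models$``there are only countably many theory categorical cardinals.'' (The same argument applied to $\Sigma^1_n$-theories, which reduce to first-order theories of $L_{\kappa+1}$, handles second- and $\alpha$-order theory categoricity as well.)

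The step I expect to require the most care is making the countability \emph{internal} to the model, since $L$ itself does not possess $0^\#$ and cannot see $I$. One argues that a theory categorical cardinal of $L$ is necessarily definable in $L$: if $\kappa=t^L(\vec\gamma)$ with $t$ depending nontrivially on some $\gamma_i$, then replacing that indiscernible produces, by indiscernibility, a \emph{different} inaccessible with the same theory as $\langle L_\kappa,\in\rangle$, so $\kappa$ is not theory categorical. The parameter-free definable ordinals of $L$ lie below the least Silver indiscernible $\gamma_0$, and $L_{\gamma_0}\prec L$, so transferring the count across this elementarity yields the internal conclusion; alternatively one performs the construction by class forcing that adjoins a definable club class of indiscernibles to the model, so that it can enumerate the realized theories directly. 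Either way one must also verify that the construction preserves $\ZFC$ and the proper class of inaccessibles — the latter being immediate from the indiscernibles' being inaccessible.
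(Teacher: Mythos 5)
Your reduction and your count of realized theories in $L$ are fine as statements about $V$: assuming $0^\sharp$, every $L$-inaccessible level $L_\alpha$ has its theory determined by the Skolem term producing $\alpha$, so only countably many theories are realized and hence only countably many cardinals can be theory categorical in $L$ --- \emph{countably many as counted in $V$}. The genuine gap is exactly the step you flagged and then only gestured at: the theorem needs the model itself, here $L$, to satisfy ``there are only countably many theory categorical cardinals,'' and nothing in your sketch delivers that. The class of theory categorical cardinals of $L$ is an $L$-definable class; your indiscernible-swapping argument shows (correctly) that any such cardinal is an invariant term value and hence lies below the least Silver indiscernible $\gamma_0$, but this gives no internal countability: a set of $L$-ordinals that is countable in $V$ can perfectly well be uncountable in $L$ (think of $\omega_1^L$, or the set of reals of $L$). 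Your repair attempts do not close this. The inference from ``the term value does not depend on the indiscernibles'' to ``$\kappa$ is parameter-free definable in $L$'' is unjustified --- $L$ cannot define the class $I$, and in fact by remarkability \emph{every} ordinal below $\gamma_0$ is an invariant term value, so invariance carries no definability information at all; and ``transferring the count across $L_{\gamma_0}\elesub L$'' is not a meaningful move, since the count is an external cardinality fact, not the truth value of any first-order sentence available on either side. It is not even clear that $L$ does think its theory categorical cardinals are countable (if they happened to be unbounded in $\gamma_0$, $L$ would see $\gamma_0$ many of them), and your alternative suggestion of class forcing to ``adjoin a definable club class of indiscernibles'' is too vague to evaluate. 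There is also a hypothesis mismatch: $0^\sharp$ is far stronger than what is needed, and the phrase ``relatively consistent with $\ZFC$'' is not tacitly invoking a Ramsey or measurable cardinal --- the intended hypothesis is just a model of $\ZFC$ with a proper class of inaccessibles.

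For contrast, the paper's proof gets the internalization for free by forcing: over any model of $\ZFC$ (with its proper class of inaccessibles), collapse the continuum to $\omega$ with $\Coll(\omega,\continuum)$. This forcing is small, so it preserves all inaccessible cardinals and creates none; because it is definable and homogeneous, the theory of $V[G]_\kappa$ is computed from the theory of $V_\kappa$ via Boolean values, so no new instances of theory categoricity are created; and the at most continuum many theory categorical cardinals of the ground model literally become a countable set in the extension, witnessed inside $V[G]$ by the collapse itself. That last point --- an actual enumeration living in the model --- is precisely what your $L$-based approach lacks, and supplying it (or proving $L$'s own count is countable) is the missing content of your proof.
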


This theorem is an immediate consequence of the following more specific theorem.

\begin{theorem}
Every model of \ZFC\ has a forcing extension, preserving all inaccessible cardinals and creating no new ones, in which there are only countably many second-order theory categorical cardinals.
\end{theorem}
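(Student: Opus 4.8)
The plan is to reduce the problem to controlling the first-order theories of the structures $\langle V_{\lambda+1},\in\rangle$ and then to arrange, by a tame class forcing, that only countably many such theories ever occur. For the reduction: since every relevant $\kappa$ is inaccessible, full second-order satisfaction over $\langle V_\kappa,\in\rangle$ is mutually interpretable with first-order satisfaction over $\langle V_{\kappa+1},\in\rangle$ --- the subsets and relations on $V_\kappa$ are exactly the elements of $V_{\kappa+1}$, and $V_\kappa$ itself is first-order definable in $V_{\kappa+1}$. Hence $\kappa$ is second-order theory categorical if and only if the first-order theory of $\langle V_{\kappa+1},\in\rangle$ agrees with that of $\langle V_{\lambda+1},\in\rangle$ for no other inaccessible $\lambda$. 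So it suffices to produce a forcing extension, tame enough to preserve exactly the inaccessible cardinals, in which only countably many distinct theories arise among the $\langle V_{\lambda+1},\in\rangle$ for $\lambda$ inaccessible; then at most countably many inaccessibles can realize such a theory uniquely. (If $V$ has only countably many inaccessibles, force trivially; the content is when there are uncountably, or proper-class, many.)

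For the forcing I would use an Easton-support class iteration $\P$ acting only at inaccessible cardinals, where the stage forcing at $\lambda$ is a maximally homogeneous poset of size $\lambda$ --- for instance a GCH-pattern coding forcing, or an $\Add(\lambda,1)$-style forcing --- chosen so that at every inaccessible $\lambda$ the initial segment $\P_\lambda$ factors as a $\lambda$-c.c.\ forcing followed by $\lambda$-closed forcing and the tail $\P^{\geq\lambda}$ is ${\leq}\lambda$-closed. These are the standard conditions making $\P$ an $\Ord$-c.c., progressively closed, tame class forcing: it preserves $\ZFC$, preserves all cardinals and cofinalities, computes $V_\lambda^{V[G]}=(V_\lambda)^V[G\restriction\lambda]$ correctly at each inaccessible $\lambda$, keeps every inaccessible inaccessible, and creates no new ones, since the cofinalities and the relevant power sets below any candidate ordinal are left unchanged by the tail. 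The purpose of the homogeneity of the stage forcings is that it should leave in $\Th(\langle V_{\lambda+1},\in\rangle^{V[G]})$ no parameter-free-definable information about $\lambda$ beyond the coarse data of where $\lambda$ lies in the iterated-inaccessibility (Mahlo-operation) hierarchy.

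The key lemma is then that in $V[G]$ only countably many values occur among the theories $\Th(\langle V_{\lambda+1},\in\rangle)$ for $\lambda$ inaccessible. Inside $V_{\lambda+1}$ one defines the iterated rank $r(\lambda)$ of $\lambda$ in this hierarchy --- the order type of the inaccessibles below $\lambda$, then of the limits of inaccessibles below $\lambda$, and so on transfinitely --- all computable in $V_{\lambda+1}$ since inaccessibility of $\delta<\lambda$ is decided well below $\lambda$. One argues by induction along the iteration, using a homogeneity/automorphism argument at each stage, that $\Th(V_{\lambda+1}^{V[G]})$ is determined by whether this rank is ``recognizable'' --- separated from all others by a sentence of $\langle V_{\lambda+1},\in\rangle$ --- and, if so, by which recognizable value it is; all non-recognizable ranks yield one and the same generic theory. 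Since there are only countably many recognizable cases (they lie below the first rank the structure cannot pin down) together with the single non-descript case, only countably many theories occur. Every inaccessible of non-descript rank then shares its theory with proper-class-many others (whenever enough inaccessibles exist) and so is not theory categorical, leaving only the countably many recognizable ones.

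The \textbf{hard part} is exactly this key lemma: proving that the homogeneous coding genuinely washes out every distinguishing parameter, so that $\Th(V_{\lambda+1})$ depends only on the coarse, countable-valued rank. The delicate point is that $V_{\lambda+1}$ always \emph{defines} the order type of the inaccessibles below $\lambda$ (and its iterates), so one cannot argue that the theory simply ignores these ordinals; rather the iteration must supply genuine first-order indiscernibility among the non-recognizable inaccessibles --- no sentence of $\langle V_{\lambda+1},\in\rangle$ may separate one such ordinal from another --- and it must do so while remaining a tame Easton-style class forcing that adds no inaccessibles and collapses no cardinals. Carrying out the inductive homogeneity analysis that yields this indiscernibility, together with the routine but numerous preservation checks for the class iteration, is the technical core of the proof.
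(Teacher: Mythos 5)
There is a genuine gap, and it sits exactly where you flagged it: the ``key lemma'' asserting that your homogeneous Easton-support iteration forces all inaccessibles of ``non-recognizable rank'' to share a single generic theory of $\<V_{\lambda+1},\in>$ is not proved, and it is implausible as stated. The first-order theory of $V_{\lambda+1}$ records robust large-cardinal facts that no inaccessible-preserving, progressively closed Easton iteration can homogenize away: whether $\lambda$ is Mahlo, weakly compact, $\Pi^1_n$-indescribable, or measurable, and the pattern of such cardinals below $\lambda$. These are not coding artifacts that stage-by-stage homogeneity can wash out (non-Mahloness, for instance, is never destroyed by a forcing that adds no inaccessibles, and Mahloness is preserved by the sort of iteration you describe), so two non-recognizable inaccessibles of different large-cardinal character will still have different $V_{\lambda+1}$-theories in $V[G]$. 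Your reduction of second-order categoricity of $\kappa$ to the first-order theory of $V_{\kappa+1}$ is fine, and so is the trivial remark that the number of categorical cardinals is at most the number of realized theories; but the claim that only countably many theories are realized is much stronger than the theorem and is precisely the unproven technical core, so the proposal as it stands does not constitute a proof.

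The paper's argument avoids all of this with a counting trick that needs no control over theories whatsoever: force with $\Coll(\omega,\continuum)$. This forcing is small relative to every inaccessible, so all inaccessible cardinals are preserved and none are created; it is definable and homogeneous, so for each inaccessible $\kappa$ the (first- or second-order) theory of $V_\kappa[G]$ is computed from the theory of $V_\kappa$ via the Boolean values $\boolval{\varphi}$, and in particular any two inaccessibles with the same theory in $V$ still have the same theory in $V[G]$. Hence no new instances of theory categoricity arise: every cardinal that is theory categorical in $V[G]$ was already theory categorical in $V$. In $V$ there are at most $\continuum$ many such cardinals (at most $\continuum$ many theories), and that cardinal has been collapsed to $\omega$, so in $V[G]$ there are only countably many second-order theory categorical cardinals. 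The lesson worth taking away is that the theorem only requires bounding the number of cardinals whose theory is realized \emph{uniquely}, and making the old bound $\continuum$ countable does this directly, without the indiscernibility analysis your plan would require.
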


\begin{proof}
Let $G\of\Coll(\omega,\continuum)$ be $V$-generic for the forcing to collapse the continuum to $\omega$, and consider the forcing extension $V[G]$. This forcing is small relative to any inaccessible cardinal, and so they are all preserved in the extension (and forcing can never create new inaccessible cardinals). Furthermore, because this forcing is homogenenous, it follows that the Boolean value $\boolval{\varphi(\check a)}$ of any statement $\varphi$ using only check-name parameters $\check a$ from the ground model is either $0$ or $1$. And since the forcing is definable in $V$, assertions about the Boolean value are expressible in the language of set theory. Indeed, the theory of $V[G]_\kappa$ is the same as the set of sentences $\varphi$ for which the statement `$\boolval{\varphi}=1$' is in the theory of $V_\kappa$ in the ground model. Therefore, the forcing does not create any new instances of sentential or theory categoricity. Since the number of theory categorical cardinals was at most continuum in the ground model, and this cardinal has been collapsed to $\omega$, it follows that there are only countably many theory categorical cardinals in the forcing extension $V[G]$.
\end{proof}

Conversely, there can also be continuum many theory categorical cardinals.

\begin{theorem}\label{Theorem.Continuum-many-theory-categorical-cardinals}
If there are at least $\beta$ many inaccessible cardinals, where $\beta$ is an ordinal below $\continuum^+$, then there is a forcing extension, preserving the continuum and having exactly the same inaccessible cardinals as the ground model, in which the first $\beta$ many inaccessible cardinals are all first-order theory categorical.
\end{theorem}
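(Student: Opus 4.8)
The plan is to pass to a forcing extension in which each of the first $\beta$ inaccessible universes $V_{\kappa_\xi}$ carries a distinct parameter-free definable ``tag,'' so that its first-order theory differs from that of every other inaccessible rank-initial segment, which makes it theory categorical. The key structural point is this. Enumerate the inaccessible cardinals of $V$ as $\langle\kappa_\xi : \xi<\delta\rangle$ with $\delta\ge\beta$. Since $\kappa_0$ is a strong limit we have $\continuum<\kappa_0$, and since $\kappa_0$ is a limit cardinal, $\continuum^{+}<\kappa_0$; hence $\beta<\kappa_0$. Consequently no $\kappa_\xi$ with $\xi<\beta$ is a limit of inaccessible cardinals: for successor $\xi$ this is trivial, and for limit $\xi$ the ordinal $\rho_\xi:=\sup_{\mu<\xi}\kappa_\mu$ has cofinality $\mathrm{cf}(\xi)\le|\xi|<\kappa_0\le\rho_\xi$, so it is singular, so $\rho_\xi\ne\kappa_\xi$, giving $\rho_\xi<\kappa_\xi$. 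Thus inside each $V_{\kappa_\xi}$ with $\xi<\beta$ the inaccessible cardinals are bounded, the parameter-free definable ordinal ``the supremum of the inaccessible cardinals'' equals $\rho_\xi<\kappa_\xi$, and there is room above $\rho_\xi$ and below $\kappa_\xi$ to code --- namely on the cardinals $\rho_\xi^{+n}$, $n<\omega$.

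Next I would describe the forcing. Fix in $V$ a one-to-one sequence $\langle a_\xi : \omega\le\xi<\beta\rangle$ of \emph{nonempty} subsets of $\omega$; this exists since $|\beta|\le\continuum$. Let $\P$ be the standard reverse Easton iteration that at each regular $\lambda\ge\kappa_0$ forces $2^{\lambda}=\lambda^{+}$ (collapsing $2^\lambda$ down to $\lambda^{+}$ where necessary), \emph{except} that when $\lambda=\rho_\xi^{+n}$ for some $\omega\le\xi<\beta$ with $n\in a_\xi$ it instead forces $2^{\lambda}=\lambda^{++}$. The coding blocks $\{\rho_\xi^{+n} : n<\omega\}$ for distinct $\xi<\beta$ are pairwise disjoint since the intervals $(\rho_\xi,\kappa_\xi)$ are. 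Because $\P$ is trivial below $\kappa_0$ it is $<\kappa_0$-closed, hence adds no new subsets of $\kappa_0$; in particular it adds no reals and preserves $\continuum$. It is a reverse Easton iteration (a tame class forcing when $\delta=\Ord$), so it preserves $\ZFC$, and standard arguments as in Easton's theorem show it preserves every inaccessible cardinal and creates none; the cardinals it does collapse lie only in intervals $(\lambda^{+},2^{\lambda})$ with $\lambda\ge\kappa_0$ regular, and as $2^{\kappa_\xi}<\kappa_{\xi+1}$ for all $\xi$ these intervals contain no inaccessibles --- so the set of inaccessible cardinals is exactly that of $V$.

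Then I would verify categoricity in $V[G]$, where the inaccessibles are exactly those of $V$. For finite $\mu$, $V[G]_{\kappa_\mu}$ still satisfies ``there are exactly $\mu$ inaccessible cardinals,'' which already characterizes it among all inaccessible levels. Now fix $\xi$ with $\omega\le\xi<\beta$. In $V[G]_{\kappa_\xi}$ the inaccessibles are bounded with supremum $\rho_\xi$, and by construction $2^{\rho_\xi^{+n}}=\rho_\xi^{+(n+2)}$ precisely when $n\in a_\xi$ (lower stages are too small, and higher stages too closed, to disturb this value). So, writing $\gamma$ for the definable ordinal ``the supremum of the inaccessible cardinals,'' the real $r:=\{m<\omega : 2^{\gamma^{+m}}=\gamma^{+(m+2)}\}$ is definable without parameters in $V[G]_{\kappa_\xi}$ and equals $a_\xi$ there. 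Let $T_\xi=\Th(V[G]_{\kappa_\xi})$ and suppose $V[G]_\lambda\models T_\xi$ for some inaccessible $\lambda$. Then the inaccessibles are bounded in $V[G]_\lambda$, so $\lambda$ is not a limit of inaccessibles, and $r=a_\xi\ne\emptyset$ there; if $\lambda=\kappa_\mu$ with $\mu<\omega$, then $V[G]_\lambda$ has only finitely many inaccessibles, contradicting $T_\xi$; if $\mu\ge\beta$, then $\GCH$ holds throughout $(\rho_\mu,\kappa_\mu)$, so $r=\emptyset\ne a_\xi$; and if $\omega\le\mu<\beta$, then $r=a_\mu$, whence $\mu=\xi$ by injectivity. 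So $\kappa_\xi$ is the unique inaccessible with $V[G]_{\kappa_\xi}\models T_\xi$; that is, $\ZFC_2+T_\xi$ is a categorical characterization of $V[G]_{\kappa_\xi}$. Together with the finite case, all of the first $\beta$ inaccessible cardinals are first-order theory categorical in $V[G]$.

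The main difficulty is really the first step: it is exactly the bound $\beta<\continuum^{+}$, via $\beta<\kappa_0$, that keeps each of the first $\beta$ inaccessibles from being a limit of inaccessibles, so that ``the supremum of the inaccessibles'' is a genuine definable anchor below its height --- a uniform site for the tag. (For larger $\beta$ some early $\kappa_\xi$ could be Mahlo, and Theorem~\ref{Theorem.Mahlo-not-first-order-theory-categorical} would then forbid theory categoricity outright.) The remaining work is the bookkeeping ensuring that the tail inaccessibles $\kappa_\mu$ with $\mu\ge\beta$ do not accidentally reproduce some $a_\xi$ --- which is why $\GCH$ is forced in their blocks --- together with the routine check that the (possibly class-length) reverse Easton iteration collapses no inaccessibles, creates none, and adds no reals.
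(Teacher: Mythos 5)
Your proof is correct and follows essentially the same route as the paper's: code pairwise distinct subsets of $\omega$ into the \GCH\ pattern just above the definable supremum $\rho_\xi$ of the inaccessibles below each of the first $\beta$ inaccessible cardinals (a bounded supremum precisely because $\beta<\continuum^+<\kappa_0$), using Easton-style forcing that is trivial below $\kappa_0$ and neither creates nor destroys inaccessibles, so that these levels acquire pairwise distinct first-order theories that also differ from the theory of every other inaccessible level. The one corner case to patch is a limit index $\xi$, where $\rho_\xi$ is singular and your iteration (acting only at regular cardinals) never realizes the coding instruction for $n=0$, so that the claim ``$2^{\rho_\xi^{+n}}=\rho_\xi^{+(n+2)}$ iff $n\in a_\xi$'' can fail and injectivity of the read-off reals could be spoiled; this is fixed by coding only at $\rho_\xi^{+n}$ for $n\ge 1$, or equivalently by choosing the $a_\xi$ to be distinct nonempty subsets of $\omega\setminus\{0\}$.
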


\begin{proof}
Assume that there are at least $\beta$ many inaccessible cardinals in $V$, where $\beta$ is an ordinal below $\continuum^+$. Let $\kappa_\alpha$ be the $\alpha$th inaccessible cardinal. Similarly, fix an enumeration $\<A_\alpha\mid\alpha<\beta>$ of distinct subsets $A_\alpha\of\omega$ in order type $\beta$, which is possible since $|\beta|$ has size at most continuum. Since each $\kappa_\alpha$ is regular and much larger than $\continuum$, none of the $\kappa_\alpha$ for $\alpha<\beta$ are limits of inaccessible cardinals. Therefore, in every such $V_{\kappa_\alpha}$, the inaccessible cardinals will be strictly bounded below $\kappa_\alpha$. We can therefore force so as to code $A_\alpha$ into the \GCH\ pattern at the first $\omega$ many regular cardinals above the supremum of the inaccessible cardinals below $\kappa_\alpha$. Using an Easton product, we can perform all this forcing at once, making a forcing extension $V[G]$ in which the coding is done for every $\alpha<\beta$, while neither creating nor destroying any inaccessible cardinals. In $V[G]$, the set $A_\alpha$ is definable in $V_{\kappa_\alpha}$ for $\alpha<\beta$, since this model can define the supremum of the inaccessible cardinals below $\kappa_\alpha$ and can observe the \GCH\ pattern at the next $\omega$ many successor cardinals. In particular, the particular sentences saying that the $n$th successor above that supremum does have the \GCH\ or does not are part of the theory of $V_{\kappa_\alpha}$. Since the particular patterns are all different, this means that these cardinals all have different theories. And since they also each think that $A_\alpha$ is not coded in this way into any smaller inaccessible cardinal, these theories will also differ from those of any $V_\kappa$ above every $\kappa_\alpha$. And so all these cardinals are first-order theory categorical in $V[G]$.
\end{proof}

In particular, if there are at least continuum many inaccessible cardinals, then the first continuum many of them can all be theory categorical. If the \GCH\ holds in the ground model, then the forcing also preserves all cardinals and cofinalities.

If we are willing to collapse cardinals, then the same idea shows much more, namely, that we can force any desired ordinal number of inaccessible cardinals to become sententially categorical.

\begin{theorem}
If $\beta$ is an ordinal, of any size, and there are $\beta$ many inaccessible cardinals above $\beta$, then there is a forcing extension $V[G]$, preserving all inaccessible cardinals above $\beta$ and creating no new ones, in which the first $\beta$ many inaccessible cardinals above $\beta$ each become first-order sententially categorical.
\end{theorem}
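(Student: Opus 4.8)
The plan is to collapse $\beta$ so as to make it countable, and then to code a well-ordering of $\omega$ of order type $\beta$ into the universe at a globally definable location below the least relevant inaccessible cardinal, so that each of the first $\beta$ many inaccessible cardinals above $\beta$ can, inside its own rank-initial segment, name its own index by a natural-number numeral and thereby isolate itself with a parameter-free sentence. Collapsing $\beta$ is unavoidable, since there are only countably many sentences; but merely collapsing $\beta$ does not suffice, because after the collapse a typical index $\alpha<\beta$ will not be absolutely definable, so the characterization ``there are exactly $\alpha$ inaccessible cardinals'' --- which works when $\alpha$ is absolutely definable, as discussed after the main definition --- is not directly available. The coding forcing is precisely what repairs this.

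First I would force with $\Coll(\omega,\beta)$, obtaining $V[G_0]$. This forcing has size $|\beta|$, which is strictly below every inaccessible cardinal of $V$ lying above $\beta$; hence all such inaccessible cardinals are preserved, and forcing never creates new inaccessible cardinals. In $V[G_0]$ the ordinal $\beta$ is countable, and every cardinal of $V$ in the interval $(\omega,\beta]$ --- in particular every inaccessible cardinal of $V$ that is $\le\beta$ --- has been collapsed. Let $\langle\kappa_\alpha\mid\alpha<\beta\rangle$ enumerate the first $\beta$ many inaccessible cardinals above $\beta$, which exist by hypothesis. Since $\kappa_0$ is now the least inaccessible cardinal of $V[G_0]$ and there is nothing inaccessible strictly between $\beta$ and $\kappa_0$, these $\kappa_\alpha$ are precisely the first $\beta$ many inaccessible cardinals of $V[G_0]$, and, 0-indexing, for every inaccessible cardinal $\lambda$ of $V[G_0]$ the inaccessible cardinals below $\lambda$ have order type exactly the index of $\lambda$ in the full enumeration of the inaccessible cardinals of $V[G_0]$.

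Next, working in $V[G_0]$, I would fix a well-ordering $\prec$ of $\omega$ of order type $\beta$ (available since $\beta$ is countable), let $w\of\omega$ code it under a fixed recursive pairing, and let $\pi\colon(\omega,\prec)\to(\beta,\in)$ be the transitive collapse. Then I would force, by a set forcing of size $<\kappa_0$ that is small relative to every inaccessible cardinal above $\beta$, to make $w$ recoverable without parameters inside every $V_\lambda$ with $\lambda\ge\kappa_0$ inaccessible --- for instance by coding $w$ into the $\GCH$ pattern on the cardinals $\omega_1,\omega_2,\omega_3,\ldots$, after a preliminary small forcing to secure the $\GCH$ there if needed. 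Call the resulting extension $V[G]$; being small relative to every inaccessible cardinal above $\beta$, the whole forcing preserves all of them, creates none, and with routine bookkeeping can be taken to preserve all cardinals and cofinalities. For each $\alpha<\beta$ put $n_\alpha=\pi^{-1}(\alpha)\in\omega$, and let $\sigma_\alpha$ be the first-order sentence asserting: decoding the well-ordering $\prec$ of $\omega$ from the real coded by the $\GCH$ pattern on $\omega_1,\omega_2,\ldots$, the set $\{m\in\omega\mid m\prec n_\alpha\}$ has order type equal to the order type of the set of inaccessible cardinals. Since $n_\alpha$ is a genuine numeral, $\sigma_\alpha$ uses no parameters. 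Now, inside $V[G]$, every model of $\ZFC_2$ is some $V_{\kappa_\gamma}$ for an inaccessible cardinal $\kappa_\gamma$ (Zermelo's theorem), and such a model is correct about cardinals and power sets below $\kappa_\gamma$ and contains $\omega_n$ for all $n$, so it recovers $w$, decodes $\prec$ correctly, and computes $\{m\mid m\prec n_\alpha\}$ to have order type $\alpha$; it also computes the order type of its own inaccessible cardinals to be $\gamma$. Thus $V_{\kappa_\gamma}\satisfies\sigma_\alpha$ iff $\alpha=\gamma$ when $\gamma<\beta$; and if $\gamma\ge\beta$ then the two order types are $\alpha$ and $\gamma$, hence unequal since $\alpha<\beta\le\gamma$, so $\sigma_\alpha$ fails there. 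Therefore $\kappa_\alpha$ is the unique inaccessible cardinal whose rank-initial segment satisfies $\sigma_\alpha$, so $\ZFC_2+\sigma_\alpha$ categorically characterizes $V_{\kappa_\alpha}$, and $\kappa_\alpha$ is first-order sententially categorical in $V[G]$, for every $\alpha<\beta$.

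The step I expect to be the main obstacle is the conceptual one isolated at the start: recognizing that the collapse alone is insufficient, and that one must code a well-ordering of order type $\beta$ into the universe at a definable location below $\kappa_0$ so that each $V_{\kappa_\alpha}$ can give a numeral name to its own index. Once that is in place, the remaining work is the standard small-forcing preservation of inaccessible cardinals together with a routine absoluteness check --- including the verification, carried out above, that the possibly many inaccessible cardinals lying above $\sup_{\alpha<\beta}\kappa_\alpha$ do not accidentally satisfy any $\sigma_\alpha$. The case $\beta<\omega$ is trivial and requires no forcing, being handled directly by the characterizations ``there are exactly $n$ inaccessible cardinals.''
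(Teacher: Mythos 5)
Your proposal is correct and follows essentially the same route as the paper: collapse $\beta$ to be countable, then use Easton forcing to code a well-ordering of $\omega$ of order type $\beta$ into the \GCH\ pattern at the $\aleph_n$, so that each $\alpha<\beta$ becomes parameter-free definable inside every inaccessible rank-initial segment, and characterize the $\alpha$th inaccessible cardinal by the sentence asserting that there are precisely $\alpha$ many inaccessible cardinals. Your extra verifications (the numeral formulation of $\sigma_\alpha$ and the check that inaccessible cardinals above the block fail every $\sigma_\alpha$) are just a more detailed rendering of the same argument.
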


\begin{proof}
We first force, if necessary, to make $\beta$ a countable ordinal, and then perform Easton forcing so as to arrange the \GCH\ pattern at the cardinals $\aleph_n$ to code a relation on $\omega$ with order type $\beta$. (An essentially similar argument is made in \cite[section~6]{Garland1967:Second-order-cardinal-characterizability}.) Altogether this forcing has size less than the inaccessible cardinals above $\beta$ and so they are all preserved to the resulting forcing extenion $V[G]$ and no new inaccessible cardinals are created. Meanwhile, the ordinal $\beta$ and indeed any particular $\alpha<\beta$ becomes definable in $V[G]$ by means of the \GCH\ pattern on the $\aleph_n$, and any inaccessible cardinal $\kappa$ can observe this coding in $V[G]_\kappa$. Thus, the $\alpha$th inaccessible cardinal $\kappa_\alpha$ in $V[G]$ is categorically characterized by the first-order sentence ``there are precisely $\alpha$ many inaccessible cardinals,'' which for $\alpha<\beta$ is expressible in $V[G]$ by a sentence in the language of set theory.
\end{proof}

That forcing collapsed all cardinals up to $\beta$, making it a countable ordinal. If we would seek to preserve all cardinals up to $\beta$, then we can achieve a similar result with theory categoricity in place of sentential categoricity.

\begin{theorem}
If $\beta$ is an ordinal, of any size, and there are $\beta$ many inaccessible cardinals above $\beta$, then there is a forcing extension $V[G]$, preserving all cardinals up to $\beta$ and preserving all inaccessible cardinals above $\beta$ and creating no new ones, in which the first $\beta$ many inaccessible cardinals above $\beta$ become first-order theory categorical.
\end{theorem}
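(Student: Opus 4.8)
The plan is to follow the proof of the preceding theorem, but, since we may no longer collapse cardinals up to $\beta$, to replace the preliminary collapse of $\beta$ by a preliminary forcing that enlarges $\continuum$, and then to code the needed diagnostic data into the $\GCH$ pattern at cardinals below $\kappa_0$, the least inaccessible cardinal above $\beta$ (there is ample room, since $|\beta|<\kappa_0$). The crux is a circularity: not being allowed to collapse, we cannot make the individual ordinals below $\beta$ definable, so we must settle for theory rather than sentential categoricity; and in order that $|\beta|$ many \emph{distinct} theories be available we need $\continuum\geq|\beta|$ in the extension, which forces the $\GCH$ to fail at \emph{every} cardinal in $[\aleph_0,|\beta|)$, so the customary device of coding along $\aleph_1,\aleph_2,\ldots$ is unavailable. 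The way out is to let precisely that wreckage single out $|\beta|$.

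First I would force the $\GCH$ (by the usual cardinal- and cofinality-preserving forcing, which also preserves every inaccessible cardinal and leaves $\beta$ and the inaccessibles above $\beta$ untouched), and then force with $\Add(\omega,\lambda)$, where $\lambda$ is chosen so that adding $\lambda$ Cohen reals leaves the $\GCH$ failing at exactly the cardinals of $[\aleph_0,|\beta|)$ and holding everywhere from $|\beta|$ up to $\kappa_0$ — concretely $\lambda=|\beta|$ if $|\beta|$ is a limit cardinal of uncountable cofinality, and $\lambda=|\beta|^+$ otherwise. This second step is c.c.c., hence preserves all cardinals and cofinalities; since $\lambda<\kappa_0$ it preserves every inaccessible cardinal above $\beta$ and creates no new ones; it makes $2^{\aleph_0}\geq|\beta|$, so that the extension has at least $|\beta|$ many distinct subsets of $\omega$; and it makes $|\beta|$ definable in every rank-initial segment $V_\kappa$ with $\kappa>|\beta|$, namely as the least cardinal at which the $\GCH$ holds — the definable launch pad we need.

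Next I would perform a further Easton-product forcing, of size $<\kappa_0$ and hence cardinal-preserving, inaccessible-preserving above $\kappa_0$, and creating no new inaccessibles, which flips the $\GCH$ value at the definable set of $|\beta|$ many successor cardinals $(|\beta|^+)^{+(1+\xi)}$, $\xi<|\beta|$ (all well below $\kappa_0$), so as to code a well-ordering $R$ of $|\beta|$ of order type $\beta$ together with a sequence $\langle B_j\mid j<|\beta|\rangle$ of pairwise distinct subsets of $\omega$ (which exists because $2^{\aleph_0}\geq|\beta|$). Let $V[G]$ be the resulting model; it preserves all cardinals up to $\beta$, preserves all inaccessibles above $\beta$, and creates none. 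In $V[G]$, from $|\beta|$ one first-order recovers $R$ and the $B_j$, then $\beta=\ot(|\beta|,R)$, the order isomorphism $e\colon(\beta,\in)\to(|\beta|,R)$, and finally $A_\xi:=B_{e(\xi)}$ for $\xi<\beta$ — all definable in every $V[G]_\kappa$ with $\kappa\geq\kappa_0$.

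It then remains to verify that for each $\alpha<\beta$ the cardinal $\kappa_\alpha$ — the $\alpha$th inaccessible above $\beta$, which the forcing left untouched, all such being $>\lambda$ — is first-order theory categorical. The essential local fact is that $\kappa_\alpha$ is \emph{not} a limit of inaccessible cardinals: since $\alpha<\beta<\kappa_0$, the supremum of the smaller inaccessibles has cofinality $\cf(\alpha)<\kappa_0$ and so is singular, leaving room below $\kappa_\alpha$; hence inside $V[G]_{\kappa_\alpha}$ the inaccessibles above $\beta$ form a \emph{set}, of order type $\alpha$, and so $\alpha$ is definable there. Take $T_\alpha$ to be $\ZFC_2$ together with: the sentence that the coding apparatus above $|\beta|^+$ is fully present and valid; the sentence asserting that $\alpha^*$, the order type of the inaccessibles above $\beta$, is a set and is $<\beta$; and, for each $n$, the sentence $\psi_n$ asserting $n\in A_{\alpha^*}$. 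A model of $\ZFC_2$ in $V[G]$ is some $V[G]_\mu$ with $\mu$ inaccessible; if $\mu\le\beta$ the apparatus is not fully present (it reaches above $\mu$) and the first sentence fails; if $\mu=\kappa_\gamma$ with $\gamma\geq\beta$, then either $\kappa_\gamma$ is a limit of inaccessibles (so $\alpha^*$ is not a set) or $\alpha^*=\gamma\geq\beta$, so the second sentence fails; if $\mu=\kappa_\gamma$ with $\gamma<\beta$ and $\gamma\neq\alpha$, then $\alpha^*=\gamma$ and $\{n:\psi_n\text{ holds}\}=A_\gamma\neq A_\alpha$, so some $\psi_n$-clause fails; and if $\mu=\kappa_\alpha$ all of $T_\alpha$ holds. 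Thus $V[G]_{\kappa_\alpha}$ is, up to isomorphism, the unique model of $\ZFC_2+T_\alpha$, so $\kappa_\alpha$ is first-order theory categorical. The single genuine obstacle is the circularity of the first paragraph; it is dissolved by the observation that the very damage enlarging the continuum does to the $\GCH$ is itself enough to make $|\beta|$ — and then, stepwise, everything else — first-order definable.
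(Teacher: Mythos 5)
Your second and third steps are sound, and they confront honestly a point the paper glosses over (that blowing up the continuum wrecks the usual coding region at the $\aleph_n$'s), but your first step rests on a false claim: there is no ``usual cardinal- and cofinality-preserving forcing'' of the $\GCH$. If $2^\gamma>\gamma^+$ in the ground model, then any extension with $2^\gamma=\gamma^+$ must collapse every cardinal in the interval $(\gamma^+,(2^\gamma)^V]$, since the old power set of $\gamma$ still injects into the new one. So if the ground model has $\GCH$ failures below $\beta$ (say $2^{\aleph_0}=\aleph_{\omega+1}$ while $\beta$ is much larger), your preliminary step collapses cardinals below $\beta$, violating precisely the clause ``preserving all cardinals up to $\beta$'' that distinguishes this theorem from the one before it. The damage is localized, however: you never need global $\GCH$, only $\GCH$ on the interval from $|\beta|$ up past your coding cardinals $(|\beta|^+)^{+(1+\xi)}$, $\xi<|\beta|$, all of which lie below $\kappa_0$. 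Forcing the $\GCH$ just on that interval collapses only cardinals above $\beta$ (which the theorem permits), has size below $\kappa_0$, and so preserves all inaccessible cardinals above $\beta$ and creates none; the failures of the $\GCH$ below $|\beta|$ that your definition of $|\beta|$ requires are then produced by the Cohen forcing itself, with no need to control the ground-model pattern down there. A related slip: ``of size $<\kappa_0$ and hence cardinal-preserving'' is not a valid inference (small forcing can collapse cardinals); what you want is that the Easton product is ${<}|\beta|^{++}$-closed and has suitable chain conditions over a region where the $\GCH$ holds, which the repaired first step supplies, and closure already gives preservation of all cardinals up to $\beta$.

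With that repair your argument goes through, and it is a genuinely different route from the paper's, which simply forces with c.c.c.\ forcing to push $\continuum$ beyond $\beta$ and then cites theorem \ref{Theorem.Continuum-many-theory-categorical-cardinals}, coding a distinct real into the $\GCH$ pattern in a block immediately above the supremum of the inaccessible cardinals below each $\kappa_\alpha$; you instead install one global table below $\kappa_0$ (a well-order of $|\beta|$ of type $\beta$ together with $|\beta|$ many distinct reals) and let each $V[G]_{\kappa_\alpha}$ identify itself by the order type of its bounded set of inaccessibles---your argument that this set is bounded is exactly the paper's regularity argument. Two small points: in $T_\alpha$ you must include $\neg\psi_n$ for $n\notin A_\alpha$ as well as $\psi_n$ for $n\in A_\alpha$ (or simply take $T_\alpha$ to be the full first-order theory of $V[G]_{\kappa_\alpha}$), as your own verification tacitly assumes; and the case $\mu\leq\beta$ in your uniqueness check is vacuous, since any inaccessible $\mu\leq\beta$ is at most $|\beta|\leq 2^{\aleph_0}$ in the extension and hence no longer a strong limit---which is also what guarantees that ``the inaccessibles above $\beta$'' are all the inaccessibles there are.
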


\begin{proof}
We may force with c.c.c.~forcing so as to push the continuum beyond $\beta$ and then apply theorem \ref{Theorem.Continuum-many-theory-categorical-cardinals}, which will make the first $\beta$ many inaccessible cardinals above $\beta$ all first-order theory categorical, while preserving all cardinals up to $\beta$.
\end{proof}

Let us show next that we can also arrange that the number of theory categorical cardinals is strictly between $\aleph_0$ and the continuum.

\begin{theorem}
It is relatively consistent that the number of first-order theory categorical cardinals is $\omega_1$, even when the continuum is larger than this, and even when the inaccessible cardinals form a proper class.
\end{theorem}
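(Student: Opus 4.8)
The plan is to produce the desired model in two forcing steps over a ground model of the \GCH\ with a proper class of inaccessible cardinals: first make \emph{exactly} $\aleph_1$ cardinals first-order theory categorical while the continuum is still $\aleph_1$, and then inflate the continuum with Cohen forcing without changing which cardinals are theory categorical. The delicate point will not be creating $\aleph_1$ categorical cardinals, which is theorem \ref{Theorem.Continuum-many-theory-categorical-cardinals}, but rather arranging the final count to be exactly $\aleph_1$ and not merely ``at most $\continuum$''; this will be squeezed from the two sides by keeping $\continuum=\aleph_1$ during the first step and then showing the second step is conservative.

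Since the hypothesis is a consistency assumption, I may assume without loss of generality that the ground model $V$ satisfies the \GCH\ and has a proper class of inaccessible cardinals (passing to $L$ if necessary, which preserves inaccessibility and forces the \GCH). Apply theorem \ref{Theorem.Continuum-many-theory-categorical-cardinals} with $\beta=\omega_1$ to obtain a forcing extension $V_1$ of $V$, preserving the continuum and the inaccessible cardinals and creating no new ones, in which the first $\omega_1$ inaccessible cardinals $\<\kappa_\alpha\mid\alpha<\omega_1>$ are all first-order theory categorical; recall that the mechanism there codes a sequence $\<A_\alpha\mid\alpha<\omega_1>$ of distinct reals---which we may additionally take all distinct from the fixed real $r_0$ coding the pattern ``the \GCH\ holds''---into the \GCH\ pattern at the first $\omega$ successor cardinals above $\sup\{$inaccessibles below $\kappa_\alpha\}$, so that $V_{\kappa_\alpha}$ recovers $A_\alpha$ while every inaccessible $V_\lambda$ with $\lambda\geq\sup_{\alpha<\omega_1}\kappa_\alpha$ recovers $r_0$ at the analogous block of cardinals. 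Since $V\satisfies\GCH$, the continuum of $V_1$ is $\aleph_1$; and since every first-order theory categorical cardinal $\kappa$ is determined by $\Th(V_\kappa)\in\R$, there are at most $\aleph_1$ of them. Combined with the $\aleph_1$ just produced, $V_1$ has exactly $\aleph_1$ first-order theory categorical cardinals.

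Next force over $V_1$ with $\Add(\omega,\omega_2)$, obtaining $V_2$ with $\continuum=\aleph_2>\aleph_1$; this c.c.c.\ forcing preserves all cardinals and cofinalities, preserves all inaccessibles, and adds no new ones. On the one hand, a routine computation shows this forcing does not change $2^{\aleph_\xi}$ at any uncountable cardinal $\aleph_\xi$ carrying a coding bit from the first step (it alters only $2^{\aleph_0}$ and possibly $2^{\aleph_1}$, but $2^{\aleph_1}$ already exceeded $\aleph_1$ in $V_1$, and $2^{\aleph_\xi}$ for $\aleph_\xi\geq\aleph_2$ is unchanged by a nice-name count), and it leaves untouched all the cardinals near the $\kappa_\alpha$; hence in $V_2$ each $V_{\kappa_\alpha}$ still recovers $A_\alpha$ while every other inaccessible $V_\lambda$ recovers some $A_\gamma$ with $\gamma\neq\alpha$ or the default real $r_0\neq A_\alpha$, so $\Th(V_{\kappa_\alpha})$ stays distinct from every other $\Th(V_\lambda)$ and $\kappa_\alpha$ remains first-order theory categorical. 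On the other hand $\Add(\omega,\omega_2)$ is weakly homogeneous and lightface definable, and every inaccessible $\mu$ exceeds $\omega_2$, so $V_\mu^{V_2}=V_\mu^{V_1}[G]$ and, for each parameter-free $\varphi$, $V_\mu^{V_2}\satisfies\varphi$ exactly when $V_\mu^{V_1}$ satisfies the lightface sentence ``$\one\forces_{\Add(\omega,\omega_2)}\varphi$''; hence $V_\mu^{V_1}\eleequiv V_\lambda^{V_1}$ implies $V_\mu^{V_2}\eleequiv V_\lambda^{V_2}$, so every inaccessible that failed first-order theory categoricity in $V_1$ still fails it in $V_2$. Therefore $V_2$ has exactly the $\aleph_1$ first-order theory categorical cardinals inherited from $V_1$, while the continuum is $\aleph_2$ and the inaccessibles form a proper class, as required.

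I expect the main obstacle to be exactly the bookkeeping in the last paragraph: verifying that the \GCH-coding performed in $V_1$ survives $\Add(\omega,\omega_2)$ at every relevant cardinal, and that the inaccessibles above $\sup_{\alpha<\omega_1}\kappa_\alpha$ continue to read off $r_0$ rather than any $A_\alpha$---for this is precisely what prevents the continuum blow-up from manufacturing new theory categorical cardinals and so pins the count down to exactly $\aleph_1$.
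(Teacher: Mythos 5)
Your proof is correct and follows essentially the same route as the paper: use the coding construction of theorem \ref{Theorem.Continuum-many-theory-categorical-cardinals} to pin the number of first-order theory categorical cardinals at exactly $\omega_1$ while $\continuum=\aleph_1$, and then inflate the continuum with definable, homogeneous Cohen forcing, which preserves the coding and, by the Boolean-value/homogeneity argument, creates no new instances of categoricity. The only differences are cosmetic---you arrange \GCH\ before the coding rather than forcing \CH\ afterwards, and you spell out the homogeneity argument that the paper merely cites.
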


\begin{proof}
By theorem \ref{Theorem.Continuum-many-theory-categorical-cardinals}, we may begin with a model of set theory $V$ in which there are continuum many inaccessible cardinals that are first-order theory categorical. And we may suppose as well that $V$ has abundant inaccessible cardinals, if we like. By forcing if necessary, we may also assume that the continuum hypothesis holds, since this forcing is small, it neither creates nor destroys any inaccessible cardinals, and if the first-order theory categorical cardinals are categorical in the way described in theorem \ref{Theorem.Continuum-many-theory-categorical-cardinals}, by coding reals into the \GCH\ pattern in the blocks below the first continuum (now $\omega_1$) many inaccessible cardinals, then these cardinals will remain first-order theory categorical after forcing the \CH. So we have a model with exactly $\omega_1$ many first-order theory categorical cardinals. We may now force to $V[G]$ by adding any number of Cohen reals via $\Add(\omega,\theta)$, for some definable cardinal $\theta$, such as $\theta=\aleph_3$. This forcing is small, definable and homogeneous, and so it preserves all the inaccessible cardinals, preserves the coding making the first $\omega_1$ many inaccessible cardinals first-order theory categorical, and it creates no new instances of categoricity. So $V[G]$ has the desired features.
\end{proof}

The argument is extremely flexible, and we could have arranged to have exactly $\aleph_{17}$ many first-order theory categorical cardinals, while the continuum is $\aleph_{\omega^2+5}$, or whatever, in diverse other possible combinations.


Let us complete this section with a few observations about the nonabsoluteness of categoricity between a model of set theory and its forcing extensions. Any inaccessible cardinal can be made into the least inaccessible cardinal of a forcing extension (see \cite{Carmody2017:Killing-them-softly}), and this will be first-order sententially categorical. Can we do it, however, while preserving all inaccessible cardinals?

\begin{question}\label{Question.Every-inaccessible-made-categorical}
Can every inaccessible cardinal become first-order sententially categorical in a forcing extension with the same inaccessible cardinals?
\end{question}

Yes, indeed, this is possible.

\begin{theorem}\label{Theorem.Forcing-categoricity}
If $\kappa$ is an inaccessible cardinal, then there is a forcing extension with exactly the same inaccessible cardinals in which $\kappa$ is first-order sententially categorical. Indeed, any countable collection of inaccessible cardinals can be made first-order sententially categorical in a forcing extension with exactly the same inaccessible cardinals.
\end{theorem}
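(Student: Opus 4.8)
The plan is to take as the categorical characterization the single first-order sentence
\[
\sigma\ \equiv\ \text{``the cardinals }\gamma\text{ with }2^\gamma>\gamma^+\text{ are unbounded in }\Ord\text{,''}
\]
and to force $V_\kappa$ to be the \emph{only} inaccessible level satisfying $\sigma$. The point is that $V_\lambda\satisfies\sigma$ exactly when the class of $\GCH$-failures is cofinal in $\lambda$, so it is enough to arrange a global $\GCH$ pattern, by forcing that preserves all inaccessible cardinals and makes no new ones, in which the failures of $\GCH$ form an unbounded set $X\of\kappa$ whose limit points below $\kappa$ contain no inaccessible cardinal, while $\GCH$ holds at every other cardinal. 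Then $V_\kappa\satisfies\sigma$; for an inaccessible $\lambda>\kappa$ the $\GCH$-failures all lie below $\kappa$ and so are bounded in $V_\lambda$; and for an inaccessible $\lambda<\kappa$ the ordinal $\lambda$ is not a limit point of $X$, so again the $\GCH$-failures are bounded in $V_\lambda$. Hence $\sigma$ characterizes $V_\kappa$ over $\ZFC_2$.

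Carrying this out takes three steps. Note first that an unbounded $X\of\kappa$ with no inaccessible limit points exists only if the inaccessible cardinals below $\kappa$ are not stationary in $\kappa$, i.e.\ only if $\kappa$ is not Mahlo; and this is anyway unavoidable, since by Theorem~\ref{Theorem.Mahlo-not-first-order-theory-categorical} the cardinal $\kappa$, being first-order theory categorical in the final model, cannot be Mahlo there. So one begins by forcing $\GCH$ to hold, by the usual reverse Easton forcing, which preserves all inaccessible cardinals and creates no new ones. Next comes a preparatory iteration that destroys the Mahloness---and, more generally, all the reflection-type largeness (weak compactness, indescribability, measurability, and so on)---of $\kappa$ and of every inaccessible cardinal below it: proceeding upward, one kills the Mahloness of each inaccessible $\alpha$ by shooting a club through its (now non-reflecting) set of inaccessibles, which is sufficiently distributive at each stage to preserve $V_\alpha$, and one finishes by shooting a club $C\of\kappa$ disjoint from the resulting non-stationary set of inaccessibles below $\kappa$; all of this preserves $\GCH$ below $\kappa$, preserves every inaccessible cardinal, and creates no new ones. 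Finally, working in the prepared model, one picks an unbounded $X\of\kappa$ whose limit points all lie in $C$---for instance $X=\set{\mu^+:\mu\in C'}$ for a suitable cofinal $C'\of C$, whose limit points lie in $C$ and hence are never inaccessible---and performs the Easton product forcing $\prod_{\gamma\in X}\Add(\gamma,\gamma^{++})$, which under $\GCH$ preserves all cardinals and cofinalities, preserves every inaccessible cardinal, creates no new ones, and makes $\GCH$ fail at exactly the cardinals in $X$. In the resulting model $V_\kappa\satisfies\sigma$ and no other inaccessible level does, so $\kappa$ is first-order sententially categorical.

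For a countable collection $\kappa_0<\kappa_1<\cdots$ of inaccessible cardinals one runs the same construction but distributes the $\GCH$-failures in consecutive blocks: after the (simultaneous) preparatory steps, one chooses for each $n$ an unbounded $Y_n\of(\kappa_{n-1},\kappa_n)$ (with $\kappa_{-1}:=0$) whose limit points below $\kappa_n$ avoid the inaccessibles, and forces $\GCH$ to fail at exactly the cardinals in $X=\Union_n Y_n$. Then the inaccessible levels $V_\lambda$ satisfying $\sigma$ are precisely the $V_{\kappa_n}$, and $V_{\kappa_n}$ sees exactly $n$ smaller ones, so ``$\sigma$, and there are exactly $n$ inaccessible cardinals $\delta$ with $V_\delta\satisfies\sigma$'' categorically characterizes $V_{\kappa_n}$, making each $\kappa_n$ first-order sententially categorical. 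The step I expect to be the main obstacle is the preparatory iteration: one must verify that destroying the Mahloness and higher reflection of $\kappa$ and of all inaccessibles below it can genuinely be arranged so as to preserve every inaccessible cardinal and add no new ones---routine in spirit but delicate, since for a weakly compact or larger $\kappa$ the stationarity of the inaccessibles can be club-shot away only after the relevant indescribability has first been removed, all while keeping every inaccessible cardinal both a strong limit and regular.
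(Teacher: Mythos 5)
Your core strategy---making the \GCH\ failures cofinal in $\kappa$ while no other inaccessible level sees an unbounded class of failures, which forces you first to make the inaccessibles below $\kappa$ non-stationary---is exactly the idea of the paper's proof. The genuine gap is the step you yourself flag as the main obstacle: the ``preparatory iteration'' that is supposed first to destroy the Mahloness, weak compactness, indescribability, and so on, of $\kappa$ and of every inaccessible below it. No such preparation is needed, and as you describe it the step is both unjustified and internally inconsistent: you invoke the distributivity of club-shooting at each inaccessible $\alpha<\kappa$ with no prior preparation at $\alpha$, while insisting that at $\kappa$ itself the club can be shot only after ``the relevant indescribability has first been removed.'' The standard fact, and the one the paper uses, is that the poset whose conditions are closed bounded subsets of $\kappa$ containing no inaccessible cardinals, ordered by end-extension, has $\delta$-closed dense subsets for every $\delta<\kappa$: it is dense to extend a condition so that its maximum exceeds $\delta$, and the supremum of an end-extending chain of length less than $\delta$ of such conditions lies above $\delta$ yet has cofinality less than $\delta$, so it is singular and may be added to close off the union. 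This holds whether $\kappa$ is Mahlo, weakly compact, or measurable. Hence the forcing adds no new ${<}\kappa$-sequences, preserves $V_\kappa$ and the inaccessibility of $\kappa$, and, having size $\kappa$, preserves all larger inaccessible cardinals; it directly adds a club $C\of\kappa$ avoiding the inaccessibles, killing Mahloness with no prior removal of any reflection properties. With that correction your single-cardinal argument collapses to essentially the paper's: shoot the club, then do Easton forcing arranging the \GCH\ below $\kappa$ to fail exactly at (successors of) points of $C$, so that $\kappa$ is the unique inaccessible limit of \GCH\ failures.

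A second, smaller gap is in the countable case. You index the collection as $\kappa_0<\kappa_1<\cdots$ and characterize $\kappa_n$ by ``$\sigma$ plus there are exactly $n$ inaccessible $\delta$ with $V_\delta\satisfies\sigma$,'' which handles only collections of order type at most $\omega$. A countable collection can have arbitrary countable order type $\beta$, and for non-definable $\beta$, or non-definable positions $\alpha<\beta$, the assertion ``there are exactly $\alpha$ many such $\delta$'' is not available as a first-order sentence. The paper resolves this by first coding a real for $\beta$ into the \GCH\ pattern at the cardinals $\aleph_n$, which makes every $\alpha\leq\beta$ definable inside every inaccessible $V_\lambda$, and then performing the club-shooting and coding trick at each cardinal of the collection separately, so that the $\alpha$th cardinal on the list becomes the $\alpha$th inaccessible limit of \GCH\ failures.
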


\begin{proof}
To handle just one inaccessible cardinal, we claim that every inaccessible cardinal can be characterized in a forcing extension as the least inaccessible cardinal that is a limit of failures of the \GCH. To see this, suppose that $\kappa$ an inaccessible cardinal. Let $V[C]$ be the forcing extension arising from the forcing that shoots a club set $C\of\kappa$ avoiding the inaccessible cardinals. Conditions are closed bounded sets in $\kappa$ with no inaccessible cardinals, ordered by end-extension. This forcing has $\delta$-closed dense sets for every $\delta<\kappa$, and therefore it adds no new ${<}\kappa$-sequences over the ground model. It therefore preserves $V_\kappa$ and hence all inaccessible cardinals below $\kappa$; and it preserves the inaccessibility of $\kappa$ itself; and being size $\kappa$, it also preserves all larger inaccessible cardinals. In $V[C]$, therefore, we have ensured that $\kappa$ is inaccessible, but not Mahlo, because we now have a club in $\kappa$ avoiding the inaccessible cardinals. In particular, $\kappa$ is a limit of elements of $C$, but no other inaccessible cardinal is a limit of elements of $C$. Let $V[C][G]$ be the subsequent Easton-support forcing extension that forces the \GCH\ up to $\kappa$, except at the successors of cardinals in $C$, where the continuum is the double successor. This forcing preserves all inaccessible cardinals.

In $V[C][G]$, the cardinal $\kappa$ is an inaccessible limit of cardinals at which the \GCH\ fails, namely, the successors of the elements of $C$. But no smaller inaccessible cardinal has that property, because $C$ is bounded below every smaller inaccessible cardinal. So $\kappa$ is the least inaccessible cardinal that is a limit of failures of the \GCH, and this property provides a first-order sentential categorical characterization of $\kappa$ in the forcing extension $V[C][G]$.

To handle countably many, suppose that the order type is $\beta$. By coding into the \GCH\ pattern at the cardinals $\aleph_n$, we can make a real coding $\beta$ first-order definable, and every $\alpha\leq\beta$ will become first-order definable inside every inaccessible $V_\kappa$. Next, we can do the club-shooting trick at each of the $\beta$ many cardinals separately, and so the $\alpha$th cardinal on our list will become the $\alpha$th inaccessible cardinal that is a limit of failures of the \GCH. So all of the $\beta$ many inaccessible cardinals will become first-order sententially categorical in the forcing extension.
\end{proof}

The theorem is an instance of the large cardinal killing-them-softly phenomenon promulgated by Erin Carmody~\cite{Carmody2017:Killing-them-softly}, who proved in a variety of cases that one can often slightly reduce (as little reduced as possible) the large cardinal strength of a large cardinal in a forcing extension. Theorem \ref{Theorem.Forcing-categoricity} achieves this, if we view noncategoricity as a largeness notion---we have killed the noncategoricity of $\kappa$ while preserving its inaccessibility. But one might aspire to sharper (or we should say \emph{softer}) categoricity killing-them-softly results. For example, can we force any inaccessible cardinal that is not second-order theory categorical to become second-order theory categorical, but neither first-order theory categorical nor second-order sententially categorical? Can we force any inaccessible cardinal that is not first-order theory categorical to become first-order theory categorical, but not second-order sententially categorical? Can we force any inaccessible cardinal that is not second-order sententially categorical to become second-order sententially categorical, but not first-order theory categorical? These would be softer killings of noncategoricity than what we achieved in theorem \ref{Theorem.Forcing-categoricity}, and we shall look forward to further investigations of these puzzles.

\section{Complete implication diagram}

Every first-order sententially categorical cardinal is of course also first-order theory categorical, since the sentence is part of the theory, and similarly with second-order; and first-order categoricity immediately implies second-order categoricity for sentences or theories, since first-order assertions count as (trivial) instances of second-order assertions. What we aim to do now is prove that beyond these immediate implications, there are no other provable implications.

\begin{theorem}\label{Theorem.Complete-implication-diagram}
Assuming the consistency of sufficiently many inaccessible cardinals, the complete provable implication diagram for the categoricity notions is as follows:
$$\begin{tikzpicture}[yscale=1.3]\footnotesize
\node (fosc) at (0,0) {\parbox[c]{3.5cm}{\centering $\kappa$ is first-order\\ sententially categorical}};
\node (fotc) at (3,1) {\parbox[c]{3cm}{\centering $\kappa$ is first-order\\ theory categorical}};
\node (sosc) at (-3,1) {\parbox[c]{3.5cm}{\centering $\kappa$ is second-order\\ sententially categorical}};
\node (sotc) at (0,2) {\parbox[c]{3cm}{\centering $\kappa$ is second-order\\ theory categorical}};
\draw[-Stealth] (sosc) to[bend left=20] (sotc.west);
\draw[-Stealth] (fotc) to[bend right=20] (sotc.east);
\draw[-Stealth] (fosc.west) to[bend left=20] (sosc);
\draw[-Stealth] (fosc.east) to[bend right=20] (fotc);
\end{tikzpicture}$$
None of these implications are reversible and no other implications are provable.
\end{theorem}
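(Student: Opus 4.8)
The plan is first to dispatch the five implications already present in the diagram---first-order sententially $\Rightarrow$ first-order theory, first-order theory $\Rightarrow$ second-order theory, first-order sententially $\Rightarrow$ second-order sententially, second-order sententially $\Rightarrow$ second-order theory, and the composite first-order sententially $\Rightarrow$ second-order theory---which hold immediately, since a characterizing sentence is a special case of a characterizing theory and a first-order sentence or theory is a trivial second-order one. The whole content is therefore the negative part: writing FOSC, FOTC, SOSC, SOTC for the four notions, one must refute exactly the seven ordered pairs lying outside the transitive closure of the four arrows, namely SOSC$\to$FOSC, SOSC$\to$FOTC, FOTC$\to$FOSC, FOTC$\to$SOSC, SOTC$\to$FOSC, SOTC$\to$FOTC and SOTC$\to$SOSC. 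I would do this with just two separating cardinals.

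The first witness is the least Mahlo cardinal $\mu$, taken in a model possessing a Mahlo cardinal. By Theorem~\ref{Theorem.Least-Mahlo} it is second-order sententially categorical but not first-order theory categorical, hence---since FOSC $\Rightarrow$ FOTC---not first-order sententially categorical either; and being SOSC it is a fortiori SOTC. So $\mu$ is at once SOSC and SOTC yet neither FOSC nor FOTC, and this one example already refutes SOSC$\to$FOSC, SOSC$\to$FOTC, SOTC$\to$FOSC and SOTC$\to$FOTC; in particular it witnesses the non-reversibility of FOSC $\Rightarrow$ SOSC and of FOTC $\Rightarrow$ SOTC.

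The second witness handles the three remaining pairs FOTC$\to$FOSC, FOTC$\to$SOSC and SOTC$\to$SOSC. Work now in a model with a proper class of inaccessible cardinals, let $\xi$ be the supremum of the sententially categorical cardinals (it makes no difference whether one means first-order, second-order, or any absolutely definable order, by Theorem~\ref{Theorem.Supremum-sententially-categorical-cardials}), and let $\lambda$ be the least inaccessible cardinal strictly above $\xi$. Then $\lambda$ is not sententially categorical of first or second order, for otherwise $\lambda$---or, using Theorem~\ref{Theorem.Kappa^Plus-sententially-categorical}, its successor $\lambda^\Plus$---would be a sententially categorical cardinal exceeding the supremum $\xi$ of all such cardinals. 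On the other hand $\lambda$ is first-order theory categorical by Theorem~\ref{Theorem.Limits-of-sententially-categorical-are-theory-categorical}: since $\lambda$ is the least inaccessible above $\xi$, every inaccessible below $\lambda$ is at most $\xi$; there is no largest sententially categorical cardinal, since a largest one $\kappa$ would have $\kappa^\Plus$ (which exists below $\lambda$, as $\lambda$ is an inaccessible above $\kappa$) first-order sententially categorical by Theorem~\ref{Theorem.Kappa^Plus-sententially-categorical}, absurd; hence the sententially categorical cardinals are cofinal in $\xi$, which is the supremum of the inaccessibles below $\lambda$, so the hypothesis of Theorem~\ref{Theorem.Limits-of-sententially-categorical-are-theory-categorical} is met. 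Thus $\lambda$ is FOTC (hence SOTC) but neither SOSC nor FOSC, which refutes the three remaining implications---and in particular shows FOSC $\Rightarrow$ FOTC and SOSC $\Rightarrow$ SOTC are strict---so that all four displayed arrows are strict and all seven other implications fail.

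The point I expect to require most care is the second construction. One must guarantee that such a $\lambda$ exists, which is why a proper class of inaccessible cardinals (or at least ``sufficiently many'') is assumed; the two witnesses can simply be produced in separate models, or placed in one by working inside $V_\nu$ for $\nu$ an inaccessible limit of inaccessibles above a Mahlo cardinal. One must also check that the sententially categorical cardinals really are cofinal below $\lambda$, and here the upward-transmission Theorem~\ref{Theorem.Kappa^Plus-sententially-categorical} is exactly the tool that rules out a largest one. As a hypothesis-cheaper alternative for the second witness, one may instead apply Theorem~\ref{Theorem.Continuum-many-theory-categorical-cardinals} with $\beta=\continuum$: among the continuum-many first-order theory categorical cardinals it produces, at most countably many are second-order sententially categorical, since there are only countably many second-order sentences, so some FOTC cardinal is not SOSC.
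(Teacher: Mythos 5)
Your argument is correct for the statement as it stands, and it is leaner than the paper's own route. The paper derives this theorem from the refined Venn-diagram result (theorem \ref{Theorem.Venn-diagram}), inhabiting all five cells---in particular producing a cardinal that is both first-order theory categorical and second-order sententially categorical yet not first-order sententially categorical (for instance the least inaccessible definably Mahlo cardinal, theorem \ref{Theorem.Least-definably-Mahlo-first-order}), and a cardinal that is second-order theory categorical but neither second-order sententially categorical nor first-order theory categorical---so as to rule out even implications obtained by combining the notions. You correctly observe that the pairwise diagram needs only two separating witnesses, one SOSC-but-not-FOTC and one FOTC-but-not-SOSC, and that these refute all seven missing arrows and give the strictness of the four displayed ones. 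Your second witness (the least inaccessible above the supremum of the sententially categorical cardinals, shown to be first-order theory categorical via theorem \ref{Theorem.Limits-of-sententially-categorical-are-theory-categorical}, with theorem \ref{Theorem.Kappa^Plus-sententially-categorical} ruling out a largest sententially categorical cardinal) is essentially the paper's argument for the red region, run globally rather than inside the first $\omega_1$ inaccessible cardinals; both versions work, and your cofinality check for the hypothesis of theorem \ref{Theorem.Limits-of-sententially-categorical-are-theory-categorical} is the right point to verify. What your proof buys is brevity; what it does not deliver is the paper's stronger conclusion that no Boolean combination of the notions yields a new implication, which the present statement does not require since the paper states that as the separate refinement.

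One calibration point. Your first witness is the least Mahlo cardinal, so as written the proof requires the consistency of a Mahlo cardinal, which is strictly stronger than the consistency of any number of inaccessible cardinals and hence stronger than the stated hypothesis of ``sufficiently many inaccessible cardinals.'' The paper proves theorem \ref{Theorem.Second-order-sententially-categorial-not-first-order-theory} precisely to avoid this: the existence of an inaccessible cardinal that is not first-order theory categorical (guaranteed by $\continuum^+$ many inaccessible cardinals) already yields a second-order sententially categorical cardinal that is not first-order theory categorical. Substituting that theorem for theorem \ref{Theorem.Least-Mahlo} brings your hypotheses in line with the theorem as stated (your fallback for the second witness via theorem \ref{Theorem.Continuum-many-theory-categorical-cardinals} is likewise fine). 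A tiny slip worth noting: when ruling out a largest sententially categorical cardinal $\kappa$, its successor $\kappa^\Plus$ need not lie strictly below $\lambda$---it may equal $\lambda$---but the contradiction with the maximality of $\kappa$ goes through either way.
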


In fact, we shall prove the following more refined result, which shows that we cannot even get new implications by combining components of the diagram.

\begin{theorem}\label{Theorem.Venn-diagram}
Implications between the various categoricity notions are those shown in the following Venn diagram, and if there are at least $\continuum^+$ many inaccessible cardinals, then every cell of the diagram is inhabited.
$$\begin{tikzpicture}[yscale=1]\footnotesize\label{Figure.Venn-diagram}
\draw[fill=Orchid!20] (0,1.5) ellipse [x radius = 5cm, y radius = 3cm];
\draw (0,3.5) node {\parbox[c]{2cm}{\centering second-order\\ theory\\ categorical}};
\draw[fill=blue,fill opacity=.2] (1.5,1) ellipse [x radius = 3cm, y radius = 1.75cm,rotate=30];
\draw[rotate=30] (2.5,0) node {\parbox[c]{2cm}{\centering second-order\\ sententially\\ categorical}};
\draw[fill=red,fill opacity=.2] (-1.5,1) ellipse [x radius = 3cm, y radius = 1.75cm,rotate=-30];
\draw[rotate=-30] (-2.5,0) node {\parbox[c]{2cm}{\centering first-order\\ theory\\ categorical}};
\draw[fill=yellow!10] (0,0) circle [radius = .9cm] node {\parbox[c]{2cm}{\centering first-order\\ sententially\\ categorical}};
\end{tikzpicture}$$
\end{theorem}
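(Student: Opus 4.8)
The plan is to exhibit, for each of the four nonempty Boolean combinations of the four categoricity classes, an inaccessible cardinal witnessing exactly that combination, using the coding techniques already developed in the paper. Recall the containments: first-order sentential $\Rightarrow$ first-order theory and second-order sentential, and each of those $\Rightarrow$ second-order theory; and second-order theory is the outermost class. So the Venn diagram has exactly four cells to populate: (a) the innermost region, first-order sententially categorical; (b) first-order theory categorical but not first-order sentential and not second-order sentential; (c) second-order sententially categorical but not first-order theory categorical (hence also not first-order sentential); and (d) second-order theory categorical but in none of the three smaller classes. I would begin by fixing a ground model with at least $\continuum^+$ many inaccessible cardinals and, by Theorem~\ref{Theorem.Continuum-many-theory-categorical-cardinals}, passing to a forcing extension in which the first $\continuum^+$ many inaccessible cardinals are arranged as convenient; throughout I use the absoluteness facts (Theorems~\ref{Theorem.Sentential-categoricity-absolute-to-Vtheta}, \ref{Theorem.Kappa^Plus-sententially-categorical}, \ref{Theorem.If-kappa-fresh-then-kappa-Plus-theory-categorical}) freely.

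Cell (a) is immediate: the least inaccessible cardinal is first-order sententially categorical via ``there are no inaccessible cardinals,'' and it trivially lies in all four classes. For cell (c), the least Mahlo cardinal works by Theorem~\ref{Theorem.Least-Mahlo}: it is second-order sententially categorical (being Mahlo is $\Pi^1_1$, take the least one) but not first-order theory categorical by Theorem~\ref{Theorem.Mahlo-not-first-order-theory-categorical}; since first-order sentential implies first-order theory, it is also not first-order sentential, so it sits in the second-order-sentential region minus the first-order-theory region, which is cell (c). For cell (d) I would take a cardinal $\kappa$ that is second-order theory categorical but not in any of the three inner classes: here I use that there are only countably many second-order \emph{sentences} but continuum many second-order \emph{theories}, so in a model where the first $\continuum^+$ inaccessibles are second-order theory categorical (via the $\GCH$-pattern coding of distinct reals, as in Theorem~\ref{Theorem.Continuum-many-theory-categorical-cardinals}, run at the second-order level), all but countably many of them fail to be second-order sententially categorical; taking one whose coded real is not, in addition, usable for a first-order theory characterization — which I arrange by coding a second-order-but-not-first-order feature, e.g. a $\Pi^1_1$ statement about $\kappa$ that no first-order sentence pins down, layered on top — gives a cardinal in cell (d). The genuinely delicate cell is (b): a cardinal that is first-order theory categorical but fails first-order \emph{sentential} categoricity and fails second-order sentential categoricity. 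The point is that theory categoricity can require infinitely much information (a whole theory) to distinguish $V_\kappa$, while any single second-order sentence would be shared with some other inaccessible level. To build this, I would arrange a ground model configuration in which the first-order theory of $V_\kappa$ differs from every other $V_\lambda$ only through an infinite schema of coded facts (say, the full $\GCH$ pattern on $\aleph_n$ for $n<\omega$ coding a real $A_\kappa$ that is distinct from the corresponding real of every other inaccessible level), while ensuring that no finite subschema — and hence no single sentence, first or second order — isolates $\kappa$; concretely one wants, for every sentence $\sigma$ true in $V_\kappa$, some other inaccessible $V_\lambda$ with $V_\lambda\models\sigma$, which one gets by coding the reals $A_\lambda$ so densely that every individual sentence's extension among inaccessible levels remains unbounded.

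The main obstacle will be cell (b), specifically the simultaneous demands that (i) the first-order theory of $V_\kappa$ be unique among inaccessible levels, yet (ii) no first-order \emph{or second-order} sentence be unique to $V_\kappa$. Requirement (ii) is strong because a second-order sentence can express ``my coded real has such-and-such a $\Sigma^1_n$ property,'' so one must choose the family of coded reals $\{A_\lambda\}$ so that every arithmetic (indeed every second-order-expressible) property holding of $A_\kappa$ also holds of $A_\lambda$ for cofinally many other inaccessible levels $\lambda$, while the reals are nonetheless pairwise distinct so the full theories differ. I expect this can be done by a careful bookkeeping/diagonalization over the countably many second-order sentences: enumerate them, and at each inaccessible level in a long block decide the $\GCH$ pattern so as to keep every sentence's truth-set among the block members unbounded, a $\Delta^0_1$-style construction once one sets it up over a sufficiently long interval of inaccessibles. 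Then $\kappa$, chosen as a limit point of such a block (or the next inaccessible after one, invoking Theorem~\ref{Theorem.If-kappa-fresh-then-kappa-Plus-theory-categorical} to secure first-order theory categoricity from freshness), lands in cell (b). Finally I would note that all four constructions can be carried out simultaneously in a single forcing extension — the codings occupy disjoint blocks of cardinals below disjoint inaccessibles — so that if there are $\continuum^+$ many inaccessibles, one extension realizes all four cells at once, and by the upward-transmission and absoluteness theorems the witnesses persist, completing the verification that the diagram is exactly as drawn with every cell inhabited.
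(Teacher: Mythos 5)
Your enumeration of the cells is incomplete, and this is the most serious gap. The diagram has \emph{five} cells inside the outer ellipse, not four: besides the innermost first-order sentential region, the region of second-order sentential minus first-order theory, the region of first-order theory minus second-order sentential, and the outer region of second-order theory only, there is also the central cell of cardinals that are \emph{both} second-order sententially categorical \emph{and} first-order theory categorical but \emph{not} first-order sententially categorical. Your plan never addresses it. The paper inhabits this cell with the least inaccessible definably Mahlo cardinal (Theorem~\ref{Theorem.Least-definably-Mahlo-first-order}), or, from the stated hypothesis alone, with the least inaccessible cardinal that is not first-order sententially categorical: by Theorem~\ref{Theorem.Failures-of-sentential-categoricity-smaller} its defining property is that every first-order sentence true in it reflects to a smaller inaccessible level, and being least with this property is recorded in its first-order theory (giving first-order theory categoricity) and expressible by a single second-order sentence via the second-order definable truth predicate (giving second-order sentential categoricity), while the property itself rules out first-order sentential categoricity.

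A second structural problem: the theorem asserts outright that \emph{if} there are at least $\continuum^+$ many inaccessible cardinals \emph{then} every cell is inhabited, so passing to a forcing extension proves only a consistency statement, not the theorem. The paper needs no forcing here; every cell is populated by a least-counterexample/reflection argument directly from the hypothesis. Related difficulties infect your individual cells. For the blue cell you invoke the least Mahlo cardinal, but $\continuum^+$ many inaccessibles do not yield a Mahlo cardinal; the paper instead takes the least $\kappa$ admitting a smaller inaccessible $\delta$ with $V_\delta\equiv V_\kappa$ (such $\kappa$ exists by counting theories), which is second-order sententially categorical but not first-order theory categorical (Theorem~\ref{Theorem.Second-order-sententially-categorial-not-first-order-theory}). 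For the top cell your \GCH-pattern coding is self-defeating, since that coding produces \emph{first-order} theory categorical cardinals, exactly what must be excluded there; the paper instead takes the least $\kappa$ such that for every $n$ some smaller inaccessible level has the same $\Sigma^1_n$ theory, and uses the fact that first-order truth is $\Delta^1_1$-definable, so agreement on $\Sigma^1_1$ theories already kills first-order theory categoricity, while sentence-by-sentence reflection kills second-order sentential categoricity and the defining property is captured by the second-order theory. For your cell (b), the proposed bookkeeping that toggles truth of arbitrary second-order sentences at inaccessible levels by \GCH\ coding is not justified (second-order truth at $V_\lambda$ is not freely controllable this way), and falling back on the next inaccessible above a fresh cardinal risks landing in the wrong cell, since by Theorem~\ref{Theorem.Kappa^Plus-sententially-categorical} such successors are often sententially categorical; the paper avoids all of this with a counting argument in the style of Theorem~\ref{Theorem.Limits-of-sententially-categorical-are-theory-categorical}, taking the first inaccessible above all of the countably many second-order sententially categorical cardinals among the first $\omega_1$ many inaccessibles.
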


The positive implications of the diagram are in each case easy to prove, and these correspond to the inclusions indicated in the Venn diagram. What remains is to prove that all the various cells of the diagram are inhabited. To begin with that, we have noted that if there are any inaccessible cardinals at all, then the least inaccessible cardinal is first-order sententially categorical, and so the yellow region at bottom is inhabited. Next, theorem \ref{Theorem.Least-Mahlo} shows that the least Mahlo cardinal is second-order sententially categorical but not first-order theory categorical, which shows that the blue region at the right is inhabited. But in fact we can weaken the hypothesis necessary for this as follows.

\begin{theorem}\label{Theorem.Second-order-sententially-categorial-not-first-order-theory}
If there is an inaccessible cardinal that is not first-order theory categorical (for example, if there are at least $\continuum^+$ many inaccessible cardinals), then there is an inaccessible cardinal that is second-order sententially categorical, but not first-order theory categorical.
\end{theorem}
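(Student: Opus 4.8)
The plan is to isolate a particular inaccessible cardinal whose failure of first-order theory categoricity is witnessed \emph{from below}, in a way that a single second-order sentence can detect. First I would observe that the hypothesis reduces to the existence of inaccessible cardinals $\delta<\lambda$ with $V_\delta\equiv V_\lambda$ (ordinary first-order elementary equivalence): if $\kappa$ is an inaccessible cardinal that is not first-order theory categorical, then, since $\kappa$ being first-order theory categorical amounts to its being the unique inaccessible cardinal with its first-order theory, there is another inaccessible cardinal $\kappa'$ with $V_\kappa\equiv V_{\kappa'}$, and writing the two in increasing order yields such a pair; and if there are at least $\continuum^+$ many inaccessible cardinals, then since there are only $\continuum$-many complete first-order theories in the language of set theory, the pigeonhole principle produces such a pair, so the parenthetical hypothesis is indeed sufficient. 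Let $\lambda$ be the \emph{least} inaccessible cardinal for which $V_\delta\equiv V_\lambda$ holds for some inaccessible $\delta<\lambda$; by the previous remark, $\lambda$ exists.

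It is then immediate that $\lambda$ is not first-order theory categorical, since by the choice of $\lambda$ there is another inaccessible cardinal, namely $\delta$, with $V_\delta\equiv V_\lambda$, and so no first-order theory can distinguish $V_\lambda$ from $V_\delta$.

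The substantive part is to show that $\lambda$ is second-order sententially categorical. The key point is that over a model $\langle V_\mu,\in\rangle$ of $\ZFC_2$ one has, in second-order logic, the satisfaction class for first-order truth in the whole structure, so the assertion $\tau$: ``there is an inaccessible cardinal $\delta$ with $V_\delta\equiv V$'' is expressible as a genuine second-order sentence---for $\delta$ below the height of the structure the theory $\Th(V_\delta)$ is first-order definable from $\delta$, while truth in the ambient structure is supplied by the second-order satisfaction class. (This is exactly the step that forces the passage to second order: by Tarski's theorem ``$V_\delta\equiv V$'' cannot be rendered first-order.) By the defining property of $\lambda$ we have $V_\lambda\satisfies\tau$. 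Moreover, for a \emph{set}-sized structure $V_{\delta'}$ with $\delta'$ below the height of the model, the statement ``$V_{\delta'}\satisfies\tau$'' is first-order expressible there and is absolute, so we may form the second-order sentence $\sigma$ consisting of $\tau$ together with ``no inaccessible $V_{\delta'}$ satisfies $\tau$.'' Now $V_\lambda\satisfies\sigma$: the first conjunct holds as just noted, and the second holds precisely because $\lambda$ was chosen least. And $V_\lambda$ is the only inaccessible model of $\ZFC_2+\sigma$: an inaccessible $\mu<\lambda$ fails $\tau$ by minimality of $\lambda$, while an inaccessible $\mu>\lambda$ sees that $V_\lambda\satisfies\tau$ (it contains $V_{\lambda+1}$, which suffices to evaluate $\tau$ over $V_\lambda$, and the evaluation is absolute) and so fails the second conjunct of $\sigma$. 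Since every model of $\ZFC_2$ is a $V_\mu$ for inaccessible $\mu$, this shows $\ZFC_2+\sigma$ categorically characterizes $V_\lambda$, so $\lambda$ is second-order sententially categorical, as desired.

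I expect the main obstacle to be the careful bookkeeping around expressibility and absoluteness: verifying that $\tau$ is a bona fide second-order sentence in the language of set theory (rather than one secretly requiring a truth predicate as an additional primitive), that ``$V_{\delta'}\satisfies\tau$'' for $\delta'$ below the ordinal height of the relevant model is correctly and absolutely evaluated so that the second conjunct of $\sigma$ says what we want, and that the two cases $\mu<\lambda$ and $\mu>\lambda$ in the categoricity check are handled correctly. Everything else is routine.
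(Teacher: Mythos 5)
Your proposal is correct and follows essentially the same route as the paper's proof: take the least inaccessible $\kappa$ for which some smaller inaccessible $\delta$ has $V_\delta\equiv V_\kappa$, note that this immediately kills first-order theory categoricity, and then use the second-order definability of the first-order truth predicate to express with a single second-order sentence that such a smaller elementarily equivalent inaccessible level exists and that $\kappa$ is least with this property. Your extra bookkeeping about absoluteness of the evaluation for levels above and below $\kappa$ is a fine elaboration of what the paper leaves implicit.
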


\begin{proof}
If there are at least $\continuum^+$ many inaccessible cardinals, then there must be an inaccessible cardinal that is not first-order theory categorical (nor even second-order theory categorical), since there are at most continuum many possible theories. If there is an inaccessible cardinal that is not first-order theory categorical, then there are inaccessible cardinals $\delta<\kappa$, such that $V_\delta$ and $V_\kappa$ have the same first-order theory. Let $\kappa$ be least such that this situation arises. So certainly $\kappa$ is not first-order theory categorical. Nevertheless, the cardinal $\kappa$ is characterized by a certain property of the first-order truth predicate of $V_\kappa$, which is second-order definable. With a single second-order sentence, we can assert that there is some inaccessible cardinal $\delta<\kappa$ for which $V_\delta$ has the same first-order theory as $V_\kappa$, and that $\kappa$ is least for which this situation occurs. So $\kappa$ is second-order sententially categorical.
\end{proof}

Thus, the blue region at the right is inhabited. Next, to show that the red region at the left is inhabited, consider the following theorem.

\begin{theorem}
If there are uncountably many inaccessible cardinals, then there is a first-order theory categorical cardinal that is not second-order sententially categorical.
\end{theorem}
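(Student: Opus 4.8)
The plan is to combine the countability of the collection of second-order sentences with Theorem~\ref{Theorem.Limits-of-sententially-categorical-are-theory-categorical}. Since there are only countably many second-order sentences, and distinct second-order sententially categorical cardinals must be characterized over $\ZFC_2$ by distinct sentences, there are at most countably many second-order sententially categorical cardinals. Because there are uncountably many inaccessible cardinals, we may fix an enumeration $\<\kappa_\alpha\mid\alpha<\omega_1>$ of the first $\omega_1$ many of them. Only countably many of these $\omega_1$ cardinals can be second-order sententially categorical, so there is a least ordinal $\alpha^*<\omega_1$ for which $\kappa_{\alpha^*}$ is not second-order sententially categorical.

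The claim is that $\kappa_{\alpha^*}$ is nonetheless first-order theory categorical, which will complete the proof. By the minimality of $\alpha^*$, every inaccessible cardinal below $\kappa_{\alpha^*}$---that is, every $\kappa_\beta$ with $\beta<\alpha^*$---is second-order sententially categorical, hence in particular sententially categorical in the sense relevant to Theorem~\ref{Theorem.Limits-of-sententially-categorical-are-theory-categorical} (whose proof explicitly accommodates characterizing sentences of either order). Thus the sententially categorical cardinals are unbounded in the inaccessible cardinals below $\kappa_{\alpha^*}$---indeed they comprise all of them---so by that theorem $\kappa_{\alpha^*}$ is first-order theory categorical. Since $\kappa_{\alpha^*}$ is first-order theory categorical but not second-order sententially categorical, the red region of the Venn diagram is inhabited.

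There is no substantial obstacle in this argument; the one point requiring a moment's care is the choice of $\alpha^*$ as the \emph{least} index of a non-second-order-sententially-categorical cardinal among the first $\omega_1$ inaccessible cardinals, rather than, say, the least inaccessible cardinal lying above all second-order sententially categorical cardinals. The latter choice would fail, since non-categorical inaccessible cardinals could be interspersed among the categorical ones below it, so the sententially categorical cardinals need not be cofinal below it and Theorem~\ref{Theorem.Limits-of-sententially-categorical-are-theory-categorical} would not apply. With the least-index choice, by contrast, every inaccessible cardinal strictly below $\kappa_{\alpha^*}$ is (second-order, hence) sententially categorical, which is precisely the hypothesis of that theorem.
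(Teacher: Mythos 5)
Your main argument is correct and follows essentially the same route as the paper: both proofs rest on the observation that only countably many of the uncountably many inaccessible cardinals can be second-order sententially categorical, and then conclude via Theorem~\ref{Theorem.Limits-of-sententially-categorical-are-theory-categorical}. The only difference is the witness: you take the least inaccessible cardinal that is not second-order sententially categorical, while the paper takes the first inaccessible cardinal above all of the second-order sententially categorical cardinals amongst the first $\omega_1$ many inaccessible cardinals.

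One correction to your closing remark, however: the alternative witness you dismiss is in essence the one the paper actually uses, and it does not fail in the way you suggest. If $\kappa$ is the least inaccessible cardinal lying above all of the second-order sententially categorical cardinals amongst the first $\omega_1$ many inaccessible cardinals, then by minimality every inaccessible $\delta<\kappa$ fails to lie above all of them, and so there is a sententially categorical cardinal $s$ with $\delta\leq s<\kappa$. Thus the sententially categorical cardinals are cofinal in the inaccessible cardinals below $\kappa$, interspersed noncategorical cardinals notwithstanding, and Theorem~\ref{Theorem.Limits-of-sententially-categorical-are-theory-categorical} applies; moreover $\kappa$ is itself not sententially categorical, being among the first $\omega_1$ many inaccessible cardinals and strictly above every categorical cardinal there. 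The genuine caveat for your phrasing of the alternative (``above all second-order sententially categorical cardinals,'' with no restriction) is instead one of existence: the sententially categorical cardinals can be cofinal in all of the inaccessible cardinals---for example, the $(\omega_1+n)$th inaccessible cardinals are sententially categorical when they exist---so a least inaccessible cardinal above them all need not exist; this is exactly why the paper restricts attention to the first $\omega_1$ many inaccessible cardinals before passing to the next inaccessible above the categorical ones found there.
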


\begin{proof}
This is an instance of theorem \ref{Theorem.Limits-of-sententially-categorical-are-theory-categorical}, but let us give the argument. Suppose that there are uncountably many inaccessible cardinals. Since there are only countably many second-order sentences, there are also only countably many second-order sententially categorical cardinals. In particular, there are only countably many second-order sententially categorical cardinals amongst the first $\omega_1$ many inaccessible cardinals. And so there will be a first inaccessible cardinal $\kappa$ that is larger than all of those. It is part of the first-order theory of $V_\kappa$ that those other smaller sententially categorical cardinals exist, that there are only countably many inaccessible cardinals, and there are no inaccessible cardinals above all of the sententially categorical cardinals. So the theory of $V_\kappa$ characterizes $\kappa$, since no other cardinal can have exactly the same collection of second-order sententially categorical cardinals and view itself as the next inaccessible cardinal after them. So $\kappa$ is first-order theory categorical. Since also it is amongst the first $\omega_1$ many inaccessible cardinals and strictly larger than all sententially categorical cardinals in that interval, it is not itself sententially categorical.
\end{proof}

Let us now prove that if there are sufficiently many large cardinals, then the central dark purple region of the Venn diagram is inhabited. For a first argument, we may modify the Mahlo cardinal argument of theorem \ref{Theorem.Least-Mahlo} by defining that a cardinal $\kappa$ is (first-order) \emph{definably Mahlo}, if every closed unbounded set $C\of\kappa$ that is definable in $V_\kappa$ from parameters in $V_\kappa$ contains a regular cardinal. Another way to say this is that $\kappa$ exhibits the Mahloness property for club sets definable in~$V_\kappa$.

\begin{theorem}\label{Theorem.Least-definably-Mahlo-first-order}
The least inaccessible definably Mahlo cardinal $\kappa$ is second-order sententially categorical and first-order theory categorical but not first-order sententially categorical.
\end{theorem}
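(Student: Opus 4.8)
The plan is to verify the three assertions in turn, using throughout Zermelo's theorem (Theorem~\ref{Theorem.Zermelo-quasi-categoricity}): every model of $\ZFC_2$ is isomorphic to some $\<V_\lambda,\in>$ with $\lambda$ inaccessible, so categoricity over $\ZFC_2$ is exactly a matter of excluding other inaccessible rank-initial segments. Two basic sentences will do the work. First, let $\rho$ be the first-order sentence ``there is no inaccessible definably Mahlo cardinal''; this is genuinely first-order, since inside any $V_\mu$ with $\delta<\mu$ the full satisfaction class of $V_\delta$ is available as an element, so ``$\delta$ is definably Mahlo'' is a first-order property of $\delta$ in $V_\mu$, correctly computed because $\ZFC$ pins down clubs and regularity below $\mu$. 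As $\kappa$ is the \emph{least} inaccessible definably Mahlo cardinal, $V_\kappa\satisfies\rho$. Second, ``$\mu$ is definably Mahlo'' is expressible by a second-order sentence about $\<V_\mu,\in>$: quantify over the (unique) full satisfaction class $S$ of the structure---its existence and Tarskian recursion form a second-order condition---and assert that every $S$-definable class that is closed unbounded in $\mu$ contains a regular cardinal. Let $\sigma$ be the conjunction of ``I am definably Mahlo'' with $\rho$, a second-order sentence true in $V_\kappa$. If $\lambda$ is inaccessible with $V_\lambda\satisfies\sigma$, then $\lambda$ is definably Mahlo and, by $\rho$, is the least such, so $\lambda=\kappa$; hence $\ZFC_2+\sigma$ is a categorical characterization of $V_\kappa$, giving second-order sentential categoricity.

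For first-order theory categoricity, the plan is to show $\ZFC_2+\Phi+\rho$ is categorical, where $\Phi$ is the first-order \emph{definable Mahlo reflection scheme}: for each formula $\varphi(x,y)$, the sentence saying that if $\{x:\varphi(x,y)\}$ is closed unbounded in $\Ord$ then it contains an inaccessible cardinal. (One may equivalently use the full first-order theory $\Th(V_\kappa)$, which contains $\Phi$ and $\rho$ by what follows.) We already have $V_\kappa\satisfies\rho$. To see $V_\kappa\satisfies\Phi$, suppose $C=\{\alpha<\kappa:V_\kappa\satisfies\varphi(\alpha,p)\}$ is club in $\kappa$; then $C$ is definable over $V_\kappa$, and intersecting it with the definable club of strong limit cardinals---note that a strong limit cardinal of $V_\kappa$ below $\kappa$ is genuinely a strong limit---yields a definable club $C'\of C$, whence definable Mahloness of $\kappa$ furnishes a regular cardinal $\gamma\in C'$, necessarily inaccessible, with $\gamma\in C$. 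Now suppose $\lambda$ is inaccessible and $V_\lambda\equiv V_\kappa$, so $V_\lambda\satisfies\Phi+\rho$. By $\Phi$, every club $C\of\lambda$ definable over $V_\lambda$ contains an inaccessible, hence regular, cardinal, so $\lambda$ is definably Mahlo; then $\rho$ forces $\lambda$ to be the least inaccessible definably Mahlo cardinal, so $\lambda=\kappa$. Thus no other inaccessible level shares the first-order theory of $V_\kappa$, and $\kappa$ is first-order theory categorical.

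Finally, $\kappa$ is not first-order sententially categorical. Let $\sigma$ be any first-order sentence with $V_\kappa\satisfies\sigma$, and fix $n$ with $\sigma$ equivalent to a $\Sigma_n$ sentence. Using the $\Sigma_n$ partial satisfaction predicate, definable over $V_\kappa$, the \Levy\ reflection theorem carried out inside $V_\kappa$ shows that $C=\{\alpha<\kappa:V_\alpha\elesub_{\Sigma_n}V_\kappa\}$ is a closed unbounded, and definable, subset of $\kappa$. Intersecting with the strong limit cardinals and invoking definable Mahloness of $\kappa$ produces an inaccessible $\delta<\kappa$ lying in $C$, and then $V_\delta\elesub_{\Sigma_n}V_\kappa$ gives $V_\delta\satisfies\sigma$. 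So $\sigma$ is already true at a smaller inaccessible rank-initial segment and cannot categorically characterize $V_\kappa$; as $\sigma$ was arbitrary, $\kappa$ is not first-order sententially categorical. (This also recovers the pertinent instance of Theorem~\ref{Theorem.Failures-of-sentential-categoricity-smaller}.)

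The step I expect to be the main obstacle is the verification that $V_\kappa$ satisfies the reflection scheme $\Phi$: one must faithfully translate the external notion of definable Mahloness---phrased in terms of clubs definable over $V_\kappa$ from arbitrary parameters---into the internal first-order scheme, matching ``$C$ is club in $\kappa$'' with ``$V_\kappa$ regards its defining class as closed unbounded in $\Ord$,'' and upgrading the merely regular cardinal delivered by definable Mahloness to an inaccessible one via the strong-limit club. The companion point used for the non-sentential direction---that the $\Sigma_n$-elementary rank-initial segments of $V_\kappa$ form a club definable over $V_\kappa$---is standard but deserves a careful statement.
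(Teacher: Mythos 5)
Your proof is correct and takes essentially the same route as the paper's: a second-order sentence expressing definable Mahloness via the truth predicate plus a leastness clause, the definably-Mahlo reflection scheme together with leastness for first-order theory categoricity, and the definable clubs of $\Sigma_n$-elementary rank-initial segments combined with definable Mahloness to defeat any single first-order sentence. Your explicit intersection with the club of strong limit cardinals to upgrade the regular cardinal supplied by definable Mahloness to an inaccessible one is a detail the paper leaves implicit, but it is the same argument.
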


\begin{proof}
Let $\kappa$ be the least inaccessible first-order definably Mahlo cardinal. For each natural number $n$, there is by the reflection theorem a definable club of cardinals $\delta<\kappa$ with $V_\delta\elesub_{\Sigma_n}V_\kappa$. Since $\kappa$ is definably Mahlo, this implies that there is some inaccessible cardinal $\delta<\kappa$ with the same $\Sigma_n$ theory as $V_\kappa$. So $\kappa$ is not first-order sententially categorical.

Meanwhile, the fact that $\kappa$ is definably Mahlo is a property of its first-order theory, because every instance of the definably Mahlo scheme is a first-order assertion. And it is also part of the theory of $V_\kappa$ that no smaller $\delta<\kappa$ is definably Mahlo. So $\kappa$ is first-order theory categorical, since no other cardinal can have this combination.

Finally, $\kappa$ is second-order sententially categorical, since the assertion that the theory of $V_\kappa$ contains that combination of statements is a single second-order assertion about $V_\kappa$, namely, the assertion that, ``in the unique truth predicate for first-order truth, every instance of the definably Mahlo scheme comes out true, as well as the assertion that no smaller inaccessible cardinal is definably Mahlo.''

So the least inaccessible definably Mahlo cardinal is second-order sententially categorical, first-order theory categorical, but not first-order sententially categorical.
\end{proof}

Meanwhile, as with theorem \ref{Theorem.Second-order-sententially-categorial-not-first-order-theory}, we can also provide an example from a much weaker large cardinal hypothesis.

\begin{theorem}
If there is an inaccessible cardinal that is not first-order sententially categorical (for example, if there are uncountably many inaccessible cardinals), then the least such cardinal is first-order theory categorical and second-order sententially categorical, but not first-order sententially categorical.
\end{theorem}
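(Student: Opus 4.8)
The plan is to let $\kappa$ be the least inaccessible cardinal that is not first-order sententially categorical. Such a cardinal exists under the stated hypothesis, and in particular whenever there are uncountably many inaccessible cardinals, since distinct first-order sententially categorical cardinals are characterized by distinct first-order sentences and there are only countably many of those. By this choice $\kappa$ is not first-order sententially categorical, so it remains to verify the other two properties.

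\emph{First-order theory categoricity.} By minimality every inaccessible cardinal below $\kappa$ is first-order sententially categorical, so the sententially categorical cardinals are (trivially) unbounded in the inaccessible cardinals below $\kappa$, and theorem~\ref{Theorem.Limits-of-sententially-categorical-are-theory-categorical} applies directly. Alternatively one argues by hand: for inaccessible $\delta<\kappa$ the characterizing sentence $\sigma_\delta$ holds at no inaccessible level other than $V_\delta$, so $V_\kappa\not\equiv V_\delta$; and for inaccessible $\lambda>\kappa$ the first-order sentence ``every inaccessible cardinal is first-order sententially categorical'' is true in $V_\kappa$ but false in $V_\lambda$, since $\kappa$ is an inaccessible cardinal of $V_\lambda$ that is not first-order sententially categorical --- using theorem~\ref{Theorem.Sentential-categoricity-absolute-to-Vtheta} to know this notion is computed correctly inside $V_\lambda$. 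Hence the first-order theory of $V_\kappa$ is realized at no other inaccessible level, and $\ZFC_2$ together with that theory categorically characterizes $V_\kappa$.

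\emph{Second-order sentential categoricity.} The key move is to turn ``$\kappa$ is not first-order sententially categorical'' --- which prima facie refers to inaccessible levels above $\kappa$ --- into a downward-looking condition: by theorem~\ref{Theorem.Failures-of-sentential-categoricity-smaller} (and trivially conversely), $\kappa$ fails to be first-order sententially categorical exactly when every first-order sentence true in $V_\kappa$ is also true in $V_\delta$ for some inaccessible $\delta<\kappa$. Let $\Tr$ be the unique satisfaction class for first-order truth over $\langle V_\kappa,\in\rangle$, a subclass of $V_\kappa$ whose existence and uniqueness follow from full second-order comprehension as in the proof of theorem~\ref{Theorem.Least-definably-Mahlo-first-order}, and let $\psi$ be the second-order sentence asserting that there is such a $\Tr$ for which (i) every first-order sentence $\sigma$ with $\Tr(\sigma)$ holds, in the sense of set-satisfaction, at some inaccessible level $V_\delta$, and (ii) every inaccessible cardinal is first-order sententially categorical. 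Clause (i) is first-order over the parameter $\Tr$, and clause (ii) is already first-order (quantifying over natural-number codes of sentences and over the inaccessible levels, all sets with a definable satisfaction predicate), so $\psi$ is a genuine second-order sentence. One then checks that $\psi$ holds in $V_\kappa$ --- clause (i) by theorem~\ref{Theorem.Failures-of-sentential-categoricity-smaller}, clause (ii) by minimality of $\kappa$ and theorem~\ref{Theorem.Sentential-categoricity-absolute-to-Vtheta}; that clause (i) fails in $V_\mu$ for inaccessible $\mu<\kappa$, since such a $\mu$ is itself first-order sententially categorical and so its characterizing sentence is true in $V_\mu$ but at no smaller inaccessible level; and that clause (ii) fails in $V_\lambda$ for inaccessible $\lambda>\kappa$, witnessed by $\kappa$ itself. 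Thus $\ZFC_2+\psi$ categorically characterizes $V_\kappa$, so $\kappa$ is second-order sententially categorical.

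The main obstacle is this last part, and specifically the recognition that the upward-looking property ``not first-order sententially categorical'' can be replaced --- via the downward-reflection phenomenon of theorem~\ref{Theorem.Failures-of-sentential-categoricity-smaller} --- by a condition purely about $V_\kappa$ and its smaller inaccessible levels, hence expressible first-order over the truth predicate. The rest is careful bookkeeping with theorem~\ref{Theorem.Sentential-categoricity-absolute-to-Vtheta}, ensuring that ``first-order sententially categorical'' is computed correctly inside each candidate $V_\mu$ so that clauses (i) and (ii) take their intended truth values at every inaccessible level.
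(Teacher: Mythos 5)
Your proof is correct and takes essentially the same route as the paper's: theorem~\ref{Theorem.Failures-of-sentential-categoricity-smaller} converts non-categoricity of the least such $\kappa$ into the downward reflection property, which instance-by-instance is part of the first-order theory of $V_\kappa$ (giving theory categoricity) and, via the second-order definable truth predicate, is a single second-order sentence (giving sentential categoricity), with minimality and the absoluteness of theorem~\ref{Theorem.Sentential-categoricity-absolute-to-Vtheta} ruling out all other inaccessible levels. Your choice of the auxiliary clause ``every inaccessible cardinal is first-order sententially categorical'' in place of the paper's ``no smaller inaccessible has the reflection property,'' and your appeal to theorem~\ref{Theorem.Limits-of-sententially-categorical-are-theory-categorical} for the theory-categoricity half, are harmless variants of the same argument.
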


\begin{proof}
Suppose that $\kappa$ is the smallest inaccessible cardinal that is not first-order sententially categorical. By theorem \ref{Theorem.Failures-of-sentential-categoricity-smaller}, this means $\kappa$ is smallest with the property that every first-order sentence $\sigma$ true in $V_\kappa$ is also true in some smaller inaccessible $V_\delta$. We claim that $\kappa$ is first-order theory categorical, since it thinks that no smaller inaccessible cardinal has the property we just described, and yet every instance of the defining property of $\kappa$ is part of the first-order theory of $V_\kappa$. So among inaccessible cardinals, only $V_\kappa$ will have that combination in its theory. Finally, we claim also that $\kappa$ is second-order sententially categorical, because first-order truth in $V_\kappa$ is second-order definable, and so the property that every first-order sentence $\sigma$ true in $V_\kappa$ reflects to some inaccessible $V_\delta$ below is a single second-order assertion about $V_\kappa$. So we can characterize $V_\kappa$ by that property plus the assertion that no smaller inaccessible cardinal has that property.
\end{proof}

Thus, if there are sufficiently many inaccessible cardinals, then the central dark purple region of the Venn diagram is inhabited.

Finally, let us show that the light purple region at the top of the Venn diagram also is inhabited.

\begin{theorem}
If there is an inaccessible cardinal that is not second-order theory categorical (for example, if there are at least $\continuum^+$ many inaccessible cardinals), then there is an inaccessible cardinal that is second-order theory categorical, but neither second-order sententially categorical nor first-order theory categorical.
\end{theorem}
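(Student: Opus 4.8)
The plan is to let $\kappa$ be the least inaccessible cardinal that fails to be second-order sententially categorical but nonetheless shares its first-order theory with some smaller inaccessible cardinal, i.e.\ for which there is an inaccessible $\delta<\kappa$ with $V_\delta\equiv V_\kappa$. First I would verify that such a $\kappa$ exists under the hypothesis: if $\mu$ is inaccessible and not second-order theory categorical, witnessed by an inaccessible $\mu'\neq\mu$ with $V_\mu\equiv_2 V_{\mu'}$, then both $\mu$ and $\mu'$ fail to be second-order theory categorical and hence also fail to be second-order sententially categorical, while $V_\mu\equiv V_{\mu'}$; taking the larger of $\mu,\mu'$ exhibits a cardinal in the relevant class, which therefore has a least element $\kappa$. (If instead there are at least $\continuum^+$ many inaccessible cardinals, the same conclusion follows by counting: only countably many inaccessibles can be second-order sententially categorical and there are only continuum many possible first-order theories, so some first-order theory is shared by two non-second-order-sententially-categorical inaccessibles.)

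By construction $\kappa$ is not second-order sententially categorical, and the witnessing $\delta<\kappa$ shows immediately that $\kappa$ is not first-order theory categorical. The entire substance of the argument is then to show that $\kappa$ \emph{is} second-order theory categorical, that is, that no other inaccessible cardinal has the same second-order theory as $V_\kappa$.

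The key observation is that the second-order theory of $V_\kappa$ records both defining features of $\kappa$. On the one hand, ``no smaller inaccessible cardinal is simultaneously non-second-order-sententially-categorical and equipped with a first-order twin among the inaccessibles below it'' is true in $V_\kappa$ by minimality, and it is even first-order expressible there, using that for $\delta<\kappa$ the relevant truth predicates are sets inside $V_\kappa$ and that second-order sentential categoricity of cardinals below $\kappa$ is computed correctly by $V_\kappa$ by absoluteness, theorem~\ref{Theorem.Sentential-categoricity-absolute-to-Vtheta}. On the other hand, ``there is an inaccessible $\delta$ below with the same first-order theory as me'' is a second-order truth of $V_\kappa$, since $V_\kappa$ can define its own first-order truth predicate with a second-order quantifier. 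Now suppose $\lambda\neq\kappa$ is inaccessible with $V_\lambda\equiv_2 V_\kappa$. If $\lambda<\kappa$, then by minimality $\lambda$ is either second-order sententially categorical or lacks a first-order twin below it: in the first case the characterizing sentence $\tau$ of $\lambda$ already separates them, since $V_\lambda\models_2\tau$ while $V_\kappa$, being a different inaccessible, does not; in the second case the second-order sentence ``no smaller inaccessible has my first-order theory'' holds in $V_\lambda$ but fails in $V_\kappa$, which has the twin $\delta$. If $\lambda>\kappa$, then $V_\lambda$ can see that $\kappa$ is not second-order sententially categorical, because every second-order sentence true in $V_\kappa$ reflects to a \emph{smaller} inaccessible by theorem~\ref{Theorem.Failures-of-sentential-categoricity-smaller}, and any such reflecting inaccessible lies below $\kappa<\lambda$; moreover $V_\lambda$ sees the first-order twin $\delta<\kappa$. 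Hence $V_\lambda$ satisfies ``some smaller inaccessible is both non-second-order-sententially-categorical and has a first-order twin below it,'' contradicting the first-order statement just shown to belong to the theory of $V_\kappa$. Either way $V_\lambda\not\equiv_2 V_\kappa$, so $\kappa$ is second-order theory categorical, as desired.

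I expect the delicate step to be precisely this last one: checking that the failure of second-order sentential categoricity and the presence of a first-order twin are recorded in the (partly first-order, partly second-order) theory of $V_\kappa$ in a manner that every larger $V_\lambda$ necessarily perceives. This is where theorems~\ref{Theorem.Sentential-categoricity-absolute-to-Vtheta} and~\ref{Theorem.Failures-of-sentential-categoricity-smaller} carry the weight, since they guarantee that failures of categoricity always manifest themselves downward and so remain locally visible. By contrast the two negative properties of $\kappa$ are built into its definition, so no further work is needed for them.
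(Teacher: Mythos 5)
Your proposal is correct, but it takes a genuinely different route from the paper. The paper starts from a witness $\lambda$ to the failure of second-order theory categoricity, observes (as in theorem~\ref{Theorem.Non-theory-categorical-cardinals-are-reflecting}) that $\lambda$ agrees levelwise with smaller inaccessibles, and lets $\kappa$ be least such that for every $n$ some inaccessible $\delta<\kappa$ has the same $\Sigma^1_n$ theory as $V_\kappa$; that single reflection property then does triple duty---non-sentential categoricity because every second-order sentence is $\Sigma^1_n$ for some $n$, non-first-order theory categoricity because first-order truth is $\Delta^1_1$ so the $\Sigma^1_1$-twin is already a first-order twin, and second-order theory categoricity because the property (as a scheme) together with its minimality is recorded in the second-order theory, making uniqueness nearly automatic. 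You instead take $\kappa$ least with the conjunction ``not second-order sententially categorical and possessing a first-order twin below,'' so the two negative conclusions are free by construction, and you concentrate the work in the categoricity verification, splitting into $\lambda<\kappa$ (with its two subcases from minimality) and $\lambda>\kappa$, where you rightly lean on the absoluteness results of theorems~\ref{Theorem.Sentential-categoricity-absolute-to-Vtheta} and~\ref{Theorem.Failures-of-sentential-categoricity-smaller} to ensure that $V_\kappa$ and every larger $V_\lambda$ compute these failures correctly; the paper's argument never needs those theorems. Both arguments share the key ingredient that the first-order truth predicate of $V_\kappa$ is second-order definable, and both verify existence of a suitable $\kappa$ from the hypothesis in essentially the same way. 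The trade-off: the paper's choice of defining property is more economical and uniform, while yours is a perfectly sound alternative whose defining property is weaker (it is implied by the paper's), so it isolates a possibly smaller cardinal at the cost of a longer uniqueness check.
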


\begin{proof}
If there is an inaccessible cardinal that is not second-order theory categorical, then there are inaccessible cardinals $\delta<\lambda$ for which $V_\delta$ and $V_\lambda$ have the same second-order theory. In particular, $\lambda$ has the property (as in theorem \ref{Theorem.Non-theory-categorical-cardinals-are-reflecting}) that for every natural number $n$, there is a smaller inaccessible cardinal $\delta<\lambda$ for which $V_\delta$ has the same $\Sigma^1_n$ theory as $V_\lambda$.

Let $\kappa$ be the smallest inaccessible cardinal with that property, so that for any natural number $n$, there is a smaller inaccessible cardinal $\delta$ for which $V_\delta$ has the same $\Sigma^1_n$ theory as $V_\kappa$. This property is expressible in the second-order theory of the model, and so $\kappa$ is second-order theory categorical. But also, it follows that any particular second-order sentence true in $V_\kappa$ is also true in such a $V_\delta$, and so $\kappa$ is not second-order sententially categorical. But also, we claim, $\kappa$ is not first-order theory categorical, because the first-order theory of $V_\kappa$ is part of the $\Sigma^1_1$ theory of $V_\kappa$, as the truth predicate is definable at this level (indeed, first-order truth has complexity $\Delta^1_1$). Namely, a first-order sentence $\psi$ is true in $V_\kappa$ if and only if there is a class $T$ obeying the Tarskian truth recursion---so it is a truth predicate---according to which $\psi$ is declared true. If $V_\delta$ has the same $\Sigma^1_1$ theory as $V_\kappa$, then they agree on the entire first-order theory, and so $\kappa$ is not first-order theory categorical. So this cardinal $\kappa$ is as desired.
\end{proof}

Thus, we have established theorem \ref{Theorem.Venn-diagram} and therefore also theorem \ref{Theorem.Complete-implication-diagram}.

%
%
%

\section{Generalization to other cardinal notions}

Let us prove a version of Zermelo's quasi-categoricity theorem for the class of worldly cardinals, where a cardinal $\kappa$ is \emph{worldly} if $V_\kappa\satisfies\ZFC$, meaning here just the first-order theory \ZFC.

\begin{theorem}
The models of $\ZFC+\textup{Zermelo}_2$, that is, first-order \ZFC\ with second-order Zermelo set theory, are (up to isomorphism) precisely the models $V_\kappa$ for a worldly cardinal $\kappa$.
\end{theorem}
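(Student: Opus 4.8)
The plan is to prove the two inclusions separately. For the easy direction, suppose $\kappa$ is worldly, so that $V_\kappa\satisfies\ZFC$ by definition. It is standard that $\kappa$ is then a strong limit cardinal (indeed a $\beth$-fixed point, as noted earlier in the article), so every subset of a set $a\in V_\kappa$ has rank at most $\rank(a)<\kappa$ and hence lies in $V_\kappa$. This yields full second-order separation in $V_\kappa$; the second-order axiom of foundation holds because $V$ is well-founded; and the remaining Zermelo axioms are subsumed by $\ZFC$. So $V_\kappa\satisfies\ZFC+\textup{Zermelo}_2$.

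For the substantive direction, let $M\satisfies\ZFC+\textup{Zermelo}_2$, with the second-order quantifiers interpreted under the full semantics. First I would show $M$ is well-founded: an external infinite $\in^M$-descending sequence is a subset of the domain of $M$, hence a legitimate value for a second-order quantifier under the full semantics, and it directly contradicts the second-order axiom of foundation; so no such sequence exists. With extensionality, the Mostowski collapse then lets me assume $M$ is transitive. Next I would use full second-order separation to show $M$ is correct about power sets: if $a\in M$ then $a\of M$ by transitivity, so any $b\of a$ is a subset of the domain of $M$, and $\{x\in a\mid x\in b\}=b$ by separation, whence $b\in M$; thus $\mathcal P(a)\of M$ and, by the power-set axiom, $\mathcal P^M(a)=\mathcal P(a)\in M$. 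Finally I would show $M=V_\kappa$ for $\kappa=\Ord^M$: since $M\satisfies\ZFC$ it satisfies $V=\bigcup_\alpha V_\alpha$, so $M=\bigcup_{\alpha<\kappa}(V_\alpha)^M$, and a transfinite induction using power-set correctness at successor stages together with the absoluteness of rank and of unions for transitive models gives $(V_\alpha)^M=V_\alpha$ for all $\alpha<\kappa$; hence $M=\bigcup_{\alpha<\kappa}V_\alpha=V_\kappa$. A last short argument as in the easy direction, now with power-set correctness available, shows $\kappa$ is a strong limit cardinal and hence worldly.

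The point I expect to require the most care is pinpointing where each half of the theory is indispensable, since neither half alone suffices: second-order Zermelo delivers power-set correctness but does not prove that the cumulative hierarchy exhausts the model---witness $V_{\omega+\omega}\satisfies\textup{Zermelo}_2$ while $V_{\omega+\omega}\not\satisfies\ZFC$, since Replacement fails there---whereas first-order $\ZFC$ builds and exhausts the hierarchy but a transitive model of it need not be correct about power sets (and such a model fails second-order separation). It is the conjunction that forces $M$ to be a genuine rank-initial segment, and I would be careful to record that second-order foundation is what gives well-foundedness, full second-order separation is what gives power-set correctness, and first-order Replacement is what guarantees the von Neumann hierarchy exhausts $M$; and to keep in mind throughout that, under the full semantics, every external subset of the domain of $M$ is available to the second-order quantifiers.
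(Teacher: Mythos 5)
Your proposal is correct and follows the same overall route as the paper's proof (both directions, Mostowski collapse, power-set correctness via full second-order separation, then correctness of the internal $V_\alpha$ hierarchy giving $M=V_\kappa$ with $\kappa=\Ord^M$ worldly), but it diverges at one step: you obtain external well-foundedness of $M$ from a \emph{second-order foundation} axiom, whereas the paper derives it from the second-order separation axiom alone. That difference matters because Zermelo set theory in its classical form need not include foundation, and the paper explicitly remarks afterward that $\ZFC+\textup{Zermelo}_2$ is equivalent to $\ZFC+\textup{Separation}_2$---so its argument is arranged to use only separation beyond first-order $\ZFC$. If you want your proof to be robust to that reading, replace your foundation step as follows: given an external descending sequence $x_0\ni^M x_1\ni^M\cdots$, all the $x_n$ lie $\in^M$-inside what $M$ takes to be the transitive closure of $\{x_0\}$, so by full second-order separation there is $b\in M$ whose $\in^M$-members are exactly the $x_n$, and then $M$'s own (first-order) foundation axiom applied to $b$ gives a contradiction; this yields well-foundedness, and second-order foundation then comes for free. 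Two smaller points: in the easy direction the appeal to $\kappa$ being a strong limit is unnecessary, since for any $\kappa$ a subset of $a\in V_\kappa$ has rank at most $\rank(a)<\kappa$ and so lies in $V_\kappa$; and at the end no argument about strong limits is needed (nor would ``strong limit, hence worldly'' be a valid inference), since worldliness of $\kappa$ is immediate from $M=V_\kappa\satisfies\ZFC$, which is the definition. Your closing analysis of which half of the theory does which job ($V_{\omega+\omega}$ versus transitive non-$V_\kappa$ models of $\ZFC$) is accurate and matches the paper's remark that power-set correctness is the essential point.
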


\begin{proof}
If $\kappa$ is worldly, then $V_\kappa$ is a model of \ZFC\ which is correct about power sets, and so it satisfies the second-order separation axiom and hence the second-order Zermelo theory. Conversely, if $M$ is a model of \ZFC\ plus second-order Zermelo set theory, then because of the second-order separation axiom, it will be correct about power sets, and so the model will be well-founded---we may assume without loss that it is transitive---and so the internal computation of the cumulative $V_\alpha$ hierarchy will be correct. So $M=V_\kappa$ for some ordinal $\kappa$ for which $V_\kappa\satisfies\ZFC$, meaning that $\kappa$ is worldly.
\end{proof}

The theory $\ZFC+\textup{Zermelo}_2$ can be equivalently described as $\ZFC+\textup{Separation}_2$, that is, with the second-order separation axiom, since the only part of the second-order Zermelo theory that adds something over first-order \ZFC\ is the second-order separation axiom. All that we really needed, of course, was that the model computes power sets correctly.

Most of the arguments and analysis of this article will simply carry over to the worldly cardinals. More generally, even without an explicit quasi-categoricity result, we may consider the notions of categoricity relative to any fixed class of cardinals $A$. For example, we can define that a cardinal $\kappa$ is \emph{sententially categorical relative to $A$}, if there is a sentence such that $V_\kappa\satisfies\sigma$ and $\kappa$ is only element of $A$ with that feature; and similarly with theory categoricity and so on. In this way, versions of our analysis will apply to the class of weakly compact cardinals, say, or the measurable cardinals or the supercompact cardinals or what have you.

\section{Several philosophical issues}\label{Section.Philosophical-issues}

For the rest of the article, we should like to engage with several matters of a more philosophical nature.

\subsection{Object theory or meta-theory?}

Do we properly consider the categoricity of our mathematical structures as a matter for the set-theoretic object theory, or is categoricity instead an inherently meta-theoretic matter? The question is whether we should undertake an analysis of categoricity as part of the ordinary development of mathematics, for example, as it is undertaken in \ZFC\ set theory, or whether issues of categoricity lie somehow outside of or perhaps prior to the ordinary development of mathematics. Perhaps we best consider the categoricity of our structures such as the natural numbers and the real numbers in a metatheoretic discourse preceeding our ordinary mathematical analysis, in a way that enables our reference to and use of those structures.

Meanwhile, it is certainly possible to mount an analysis of categoricity in the object theory, and we have taken ourselves largely to have done so in the earlier sections of this article. In our main definition, after all, we had stated what it means for a cardinal $\kappa$ to be sententially or theory categorical, providing the definition in terms of the set-theoretic properties of the structure $V_\kappa$ and its competitors $V_\lambda$, referring to the features of these structures that are revealed in their first and second-order theories. These notions are all first-order expressible in the background set theory, and so the entire discussion can be seen as taking place within \ZFC\ set theory. In that theory we can express what it means for a cardinal $\kappa$ to be inaccessible, what it means to have the corresponding rank-initial segment $V_\kappa$, what the first and second-order theories of this structure are, and so on, interpreting second-order quantifiers as quantifying over the subsets of $V_\kappa$, which amounts to a first-order quantification in the background set theory. In this way, the property of a cardinal $\kappa$ being first-order sententially categorical or second-order theory categorical and so on are seen as part of the ordinary set-theoretic development, just like any of the other large cardinal properties commonly considered in set theory.

This choice has consequences. Precisely because we undertook our analysis in the object theory, the notions of categoricity that we provided can be interpreted inside any model of \ZFC, since every such model provides its notions of what it means to be an inaccessible cardinal and what the truth predicates are like for the structures $V_\kappa$ and so on. Thus, categoricity becomes a relative concept, relative to a given set-theoretic background. The categoricity of a cardinal $\kappa$ inside a model $M\satisfies\ZFC$ depends in large part on what other kinds of inaccessible cardinals are available inside $M$. The categoricity of a structure in a model of set theory ultimately depends on what other structures are available in that structure. In light of theorem \ref{Theorem.Every-theory-can-be-categorical} and the subsequent remarks, a structure may admit a categorical characterization inside a particular model of set theory $M$ but not outside.

The same object-theory/meta-theory issue arises also with how we are to interpret Zermelo's quasi-categoricity theorem. Namely, are we to take this result as an ordinary mathematical development within the object theory of \ZFC\ or alternatively as a fundamentally meta-theoretic observation? Most contemporary set theorists appear to understand theorem \ref{Theorem.Zermelo-quasi-categoricity} entirely in the object theory, as a theorem of \ZFC, an early part of the development of large cardinals within \ZFC\ set theory. On this reading of the theorem, the second-order quantifiers of the theory $\ZFC_2$ in a model $V_\kappa$ are taken simply to range over the subsets of $V_\kappa$ in the ambient set-theoretic context.

Mathematicians often also adopt the corresponding stance toward Dedekind's categorical characterization of the natural number structure $\<\N,S,0>$ and Huntington's categorical characterization of the real number field as the unique complete ordered field. In each case, the second-order theory is interpreted within the first-order set-theoretic object theory. In this way, the various categoricity and quasi-categoricity results become theorems of \ZFC\ set theory, able to be applied inside any particular model of \ZFC.

An opposing philosophical perspective on these results taken by many, such as \cite{Kreisel1967:Informal-rigour-and-completeness-proofs, Isaacson2011:TheRealityOfMathematicsAndTheCaseOfSetTheory}, would be to insist that the second-order theories are to be interpreted in a true second-order logic, where the second-order quantifiers range through all of the actual subsets of the domain, and not merely those available within the limited context of a particular model of \ZFC. On this view, some models of \ZFC\ are simply wrong about which subsets there are and so they do not reliably interpret the second-order theories. On this perspective, the categoricity results are seen as more fundamental observations about those categorical structures, taking place outside the set-theoretic object theory. Zermelo's quasi-categoricity result, for example, is taken to be telling us about the nature of the actual set-theoretic universe, rather than just something that happens inside various \ZFC\ models. We are interested in using the second-order extensions of $\ZFC_2$ to tell us about the nature of the full set-theoretic universe $V$, not merely some set-sized models $V_\kappa$.

This difference in perspective is therefore tightly connected with the question of whether second-order logic stands on its own or is to be regarded as interpreted inside set theory. See Button and Walsh \cite{Button+Walsh2018:Philosophy-and-model-theory} for further discussion of the spectrum of philosophical attitudes towards categoricity.

\subsection{Unifying the approaches}

Let us argue, however, that the opposing perspectives are not ultimately so different after all. Suppose that we have taken a metatheoretic approach to categoricity, interpreting our second-order theories in what we think is the full, robust, true second-order logic.

As we reason in this second-order logic, we shall naturally find ourselves eventually wanting to appeal to various principles of second-order logic that we expect to be true of it---we shall likely commit to diverse natural second-order set-existence principles. Of course, in light of the necessary failure of compactness for second-order logic, however, we shall never achieve a truly sound and complete axiomatization and proof system for this second-order logic; but we shall articulate more and more the nature of the set-existence principles of our intended second-order logic.

A neutral observer of this process could remark that what we would seem to be doing is essentially putting forth a theory of sets in the metatheory to govern the interpretation of second-order logic in the object theory. Specifically, we would have what amounts to a first-order theory of sets adopted in the metatheory to provide the sets that will be interpreted as classes in the second-order object theory.

In this way, the two approaches we mentioned earlier increasingly resemble each other (a similar conclusion is made in \cite{Väänänen2012:Second-order-logic-or-set-theory}). The set-theorist who develops notions of categoricity and quasi-categoricity in the first-order object theory of \ZFC\ can be seen simply as having adopted that theory to govern the set-existence principles underlying the interpretation of second-order logic that this theory provides. And the logician who claims to be using a pure second-order logic finds him or herself increasingly articulating set-existence principles to govern that logic, and can therefore be seen essentially to be presenting a first-order set-theoretic account of the intended set-theoretic background. In the end, they are both in the same place---what is the set-theoretic background object theory for one set theorist is simply the second-order metatheory of another.

\subsection{A plurality of metatheoretic contexts}

The first author has described how this kind of move, transforming object theory to metatheory and vice versa, is a natural outcome of set-theoretic pluralism:
\begin{quote}\small
The multiverse [i.e. pluralist] perspective ultimately provides what I view as an enlargement of the theory/metatheory distinction. There are not merely two sides of this distinction, the object theory and the metatheory; rather, there is a vast hierarchy of metatheories. Every set-theoretic context, after all, provides in effect a metatheoretic background for the models and theories that exist in that context---a model theory for the models and theories one finds there. Every model of set theory provides an interpretation of second-order logic, for example, using the sets and predicates existing there. Yet a given model of set theory $M$ may itself be a model inside a larger model of set theory $N$, and so what previously had been the absolute set-theoretic background, for the people living inside $M$, becomes just one of the possible models of set theory, from the perspective of the larger model $N$. Each metatheoretic context becomes just another model at the higher level. In this way, we have theory, metatheory, metametatheory, and so on, a vast hierarchy of possible set-theoretic backgrounds. 
\cite[p.~298]{Hamkins2021:Lectures-on-the-philosophy-of-mathematics}
\end{quote}
The plurality of set-theoretic contexts for a given structure thus reveals ultimately a measure of nonabsoluteness of categoricity---whether a structure is categorical or not depends on the set-theoretic background in which the second-order logic is interpreted. Theorem \ref{Theorem.Every-theory-can-be-categorical} shows, after all, that any sentence or theory that can hold in a model of $\ZFC_2$ can also serve as a categorical characterization in the context of a suitably chosen set-theoretic background. In this way, discussions of categoricity become wrapped up with the debate on set-theoretic pluralism.

Much of our earlier analysis, such as theorems \ref{Theorem.Kappa^Plus-sententially-categorical}, \ref{Theorem.If-kappa-fresh-then-kappa-Plus-theory-categorical}, and \ref{Theorem.Limits-of-sententially-categorical-are-theory-categorical}, reveal how categoricity tends to push us out of any given level of the set-theoretic universe. If one model $V_\kappa$ has a categorical characterization, then so does the next one $V_{\kappa^\Plus}$, and it interprets the previous one, but not conversely. If foundations is about maximizing interpretative power, then we are thereby pushed out of any given categorical model. No one categorical structure will ever be enough, because if we have a structure and its categorical characterization, then we also have a truth predicate for that structure, and this is also categorical, but not interpretable in the original structure. Väänänen \cite[proposition~3,4]{Väänänen2012:Second-order-logic-or-set-theory} observes that if $\kappa<\lambda$ are sententially categorical, then the second-order theory of $\<V_\kappa,\in>$ is Turing computable from that of $\<V_\lambda,\in>$, a simple consequence of the fact that $V_\kappa$ is definable in $V_\lambda$, and so assertions about $V_\kappa$ amount to relativized truth assertions in $V_\lambda$. Indeed, the argument does not require $\lambda$ to be characterizable, and furthermore it shows that the second-order theory of $V_\kappa$ is Turing computable from the first-order theory of $V_\lambda$. Because of this reduction in complexity, it follows that the second-order theory of $V_\lambda$ is not Turing computable from that of $V_\kappa$. Thus, as we move to higher and higher categorical cardinals, the theory necessarily becomes more complex. We shall be eternally pushed toward larger and larger categorical contexts, truth, then truth-about-truth, truth-about-truth-about-truth, the next inaccessible beyond, and so on. We shall never be done.

\subsection{Categoricity as semantic completeness}

A categorical theory completely determines the structure in which it holds, and in this sense, the theory also completely determines the truths of that structure. If $T$ is categorical, after all, then for any assertion $\varphi$ in that language, either $T\satisfies\varphi$ or $T\satisfies\neg\varphi$, precisely because either $\varphi$ is true in the unique model of $T$ or it isn't. Dedekind arithmetic, for example, is complete in this sense for arithmetic assertions, and the axioms of a complete ordered field are complete for assertions about the real numbers.

Kreisel \cite{Kreisel1967:Informal-rigour-and-completeness-proofs} argued similarly with second-order set theory, using Zermelo's quasi-categoricity result to argue that $\ZFC_2$ settles the continuum hypothesis. Since the truth or falsity of the continuum hypothesis is revealed very low in the set-theoretic hierarchy, at the level of $V_{\omega+2}$, and since all the models of $\ZFC_2$ have the form $V_\kappa$ for an inaccessible cardinal $\kappa$ and these agree on $V_{\omega+2}$, it follows that all the models of $\ZFC_2$ give the same answer for the continuum hypothesis. In this sense, $\ZFC_2$ settles the continuum hypothesis. Daniel Isaacson \cite{Isaacson2011:TheRealityOfMathematicsAndTheCaseOfSetTheory} also defends this view.

Similar reasoning shows that $\ZFC_2$ is complete with respect to nearly the entirety of classical mathematics, which takes place at comparatively low levels of the set-theoretic hierarchy---mathematicians have argued that $V_{\omega+5}$ is sufficient, but indeed even $V_{\omega+\omega}$ or $V_{\omega_1}$ would be good enough---the argument shows that $\ZFC_2$ is semantically complete with respect to any mathematical question that can be resolved inside any $V_\alpha$ up to the first inaccessible cardinal.

So it would seem to be great news---our fundamental theory $\ZFC_2$ determines the answer to essentially every mathematical question! Fantastic! Let's get straight to work with this theory.

But wait, you say that it isn't working? We are told that the theory $\ZFC_2$ determines the answer to CH and all other classical mathematical questions, but the disappointment comes when we seem unable to use this theory in any way to figure out what the answers actually are. The reason is that we lack a sound, complete, and verifiable proof system for second-order logic, and so we cannot actually use the completeness of the theory $\ZFC_2$ in any mechanistic manner of reasoning to determine the answer. When working with a second-order theory what often happens in practice is that one adopts as much of the second-order theory as one can, by gathering together the set-existence principles one views as sound. But to do so is as we explained earlier to adopt in the metatheory what amounts to a first-order set theory such as \ZFC. And since this theory does not settle the continuum hypothesis or even every arithmetic question---it must be incomplete---we are forced to give up the semantic completeness claim.

The completeness of a first-order theory $T$ (specified by a computable list of axioms) leads necessarily to a computable decision procedure for the entire content of the theory. Namely, given any question $\varphi$, we can search systematically for a proof $T\proves\varphi$ or a refutation $T\proves\neg\varphi$; if the theory is complete, then we will eventually find one of these and thereby come to the answer of whether $\varphi$ holds in the theory or not. In second-order logic, however, we have no such complete proof system and we must remain basically at a loss. For this reason, the semantic completeness of our second-order set theories is not as useful as it might seem.

The objection we have made so far to the completeness claim for the second-order theory is about our inability to use it, rather than an ontological point about what there is and what is true. So let us now mount a sharper objection. Namely, we claim that one cannot deduce the definiteness of our mathematical structures on the basis of categorical characterizations in second-order logic. What we claim is that any legitimate metatheoretic aparatus, whether it is second-order logic, plural quantifiers, Fregean concepts or what have you, faces a certain dichotomy---either it will be incomplete in the metatheoretic account it provides, like using a first-order commitment such as \ZFC, or else it will be complete, but in a way that begs the question concerning the definite nature of the metatheoretic ontology. This is question-begging because we cannot establish the definiteness of the object-theory set concept by appealing to a presumed definiteness of the meta-theory set concept. To do so is merely to put off the definiteness objection from the object theory to the metatheory, but without in any way answering that objection. If someone says that our concept of set is definite and complete because they have a categorical account of it in second-order logic, then of course we would simply want to know why their metatheoretic concept of set (or of pluralities or Fregean concepts or what have you) is definite and complete.

The first author argued similarly as follows that the semantic completeness of the second-order theory $\ZFC_2$ is illusory, for all that has happened is that we have pushed off the incompleteness into the metatheory.
\begin{quote}\small
Critics view this [Kreisel's argument on the determinateness of CH] as sleight of hand, since second-order logic amounts to set theory itself, in the metatheory. That is, if the interpretation of second-order logic is seen as inherently set-theoretic, with no higher claim to absolute meaning or interpretation than set theory, then to fix an interpretation of second-order logic is precisely to fix a particular set-theoretic background in which to interpret second-order claims. And to say that the continuum hypothesis is determined by [the] second-order set theory is to say that, no matter which set-theoretic background we have, it either asserts the continuum hypothesis or it asserts the negation. I find this to be like saying that the exact time and location of one's death is fully determinate because whatever the future brings, the actual death will occur at a particular time and location. But is this a satisfactory proof that the future is ``determinate''? No, for we might regard the future as indeterminate, even while granting that ultimately something particular must happen. Similarly, the proper set-theoretic commitments of our metatheory are open for discussion, even if any complete specification will ultimately include either the continuum hypothesis or its negation. Since different set-theoretic choices for our interpretation of second-order logic will cause different outcomes for the continuum hypothesis, the principle remains in this sense indeterminate. \cite[p.~288]{Hamkins2021:Lectures-on-the-philosophy-of-mathematics}
\end{quote}\goodbreak

We should like to emphasize that this objection applies just as much to the more ordinary categorical characterizations of mathematical structure. The fact that the Dedekind axioms for the natural numbers determine a definite structure $\<\N,S,0>$ and therefore determine all the arithmetic truths is not actually helpful for us to discover those truths. Ultimately, number theorists will find themselves adopting what amounts to a first-order theory such as PA or ZFC in the metatheory, and these remain incomplete for arithmetic truth. For analogous reasons, the categorical accounts of the structures $V_\kappa$ on which we have focussed in this article may ultimately be less fulfilling than one hoped.

\subsection{Categoricity, reflection and realism}

Finally, we should like to call attention to a certain perplexing tension we observe between two fundamental values in mathematics---the contradictory natures of categoricity and set-theoretic reflection. The matter deserves philosophical attention.

On the one hand, mathematicians almost universally seek categorical accounts of their fundamental mathematical structures, from Dedekind's axiomatization of arithmetic to the characterization of the real numbers as a complete ordered field. Categoricity is taken as a positive value and a key general goal in mathematical practice. At least part of the explanation for this (our criticism notwithstanding) is that the categorical characterizations of our structures seem to give us reason to regard these structures as definite. We know what we mean by the natural numbers, on this view, precisely because we can categorically describe the natural number structure. Indeed, because all our fundamental mathematical structures admit of such categorical characterizations, we thereby have reason to think of them as definite and real, and in this way categoricity seems to lead to mathematical realism. At the same time, categoricity seems also to implement structuralism, because the categorical accounts of our fundamental structures invariably do so only up to isomorphism, and so to regard every structure that fulfills the characterization as perfectly satisfactory is precisely to adopt the structuralist stance.


On the other hand, set theorists vigorously defend principles of set-theoretic reflection, asserting in various ways that every truth of the full set-theoretic universe reflects down to the same truth made in a set-sized structure. Reflection is often described as expressing a core feature of the set-theoretic universe, and indeed the \Levy-Montague reflection theorem is equivalent over a weak theory to the replacement axiom of \ZFC. Reflection ideas are used not only to justify the \ZFC\ axioms of set theory, but also the existence of large cardinals \cite{Reinhardt1974:RemarksOnReflectionPrinciplesLargeCardinalsAndElementaryEmbeddings, Maddy1988:BelievingTheAxiomsI}.

The puzzling conflict we aim to highlight between categoricity and reflection is that reflection is at heart an anti-categoricity principle---it asserts explicitly that no statement characterizes the set-theoretic universe $V$, because every statement true in $V$ is also true in a much smaller structure. The philosophical question to sort out here is how we can regard categoricity as vitally important in all our fundamental mathematical structures and yet simultaneously assert as a core principle that the set-theoretic universe itself is not categorical. Ultimately, there must be a fundamental mis-match between the extent of the reflection phenomenon and the complexity of any categorical characterization of the set-theoretic universe, since the kinds of statements and theories that reflect clearly cannot encompass the categorical characterization itself, which by the fact of categoricity does not reflect to any smaller structure.

The tension manifests also in attitudes toward large cardinals. Set theorists commonly defend a larger-is-better approach to large cardinals, pointing to the highly structured tower of consistency strength that they provide and the explanatory consequences down low of even the strongest large cardinal notions. Yet, as we have mentioned, categoricity for large cardinals is a smallness notion rather than a largeness notion. Theorem \ref{Theorem.Correctness-relations} shows that the categorical cardinals are all below the least $\undertilde\Sigma_2$-correct cardinal, and consequently below every strong cardinal, every supercompact cardinal, every extendible cardinal, every totally otherworldly cardinal and more. If we look upon categoricity as desireable in a foundational theory, therefore, we would seem to be pushed toward the low end of the large cardinal hierarchy, to the smallest large cardinals, to the set-theoretic universes satisfying categorical theories. For example, the theory $\ZFC_2+$``there are no inaccessible cardinals'' is categorical, as is $\ZFC_2+$``there are exactly $\omega^2+5$ inaccessible cardinals.'' But in the philosophy of set theory, one finds instead general arguments against such theories in the foundations of set theory---they are viewed as restrictive and limiting. Penelope Maddy \cite{Maddy1998:V=LAndMaximize}, for example, formulates the \emph{maximize} principle and uses it to explain set theorist's resistance to the axiom of constructibility and to large cardinal nonexistence axioms in general on the grounds that they are restrictive. According to \emph{maximize}, we should rather seek open-ended, nonlimiting axiomatizations of set theory. Even critics of Maddy's position, such as the first author in \cite{Hamkins2014:MultiverseOnVeqL}, retain an open-ended conception of set theory and do not push for categoricity in the set-theoretic universe.

Theorem \ref{Theorem.Correctness-relations} and the idea to which it leads, that categoricity is for small universes only, seem to suggest that we might not want or expect a categorical account of the full set-theoretic universe $V$, nor indeed even freshness for the theory of $V$. Perhaps this is the essence of reflection, that whatever is true in $V$, including perhaps the entire theory of $V$, has already been seen before many times along the way.

According to the \emph{toy model} perspective (described in \cite{Hamkins2012:TheSet-TheoreticalMultiverse}, \cite{Hamkins2014:MultiverseOnVeqL}), one studies the various set models of set theory and how they relate to one another partly in order to gain insight into the nature of the larger actual set-theoretic universe in the context of its multiverse including all its various forcing extensions. The toy models serve as a proxy for the real thing, which remains inaccessible to us---we look into the toy models to learn what might be true in $V$ or what we would desire to see in $V$. When we consider the toy models of $V_\kappa$ for inaccessible cardinals $\kappa$, we see that it is only the smallish large cardinals that realize a categorical theory, while the larger large cardinals do not, and so the toy model perspective together with the \emph{maximize} maxim seems to incline us to think that the full universe $V$ should not fulfill a categorical or even a fresh theory. On the toy model perspective, we might adopt noncategoricity and nonfreshness as a goal.

Similarly, one might be led by the philosophical reflection arguments to principles such as ``$\Ord$ is Mahlo,'' the scheme asserting that every first-order definable class club of ordinals contains a regular cardinal. It then follows directly from this principle that the universe is not first-order sententially categorical, since any statement true in $V$ would reflect to many inaccessible levels $V_\kappa$. Going beyond this, consider a set theorist with the universist perspective, who holds that there is a unique set-theoretic universe containing all sets, that they have a certain definite existence there, and that there is a definite nature for set-theoretic truth. In this case, we would expect a class truth predicate for first-order truth (and the existence of such a class is provable in Kelley-Morse set theory). If the reflection ideas underlying ``$\Ord$ is Mahlo'' encompassed definitions allowing this class as a parameter, then we would immediately get unboundedly many inaccessible reflecting cardinals $V_\kappa\elesub V$. Consequently, the universe would not satisfy a categorical theory. In this way, again, the philosophical reflection and definiteness arguments push one towards noncategoricity in set-theoretic truth.

Väänänen \cite[\S4.1]{Väänänen2012:Second-order-logic-or-set-theory} makes the point that no one categorical structure can interpret all the others, and in this sense no one categorical structure can serve as a foundation of mathematics, again pushing us to non-categorical foundations.

Because of these tensions between noncategoricity and the universe view in set theory, we claim that noncategoricity tends to undermine the idea of the set-theoretic universe as a unique fully completed set-theoretic realm. If the ultimate set theory is not categorical, after all, then whatever set-theoretic truths we might assert of the final set-theoretic universe will also be true in other distinct set-theoretic realms. Thus, the noncategoricity of the theory by its very nature leads one to a form of ontological pluralism for set theory. The same set-theoretic truths will be true elsewhere. Whatever it is that the universist believes to individuate the set-theoretic universe $V$ cannot be expressible as part of the theory of this set-theoretic realm.

As we see it, the philosophical problem here is to explain this transition in attitude toward categoricity. Why do we take categoricity as a fundamental value for smallish mathematical structures such as the natural numbers, the real numbers and so on, but not for the set-theoretic universe as a whole?

Let us try to offer a solution by describing a way out of the impasse, even though ultimately we shall take a different lesson from this analysis. What we claim is that if one takes second-order logic to have a fixed meaning, one where the metatheoretic concept of set obeys something at least like \ZFC, then the set-theoretic universe $V$ does in fact have a categorical characterization in second-order logic. Furthermore, the existence of this characterization ultimately places limits on the extent of reflection that is possible for the set-theoretic universe to exhibit.

Specifically, we claim, the class structure of the set-theoretic universe $\<V,\in>$ is characterized up to isomorphism in second-order logic as the unique well-founded extensional set-like relation realizing every subset of its domain. In more detail:
\begin{enumerate}
 \item The membership relation $\in$ is extensional.
         $$x=y\ \iff\ \forall z\, (z\in x\iff z\in y)$$
 \item The membership relation $\in$ is well founded.
        $$\forall A\,\bigl[\exists x\, Ax\implies\exists x\, \bigl(Ax\wedge\forall y\,(y\in x\implies \neg Ay)\bigr)\bigr]$$
 \item The membership relation $\in$ is set-like.
       $$\forall a\exists A\, \forall x\, \bigl(x\in a\iff Ax\bigr)$$
 \item Every subset of $V$ is realized.
         $$\forall A \exists a \forall x\, \bigl(x\in a\iff Ax\bigr)$$
\end{enumerate}
The lower-case quantifers $\forall x$ quantify over the objects $x$ of the domain $V$, while the upper-case quantifiers $\forall A$ are interpreted in second-order logic, ranging over all subsets of $V$. Note that $V$ itself will be a proper class in the metatheory, not a set, and so we emphasize that $\forall A$ means for all sub\emph{sets} $A\of V$, which will not include the proper class subclasses of $V$, such as $V$ itself or the class of ordinals of $V$. So this quantifier differs from the class quantifier used in \Godel-Bernays and Kelley-Morse set theory, since that quantifier ranges over all classes including proper classes, not just sets. Axiom (2) is the assertion that every nonempty subset $A\of V$ has an $\in$-minimal element. Axiom (3) asserts that the $\in$-members of any set in $V$ form a set---that is, a set in the metatheory, using the second-order concept of set. Axiom (4) asserts conversely that for every subset $A\of V$ there is an object $a$ in $V$ whose $\in$-elements are the same the elements of $A$. In this way, axioms (3) and (4) assert a kind of correspondence between the object theory and the metatheory as to what the sets are. Note also that axiom (4) makes a stronger claim than the second-order separation axiom $\textup{Separation}_2$, which asserts merely that every subset of a set already in $V$ is in $V$, whereas our axiom asserts that every subset of $V$ is in $V$. So this theory implies $\textup{Replacement}_2$ and consequently $\ZFC_2$ and more.

The characterization is entirely about the interplay of the metatheoretic and object-theoretic concepts of set, and it therefore relies critically on our having already fixed an interpretation of second-order logic. At bottom what the axiomatization asserts---perhaps disappointingly---is that the set-theoretic structure $\<V,\in>$ implements up to isomorphism exactly the wellfounded cumulative set hierarchy from the metatheory into the object theory. At bottom, it copies the metatheory to the object theory.

To begin the proof of categoricity, let us observe how the process of closing under all subsets---specified by axiom (4)---proceeds via the cumulative hierarchy. The axioms successively force certain kinds of objects into $V$ in a transfinite recursive process. Namely, we begin with nothing $V_0=\emptyset$, and at successor stages $V_{\alpha+1}$ has objects realizing every possible subset of $V_\alpha$; so it is in effect (isomorphic to) the full, actual power set of $V_\alpha$ as provided by the metatheoretic set concept interpreting second-order logic. At limit stages $\lambda$, we gather together everything we've added so far $V_\lambda=\Union_{\alpha<\lambda}V_\alpha$. The point is that if this is a set, then the closure process of axiom (4) asserts that $V_\lambda$ itself must be realized by an element of $V$, and so the closure process will continue to $V_{\lambda+1}$ and so on. Ultimately, we build the $V_\alpha$ hierarchy through all the ordinals of the metatheory and the resulting universe is $V=\Union_\alpha V_\alpha$. This structure fulfills axioms (1) and (2) by construction, since we assume the metatheoretic set concept is wellfounded and extensional; it fulfills axiom (3) because we only ever added objects that were sets in the metatheory; and it fulfills axiom (4) since every subset will be bounded in rank, since we presume that the metatheoretic set concept fulfills the replacement axiom.

To complete the proof of categoricity is now a simple induction on rank. Namely, if we have two structures $\<V,\in^V>$ and $\<W,\in^W>$ satisfying axioms (1), (2), (3) and (4), then we can construct an isomorphism of the corresponding levels $V_\alpha\iso W_\alpha$ of the cumulative hierarchy. Since the membership relations are both well founded, the two conceptions of ordinals and their stages in the cumulative hierarchy will both conform with the metatheoretic conception of ordinal. Both hierarchies begin with nothing, and  if we have an isomorphism at level $\alpha$, then it extends uniquely to an isomorphism at level $\alpha+1$, since any subset of $V_\alpha$ can be transferred by our partial isomorphism to a subset of $W_\alpha$ and vice versa. And the partial isomorphisms union to an isomorphism at the limit stages. It cannot be that one structure stops growing at a stage while the other continues, since the ordinal heights of the two stages are isomorphic and so one of them is a set in the metatheory if and only if the other also is. Finally, this cumulative recursive process must ultimately exhaust the objects of $V$ and $W$, since if there is an object $a$ in $V$ outside the $V_\alpha$ hierarchy, then its members form a set by axiom (3), and its members-of-members and so on, but this iterated collection (which forms a set in the metatheory since we assume \ZFC\ in the metatheory) cannot have an $\in^V$-minimal element $b$ outside the $V_\alpha$ hierarchy, contrary to axiom (2), since any such object $b$ would have all its $\in^V$-members inside the $V_\alpha$ hierarchy, which would place $b$ into the hierarchy at the supremum of those stages. Thus, the cumulative $V_\alpha$ and $W_\alpha$ hierarchies exhaust all of $V$ and $W$ and we therefore have an isomorphism of $\<V,\in^V>$ with $\<W,\in^W>$. So this is a categorical characterization. This categoricity argument is essentially similar to the central argument of \cite{Martin2001:MultipleUniversesOfSetsAndIndeterminateTruthValues}, which can be read as a categoricity thesis.

The key difference between our categorical account and the second-order set theory $\ZFC_2$ is that $\ZFC_2$ has only the second-order separation axiom rather than our axiom (4). The second-order separation axiom is insufficient to force the cumulative hierarchy to keep growing past inaccessible cardinals, since $V_\kappa$ for $\kappa$ inaccessible satisfies the second-order separation axiom. But such a structure $V_\kappa$ does not satisfy our axiom (4), since $\kappa$ and $V_\kappa$ are sets in the metatheory, and so must be added as elements, causing the hierarchy to keep growing. This key difference in the axioms is why Zermelo achieves only a quasi-categoricity result, making stops at every inaccessible cardinal, whereas we are able to achieve full categoricity, proceeding onward and upward to the full set-theoretic universe.

Note also the difference in kind between the categorical account we have provided above for the set-theoretic universe $V$ and the notion of categoricity used for categorical cardinals in the main definition. When defining the categorical cardinals, although we had used a second-order theory for the structure $V_\kappa$ in question, ultimately this amounts to a first-order notion in the set-theoretic universe $V$ in which the definition is considered, since we are in effect quantifying over $V_{\kappa+1}$, which is a set in $V$. The categorical account of $V$, in contrast, is not first-order in $V$, but second-order over $V$. For this reason, the notion of categoricity provided in the categorical account of $V$ is not subject to the conclusion of theorem \ref{Theorem.Correctness-relations}.

Does this categoricity proposal resolve the tension between categoricity and reflection? By providing a categorical account of the set-theoretic universe in second-order logic, it seems both to place categoricity as the more primary notion, and also to identify limitations on the possible extent of reflection. The reflection principle cannot rise fully to second-order logic, since one of the truths of the set-theoretic universe is that it is not a set, and this is a truth that cannot reflect to any actual set.

But is this categorical account of the set-theoretic universe satisfactory? Does it enable us to secure a definite meaning for the universe of sets and definite account for which sets there are? No, not really. Let us criticise it. As we have mentioned, the characterization proceeds essentially by copying the metatheoretic concept of set into the object-theoretic account. The categorical account is therefore only sensible when we have already fixed a meaning of second-order logic, when we have in effect fixed a set concept in the metatheory. And if we had used a different metatheoretic concept of set, for example, if a form of set-theoretic pluralism were the case, then the categorical characterization undertaken with that other concept of set would give rise to \emph{that} concept of set as the unique set-theoretic realm. For this reason, the categorical characterization by itself is without power to refute pluralism or to tell us which sets there really are. We cannot establish the definiteness of our set concept on the basis of the categoricity result without presuming the definiteness of the set concept arising from the interpretation of second-order logic in the metatheory. To attempt to do so would be circular reasoning that merely pushes off the problem from the object theory to the metatheory.

The problem is fundamentally similar to the objection we had raised earlier to Kreisel's observation that the continuum hypothesis is settled in second-order set theory. We objected that the argument is circular, because Kreisel's observation shows merely that the \CH\ is settled, provided that one has already fixed a complete concept of set to be used in the metatheoretic interpretation of second-order logic. But if there were a choice of such metatheoretic set concepts, then it wouldn't necessarily be settled the same way by all of them. This is the same circularity that seems to prevent us from using the categorical account of the set-theoretic universe to come to an understanding of which sets there are.

%
%

The main lesson we propose to take from the categorical characterization of the set-theoretic universe $\<V,\in>$ is that the obvious circularity and uselessness of it help to show how the other second-order characterizations that we have in mathematics are similarly inadequate for establishing definiteness. That is, the circularity objection applies just as much to Dedekind's axiomatization of arithmetic and to the characterization of the real numbers as the unique complete ordered field. The first author explains it like this:
\begin{quote}\small
Some philosophers object that we cannot identify or secure the definiteness of our fundamental mathematical structures by means of second-order categoricity characterizations. Rather, we only do so relative to a set-theoretic background, and these backgrounds are not absolute. The proposal is that we know what we mean by the structure of the natural numbers---it is a definite structure---because Dedekind arithmetic characterizes this structure uniquely up to isomorphism. The objection is that Dedekind arithmetic relies fundamentally on the concept of arbitrary collections of numbers, a concept that is itself less secure and definite than the natural-number concept with which we are concerned. If we had doubts about the definiteness of the natural numbers, how can we assuaged by an argument relying on the comparatively indefinite concept of ``arbitrary collection''? Which collections are there? The categoricity argument takes place in a set-theoretic realm, whose own definite nature would need to be established in order to use it to establish definiteness for the natural numbers. \cite[p.~32]{Hamkins2021:Lectures-on-the-philosophy-of-mathematics}
\end{quote}
Because the second-order accounts presume a set-theoretic realm of second-order logic, the definiteness of mathematical structure that they provide is only as definite as the set-theoretic account of the second-order logic itself.

Let us conclude the paper by summarizing the main theme and points. Zermelo's quasi-categoricity results lead naturally to the question of which extensions of $\ZFC_2$ might be categorical, and we have mounted a mathematical exploration of this. The fact that all such categorical extensions of $\ZFC_2$, and even the fresh extensions, are necessarily describing comparatively small large cardinals and that the truly large large cardinals are necessarily non-categorical and nonfresh highlights the fundamental tension between categoricity and reflection in set theory. The very fact that the largest large cardinal notions are noncategorical provides reason to favor non-categoricity in our ultimate account of the set-theoretic universe, a position further strengthened by the observation that one seems inevitably pushed out of any categorical foundation by the need also to account for the categorical truth predicate on that foundation and truth-about-truth and so on into higher realms.

\printbibliography

\end{document}